\documentclass[pdflatex,12pt,a4paper,twoside]{article}
\usepackage{authblk}
\usepackage{amsmath,amsfonts,amssymb,amsthm}
\usepackage{ifthen}
\usepackage{graphicx}
\usepackage{epsfig}
\usepackage{nicefrac}
\usepackage{mathrsfs}
\usepackage[a4paper,left=27mm,right=27mm,top=34mm,bottom=34mm]{geometry}
\sloppy
\usepackage{amsmath}
\usepackage{amsfonts}
\newcommand{\R}{\mathbb{R}}
\newcommand{\N}{\mathbb{N}}
\newcommand{\C}{\mathbb{C}}
\newcommand{\Z}{\mathbb{Z}}
\newcommand{\eps}{\varepsilon}

\newcommand{\ra}{\rangle}
\newcommand{\la}{\langle}
\newcommand{\textmean}{-\hspace{-0.85em}\int}
\newcommand{\del}{\partial}

\newcommand{\supp}{\mathrm{supp}}

\newcommand{\loc}{\mathrm{loc}}

\newcommand{\inc}{\mathrm{inc}}
\newcommand{\ver}{\mathrm{vert}}
\newcommand{\ev}{\mathrm{ev}}
\newcommand{\out}{\mathrm{out}}

\def\calF{\mathcal{F}}

\def\calL{\mathcal{L}}
\def\calM{\mathcal{M}}

\DeclareMathOperator*{\argmin}{\arg\!\min} 

\renewcommand{\Re}{\mathrm{Re}}
\renewcommand{\Im}{\mathrm{Im}}

\newcommand*{\medcap}{\mathbin{\scalebox{1.3}{\ensuremath{\cap}}}}


\def\Xint#1{\mathchoice
   {\XXint\displaystyle\textstyle{#1}}%
   {\XXint\textstyle\scriptstyle{#1}}%
   {\XXint\scriptstyle\scriptscriptstyle{#1}}%
   {\XXint\scriptscriptstyle\scriptscriptstyle{#1}}%
   \!\int}
\def\XXint#1#2#3{{\setbox0=\hbox{$#1{#2#3}{\int}$}
     \vcenter{\hbox{$#2#3$}}\kern-.5\wd0}}
\def\meanint{\Xint-}

\newtheorem{theorem}{Theorem}[section]
\newtheorem{definition}[theorem]{Definition}
\newtheorem{lemma}[theorem]{Lemma}
\newtheorem{proposition}[theorem]{Proposition}
\newtheorem{corollary}[theorem]{Corollary}
\newtheorem{assumption}[theorem]{Assumption}

\newtheorem{remark}[theorem]{Remark}

\newtheorem{problem}[theorem]{Problem}

\numberwithin{equation}{section}

\title{\bf\Large Outgoing wave conditions in photonic crystals and transmission
  properties at interfaces}

\author{A.\,Lamacz, B.\,Schweizer\thanks{Technische Universit\"at
    Dortmund, Fakult\"at f\"ur Mathematik, Vogelpothsweg 87, D-44227
    Dortmund, Germany.  \tt agnes.lamacz@tu-dortmund.de,
    ben.schweizer@tu-dortmund.de }}

\date{December 19, 2016}

\begin{document}

\maketitle

\begin{abstract}
  We analyze the propagation of waves in unbounded photonic crystals.
  Waves are described by a Helmholtz equation with $x$-dependent
  coefficients, the scattering problem must be completed with a
  radiation condition at infinity. We develop an outgoing wave
  condition with the help of a Bloch wave expansion. Our radiation
  condition admits a uniqueness result, formulated in terms of the
  Bloch measure of solutions.  We use the new radiation condition to
  analyze the transmission problem where, at fixed frequency, a wave
  hits the interface between free space and a photonic crystal. We
  show that the vertical wave number of the incident wave is a
  conserved quantity. Together with the frequency condition for the
  transmitted wave, this condition leads (for appropriate photonic
  crystals) to the effect of negative refraction at the interface.
\end{abstract}

\smallskip {\bf Keywords:} Helmholtz equation, radiation, waveguide,
Bloch analysis, outgoing wave condition, photonic crystal,
transmission problem, negative
refraction\\[0mm]

  \smallskip
  {\bf MSC:} 
  35Q60, 
  35P25, 
  35B27 

\pagestyle{myheadings} 
\thispagestyle{plain} 

\markboth{A.\,Lamacz, B.\,Schweizer}{Outgoing wave conditions and
  transmission at interfaces of photonic crystals}

\section{Introduction}

Photonic crystals are optical devices that allow to mold the
propagation properties of light. They usually have a periodic
structure and are operated with light at a fixed frequency $\omega$.
Due to their spectral properties (band gap structure), light of
certain frequencies can travel in the photonic crystal, but, at other
frequencies, the crystal is opaque.  A large body of literature is
available on this aspect of photonic crystals. Most contributions
study a periodic medium, possibly with a compactly supported
perturbation of the periodic structure. In contrast, we are interested
in the interface between a photonic crystal and free space.

An interesting effect of such an interface is negative refraction. A
recent discussion in the physical literature concerns the following
question: Is negative refraction always the result of a negative index
of the photonic crystal, or can negative refraction also occur at the
interface between air and a photonic crystal with positive index? Our
mathematical results confirm the latter: The conservation of the
transversal wave number can lead to negative refraction between two
materials with positive index, as suggested in \cite
{PhysRevB.65.201104}.

In mathematical terms, the light intensity is determined by the
Helmholtz equation
\begin{equation}
  \label{eq:Peps}
  -\nabla\cdot (a(x) \nabla u(x)) = \omega^2\,  u(x)\,,
\end{equation}
which must be solved for $u$ in a domain $\Omega$, $u = u(x)$, $x =
(x_1,x_2) \in \Omega$. Here, we restrict our analysis to an unbounded
rectangle $\Omega: = \R\times (0,h) \subset \R^2$, but note that our
methods can also be used in higher dimension, e.g.\,for $\Omega: =
\R\times (0,h_2)\times (0,h_3) \subset \R^3$. In \eqref {eq:Peps},
$\omega>0$ is a prescribed frequency and $a = a(x)$ is the inverse
permittivity of the medium. In an $x_3$-independent geometry and with
polarized light, the time-harmonic Maxwell's equations reduce to
\eqref {eq:Peps} and $u = u(x)$ is the out-of-plane component of the
magnetic field.

The coefficient $a = a(x)$ describes the medium. We assume that the
right half space $\{ x= (x_1,x_2) \in \Omega | x_1 >0\}$ is occupied
by a periodic photonic crystal with periodicity length $\eps>0$.
Using the unit cube $Y = (0,1)^2$ and the scaled cube $Y_\eps = \eps Y
= (0,\eps)^2$, we therefore assume that the coefficient $a = a^\eps$
is $Y_\eps$-periodic for $x_1>0$. We make the assumption that an
integer number $K$ of cells fits vertically in the domain, i.e.\,$K =
h/\eps \in \N$.  On the left half space $\{ x= (x_1,x_2) \in \Omega |
x_1 <0\}$, we set $a = a^\eps \equiv 1$.  With $a = a^\eps$ and
$\omega$ given, problem \eqref {eq:Peps} is an equation for $u$, but
it must be accompanied by boundary conditions.


We impose periodic boundary conditions in the vertical direction,
i.e.\,we identify the lower boundary $\{ x = (x_1,x_2) | x_2 = 0 \}$
with the upper boundary $\{ x = (x_1,x_2) | x_2 = h \}$. In order to
analyze scattering properties of the interface, we assume that the
interface is lit by a planar wave. We consider, for a fixed
wave-vector $k\in \R^2$, the incident wave
\begin{equation}
  \label{eq:u-inc}
  U_\inc(x) = e^{2\pi i k\cdot x/\eps}\,.
\end{equation}
To guarantee that $U_\inc$ is a solution to \eqref {eq:Peps} on the
left, we assume $\omega^2 = 4\pi^2 |k|^2 / \eps^2$.  Since the
Helmholtz-equation models a time-harmonic situation, we should think
here of a solution of the wave equation in the form $\hat U_\inc(x,t)
= U_\inc(x) e^{-i\omega t} = \exp(i [2\pi k\cdot x/\eps - \omega
t])$. We always consider $k_1 > 0$ such that $U_\inc$ represents a
right-going wave. In addition, we assume that the incident wave
respects the periodicity condition in vertical direction,
i.e.\,$e^{2\pi i k_2 h/\eps} = 1$ or, equivalently, $k_2 h \in \eps
\Z$.

With the incident wave $U_\inc$ at hand we can now describe -- at
least formally -- the boundary conditions for solutions $u$ of \eqref
{eq:Peps} as $x_1\to \pm\infty$. We seek for $u$ such that (i) $u$
satisfies an outgoing wave condition as $x_1\to \infty$ and (ii) $u -
U_\inc$ satisfies an outgoing wave condition for $x_1\to-\infty$. This
leads us to our first question:
\begin{quote}
  {\bf Question 1:} How can we prescribe radiation conditions in
  periodic media?
\end{quote}
The answer to Question 1 is intricate and requires a detailed
study. We will use Bloch expansions and Bloch projections to formulate
our new outgoing wave condition in Definition
\ref{def:outgoingwave}. In order to motivate our choice, we sketch
some background in the next subsection.

\begin{figure}[ht]
  \begin{center}
    \includegraphics[width=0.8\textwidth]{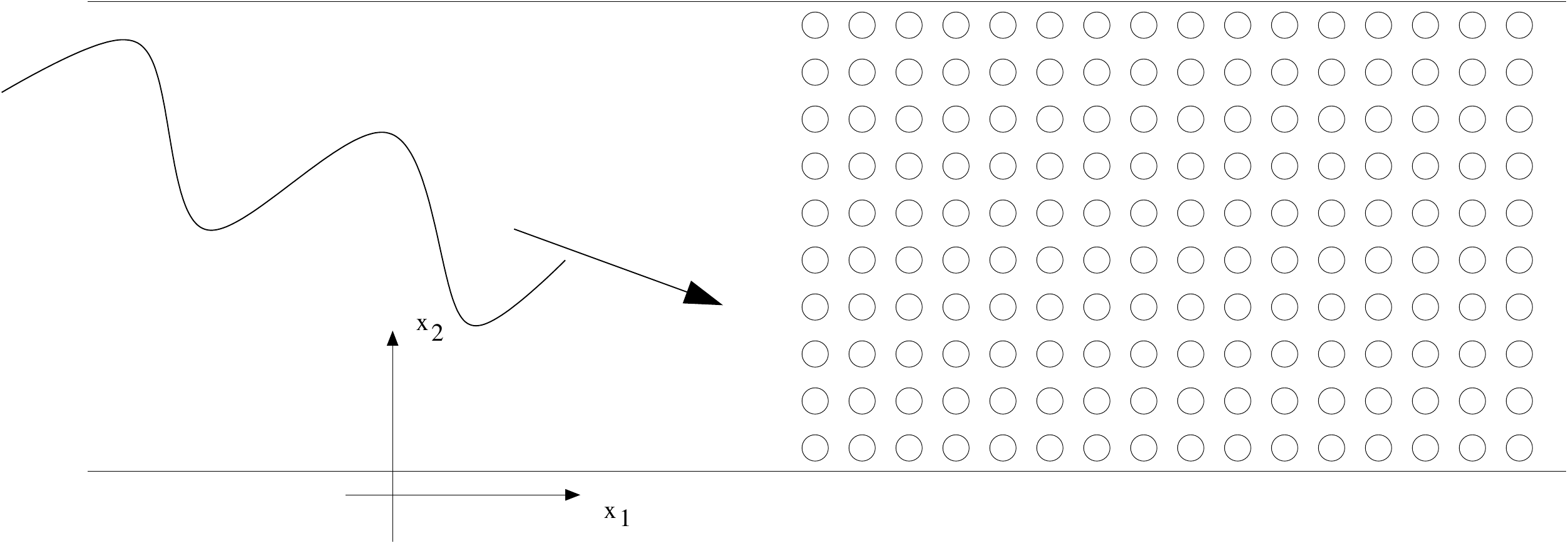}
    \caption{The geometry of the transmission problem for $K = 10$
      (number of cells in vertical direction). An incoming wave hits
      the boundary of a photonic crystal. We are interested in the
      waves that are  generated in the photonic crystal. }
      \label{F:Geometry}
  \end{center}
\end{figure}

\medskip Once we have a precise formulation of the scattering problem,
we can turn to the application: What can be said about the
transmission problem? When an incident wave $U_\inc$ lights the
interface, it creates waves inside the photonic crystal. These waves
are described by $u$ on $\{x_1>0\}$, our aim is to characterize $u$.
Performing a Bloch expansion, we write $u$ as a superposition of Bloch
waves. In this superposition, we expect that there appear only waves
that satisfy two requirements: (a) the Bloch frequency coincides with
the frequency $\omega$. (b) the vertical wave number of the Bloch wave
is $k_2$ (``conservation of the vertical wave number'').
\begin{quote}
  {\bf Question 2:} Let $u$ be the solution of the transmission
  problem for the incoming wave $U_\inc$. Does the Bloch expansion of
  $u$ on the right respect the frequency condition and the
  conservation of the vertical wave number?
\end{quote}

A positive answer to Question 2 provides information on the negative
refraction phenomenon. The requirements (a) and (b) are used in \cite
{PhysRevB.65.201104} to explain negative refraction without referring
to a negative index material: Denoting the $m$-th Bloch eigenvalue for
the wave-vector $j\in Z := [0,1]^2$ as $\mu_m(j)$, the photonic
crystal can have the property that the three conditions (a) $\mu_0(j)
= \omega^2$, (b) $j_2 = k_2$, and the additional condition (c)
$e_1\cdot \nabla_j\mu_0(j) > 0$ (the group velocity has a positive
$x_1$-component), determine $j$ uniquely. For an appropriately chosen
field $a$, an appropriate frequency $\omega$ and an appropriate
incoming wave vector $k$, we have the following situation: $e_2\cdot k
= k_2$ is negative, but the solution $j$ satisfies $e_2\cdot \nabla_j
\mu_0(j) > 0$.  This means that a light beam that hits the interface
from above ($k_2$ negative in free space implies that the vertical
group velocity is negative) produces a light beam in the photonic
crystal that is directed towards the top (vertical group velocity is
positive, $e_2\cdot \nabla_j \mu_0(j) > 0$). With this mechanism, the
conditions (a)--(b) can lead to negative refraction. This is outlined
in \cite {PhysRevB.65.201104}, where a specific photonic crystal is
described and the negative refraction effect is supported by numerical
results.  We note that a quite different interpretation is given in
\cite {EfrosPokrovsky-SolidState-2004}.

\smallskip We will answer the above Questions 1 and 2. The precise
answers are more complex than one might expect at first sight (we
sketch some of the principal difficulties in the next two
subsections). We show that our outgoing wave condition of Definition
\ref{def:outgoingwave} is reasonable by proving a uniqueness result:
Theorem \ref {thm:uniqueness} yields, in a weak sense, the uniqueness
of solutions in terms of the Bloch measure. Question 2 is answered
with Theorem \ref {thm:VertWaveNumber}: If $u$ is a solution that
satisfies the outgoing wave condition, then the corresponding Bloch
measure is concentrated in those frequencies that respect (a)--(b).
The mathematical description of our results is given in Section \ref
{ssec.main}.

\subsection{Outgoing wave conditions}

Although we use our results to analyze negative refraction, the core
of our mathematical theory is more general: We develop an outgoing
wave condition for the Helmholtz equation in a periodic medium.  In
this section, we sketch some background concerning radiation
conditions, mainly in free space.  Our aim is to demonstrate the
importance of radiation conditions, to show the intimate link between
radiation conditions and uniqueness results, and to motivate our
mathematical approach.
 
The Helmholtz equation \eqref {eq:Peps} has been studied already by
Euler and Lagrange, but Helmholtz was the first who expressed
solutions in bounded domains with a representation formula
\cite{Helmholtz-1860}. In unbounded domains, one faces the problem of
boundary conditions at infinity. We recall that two fundamental
solutions of the Helmholtz equation for $x\in\R^3$ are given by
\begin{equation}
  \label{eq:fund-soln-R3}
  u_\out(x) := \frac{1}{|x|} e^{i\omega |x|}\quad\text{ and }\quad
  u_\inc(x) := \frac{1}{|x|} e^{-i\omega |x|}\,.
\end{equation}
With the time-dependence $e^{-i\omega t}$, the solution $u_\out$
represents an outgoing wave, $u_\inc$ an incoming wave. Outgoing waves
are expected to be the building stones of solutions of scattering
problems, incoming waves should not be present in the expansion of
solutions.

Sommerfeld introduced in \cite{Sommerfeld-1912} for dimension $n=3$ a
radiation condition; until today, it is the standard outgoing wave
condition in free space and is named after him:
\begin{equation}
  \label{eq:Sommerfeld}
  |x|^{(n-1)/2} (\del_{|x|} u - i\omega u)(x) \to
  0\quad \text{  as } |x|\to \infty\,.
\end{equation}
The solution $u_\out$ satisfies \eqref {eq:Sommerfeld} and is
therefore admissible, $u_\inc$ does not satisfy \eqref {eq:Sommerfeld}
and is not accepted as a solution.  Sommerfeld justified his radiation
condition with a uniqueness proof: Prescribing boundary data on an
obstacle (the scatterer) and the radiation condition \eqref
{eq:Sommerfeld} at infinity, the Helmholtz equation has at most one
solution. Actually, Sommerfeld demanded two further properties to
guarantee uniqueness, but the results of Rellich (today known as
``Rellich Lemma'') showed that the additional assumptions are not
necessary \cite{Rellich-1943}, see \cite{Schot-Sommerfeld-1992} for
the historical background.

For two reasons, we cannot use the Sommerfeld radiation condition. The
first is that we consider $x$-dependent coefficients $a$. The interest
in $x$-dependent coefficients is not new: Sommerfeld himself studied
the case that $a$ takes two different values in two disjoint
half-spaces, J\"ager studied in \cite{Jaeger-1967} coefficients $a$
that stabilize to constant coefficients for $|x|\to \infty$. Our
situation is different, since $a$ is periodic in the right half plane.
The second reason is that we study a waveguide such that, in the above
sense, our situation is neither one- nor two-dimensional.  For
constant coefficients, the elementary solution in a strip $\R\times
(0,1)$ is $e^{i \kappa\cdot x}$ with $\kappa = (\kappa_1, \kappa_2)$
and $|\kappa|^2 = \omega$, which is right-going for $\kappa_1>0$ and
left-going for $\kappa_1<0$. The solution has no decay (similar to the
one-dimensional case), but the expression $\del_{x_1} u - i\omega u$
does not vanish for right-going waves due to the presence of
$\kappa_2$.

The idea of our outgoing wave condition is simple: Every function on a
rectangular domain can be expanded in Bloch waves. We demand that the
expansion of the solution contains only outgoing waves. The precise
form of the outgoing wave condition is slightly more technical since
we have to consider restrictions of the solution $u$ to large
rectangles (in order to have a small contribution of non-periodicity
effects).

We emphasize that, even though the conditions become more technical,
we follow the historical pathway: The expression in \eqref
{eq:Sommerfeld} can be understood as a projection of the solution $u$
onto incoming waves. Outgoing waves are filtered out and it is
demanded that the remainder is small for large radii.  Our outgoing
wave condition \eqref {eq:outgoingright} is: At the far right, the
solution $u$ can be expanded in a Bloch series that contains only
right-going waves. With this requirement, we follow once more
Sommerfeld who writes in \cite{Sommerfeld-1912}: ``at infinity $u$
must be representable as a sum (or integral) of waves of the divergent
traveling type.'' For an extensive study of homogeneous media we refer
to \cite {ColtonKress}.

\subsubsection*{On radiation in waveguides and  photonic crystals}

An important contribution is the recent work of Fliss and Joly \cite
{FlissJoly2016}, which is also concerned with outgoing wave conditions
and the existence and the uniqueness of solutions for periodic
wave-guides. Essentially, the outgoing wave condition of \cite
{FlissJoly2016} (for $x_1\to +\infty$) reads
\begin{equation}
  \label{eq:FlissJolyCond}
  u(x) = \sum_{\lambda\in N(\omega)} \alpha^+_\lambda U^+_\lambda(x) +
  w^+(x)\,,
\end{equation}
where $N(\omega)$ is a finite index set, $\alpha^+_\lambda$ are real
coefficients, $U^+_\lambda$ are right-going Bloch waves and $w^+(x)$
is exponentially small for $x_1\to +\infty$.  The setting of the
problem differs in one important point from ours: \cite
{FlissJoly2016} studies a medium which is identical at the far left
and at the far right, which allows to use global Floquet-Bloch
transformations; this is not possible in our setting. Below we give a
more detailed comparison of our results to those of \cite
{FlissJoly2016}.

Another radiation condition in a waveguide with varying index in
transversal direction is studied in
\cite{Bonnet-Ben-etal-SIAP2009}. The ``modal radiation condition'',
formulated in Definition 2.4 of \cite{Bonnet-Ben-etal-SIAP2009},
demands for solutions $u$ of the Helmholtz equation that
\begin{equation*}
  (\calF u(x,.))(\lambda) = \hat\alpha^\pm_\lambda e^{-\sqrt{\lambda} |x|}
  \qquad \text{ for } \pm x> a
\end{equation*}
holds for every $\lambda$. Here, $\calF$ denotes a generalized Fourier
transform and $x$ is the longitudinal independent variable. As in our
radiation condition, it is demanded that only outgoing waves
($e^{-\sqrt{\lambda} |x|}$ instead of $e^{+\sqrt{\lambda} |x|}$) are
present.  The new feature of our approach is that it covers media with
oscillations also in longitudinal direction.  We cannot use methods
that rely on separation of variables and the Fourier transform must be
replaced by a Bloch transform.

Also in \cite{Ammari-Antenna-2001}, the radiation of waves inside a
photonic crystal is investigated, and the setting also uses an
interface between a photonic crystal and free space. The fundamental
difference to our work is that in \cite {Ammari-Antenna-2001} the
underlying frequency $\omega$ is assumed to lie in a band-gap of the
photonic crystal. For this reason, waves in \cite
{Ammari-Antenna-2001} are found to decay exponentially in the photonic
crystal and no explicit radiation condition must be formulated.  We
mention \cite {Nazarov2014} and the references therein for other
approaches to radiation conditions, also based on Poynting vectors and
incoming and outgoing waves.

For numerical calculations, one is interested in replacing the
unbounded domain by a bounded domain. In this case, one asks for
appropriate boundary conditions that must be imposed on the boundary
of the bounded domain. This point of view leads to the construction of
Dirichlet-to-Neumann maps or similar ideas \cite{Fliss-2013,
  Joly-Fliss-2012, FlissKlindworthSchmidt2015}. Other key-words are
perfectly matched layers \cite{Joly-PML} or transparent boundary
conditions. These approaches 
may be an alternative to the outgoing wave condition that we
suggest here. However, we are not aware of any result for such 
boundary conditions that implies our uniqueness theorem.

\subsection{Uniqueness and negative refraction}

Following Sommerfeld's example, we accompany our outgoing wave
condition in photonic crystals with a uniqueness statement.  With this
result, we can treat the application on negative refraction. Several
problems must be tackled in this process and the uniqueness result is,
unfortunately, not as strong and simple as one would like it to be.

It is an essential feature of the Helmholtz equation that, even
without source terms and with homogeneous boundary conditions,
solutions may be nontrivial. One example is the bounded domain $\Omega
= (0,1)\subset \R^1$ with the solution $u(x) = \sin(\pi x)$ for
$\omega = \pi$. A more relevant example in higher dimension ($2$ or
$3$) is the Helmholtz resonator: When $\omega$ coincides with the
resonance frequency, there is a nontrivial solution to homogeneous
boundary conditions, see \cite{
  Schweizer-Helmholtz-resonator}.  For regular exterior domains, the
Sommerfeld condition implies uniqueness: The Helmholtz operator has
only a continuous spectrum and no point spectrum. We emphasize that
this is true only for the Helmholtz equation with constant
coefficients.

In our case of non-constant and (looking globally) non-periodic
coefficients, there can be nontrivial solutions to the homogeneous
Helmholtz equation (satisfying also a radiation condition). In
general, such solutions can be localized modes or waves that travel
vertically. An example for the first (corresponding to a point
spectrum of the operator) are standing waves in a photonic crystals
with a point defect, compare e.g.\,\cite{PhotonicCrystals-book},
Chapter 5. In the case of a line defect (or in our situation of an
interface between free space and photonic crystal), one expects
nontrivial solutions travelling along the interface, see
e.g.\,\cite{PhysRevB.44.10961, PhysRevB.69.121402}.  Concerning the
mathematical analysis of defects in a photonic crystal and the
possibility that they support modes (and hence act as a waveguide) see
\cite{AmmariSantosa2004, FigotinKlein1998}. In a vertically periodic
setting it was shown in \cite{HoangRadosz-2014} that a line defect
cannot support bounded modes.

The strong uniqueness result of \cite {FlissJoly2016} is fitting in
this background: In a situation where the surrounding medium is
perfectly periodic, the radiation condition of \cite {FlissJoly2016}
implies uniqueness for non-singular frequencies.  Vertical waves in
the crystal are excluded by the non-singularity assumption, waves
along a line defect and localized waves are excluded by the absence of
defects.

Instead, our uniqueness result for the transmission problem must deal
with the fact that the interface can support nontrivial
solutions. Furthermore, we want to admit also singular frequencies
(allowing for vertical waves in the crystal).  Our uniqueness result
states: Imposing the new radiation condition in photonic crystals, for
non-singular frequencies, every homogeneous solution has a vanishing
Bloch measure.  Loosely speaking: the solution vanishes far away from
the interface.  For general frequencies, the radiating solution may
contain vertical waves. See Theorem \ref {thm:uniqueness} for both
results.

\smallskip A more technical problem will accompany us along the way to
a radiation condition and to the uniqueness result: The geometry is
not globally periodic and the solution $u$ is not periodic (and $u$
is, in general, not periodic on any rectangle in the right half
plane).  For this reason, neither a Bloch transformation of $u$ nor a
periodic Bloch expansion of $u$ are meaningful as such. We will have
to truncate $u$ on large squares at the far right and consider the
Bloch expansion of the result. We must use large squares in order to
achieve that the truncation process introduces only small errors.

Bloch measures (as used e.g.\,in \cite{Allaire-Conca-1998},
pp. 182-183) are the appropriate tool for the limit analysis, which is
necessary for the following reason: A periodic Bloch expansion uses a
discrete set of frequences $j$. In general, not even the elementary
frequency condition $\mu_0(j) = \omega^2$ (the Bloch wave frequency
coincides with the frequency of the Helmholtz equation) can be
satisfied in a discrete set of frequencies $j$. For this reason, we
cannot expect that the Bloch expansion of $u$ (at a finite distance)
satisfies conditions such as $\mu_0(j) = \omega^2$. Instead, we must
introduce a limiting object (the Bloch measure). Our aim is to derive
properties of this limiting object.

Regarding other mathematical approaches to related problems, we
mention \cite{MR2533955, MR2847530}, where the authors investigate
diffraction effects in time-dependent equations.  In
\cite{Allaire-Conca-1998}, the spectrum of an elliptic operator in a
periodic medium is investigated. We use some methods of
\cite{Allaire-Conca-1998}, in particular in the pre-Bloch
expansion. Moreover, the above mentioned problem of waves that are
concentrated at the interface of the photonic crystal has a
counterpart in \cite{Allaire-Conca-1998}: The part of the spectrum
that is related to the boundary layer cannot be characterized
explicitly (in the sense of \cite{Allaire-Conca-1998}, where the
sequence $\eps_i\to 0$ is fixed, and in contrast to \cite
{CastroZuazua1996}, where the sequence $\eps_i\to 0$ is chosen
appropriately).

\smallskip We close this section with more references to negative
refraction effects. Negative refraction can be a consequence of a
negative index material, see \cite{Pendry2000} for the effect and
\cite{BFe2, BouchitteSchweizer-Max, ChenLipton2010,
  Lamacz-Schweizer-Max, Lamacz-Schweizer-Neg} for rigorous results,
obtained with the tools of homogenization theory.  In \cite
{EfrosPokrovsky-SolidState-2004, Pokrovsky2003333}, the negative
refraction effect is explained in the spirit of negative index
materials.  But negative refraction can also occur without a negative
index material, see \cite {PhysRevB.65.201104}. We note that the
photonic crystals in \cite {EfrosPokrovsky-SolidState-2004} and in
\cite {PhysRevB.65.201104} are identical and that they do not have a
negative effective index in the sense of homogenization. With the work
at hand we support the line of argument of \cite
{PhysRevB.65.201104}. 

\subsection{Main results}
\label{ssec.main}

Throughout this article we consider the following parameters as fixed:
The frequency $\omega>0$, the height $h>0$ of the waveguide, the
periodicity length $\eps>0$ with $K = h/\eps \in \N$, and the wave
number $k\in \R^2$ of the incident wave with $k_2 h\in \eps\Z$,
$k_1>0$ and $4\pi^2 |k|^2/\eps^2 = \omega^2$. The underlying domain is
$\Omega: = \R\times (0,h)$ and the coefficient field is $a = a^\eps:
\Omega\to \R$. We assume $0<a_*\le a(x) \le a^*<\infty$ $\forall x\in
\Omega$ and $a\equiv 1$ on $\{x_1< 0\}$, but the latter assumption is
not essential.  We demand $a\in C^1$ with $\eps$-periodicity with
respect to $x_1$ and $x_2$ on $\{x_1 > 0\}$.

We use Bloch expansions of the solution. Let us give a description of
our results, where the
superscript ``$\pm$'' indicates that we study $\pm x_1 > 0$. The Bloch
expansion uses two indices, $m\in \N_0 = \{0,1,2,...\}$ numbers the
eigenfunctions in the periodicity cell and the Bloch number $j\in Z:=
[0,1]^2$ measures the phase shift along one periodicity cell. We
collect the two indices in one index as $\lambda := (j,m)\in I:=
Z\times \N_0$. To every $\lambda\in I$ we associate a Poynting number
$P^\pm_\lambda\in \R$, see \eqref {eq:Poynting-def}. For the Bloch wave $U^\pm_\lambda$
with index $\lambda$, the number $P^\pm_\lambda$ is a measure for the
flux of energy in positive $x_1$-direction. 

We introduce the outgoing wave condition (on the right)
\begin{equation}
   \label{eq:outgoingright-intro}
   \meanint_{RY_\eps}\left|\Pi^+_{<0}(u^+_R)\right|^2\to 0\quad
   \text{ as }\quad  R\to \infty\,.
\end{equation}
Here $u^+_R$ is, up to periodic extensions, the function
$u^+_R(x_1,x_2) = u(R\eps + x_1, x_2)$.  The map $\Pi^+_{<0}$ is a
projection onto those Bloch waves that correspond to an energy flux to
the left (i.e. incoming waves, $P^+_\lambda < 0$). The precise
description is given in Definition \ref {def:outgoingwave}.  The
outgoing condition on the left is analogous; Bloch waves that
correspond to an energy flux to the right (i.e. incoming waves,
$P^-_\lambda>0$) are excluded on the far left.

Our results are formulated with the help of index sets.  Waves with
vertical energy flux (or no energy flux) correspond to $\lambda\in I
:=Z\times \N_0$ in
\begin{equation*}
   I^\pm_{=0} := \left\{\lambda\in I\, |\, P^\pm_{\lambda} = 0 \right\}\,,
\end{equation*}
and, for a given $m\in\N_0$, to $j\in Z = [0,1]^2$ in the index set
\begin{equation*}
   J^\pm_{=0,m} := \left\{j\in Z\, |\, P^\pm_{(j,m)} = 0 \right\}
   = \left\{j\in Z\, |\, (j,m)\in I^\pm_{=0} \right\}\,.
\end{equation*}

The statements below are meaningful for general frequencies
$\omega>0$.  Unfortunately, we are only able to prove theorems for
moderate frequencies, as expressed in the following assumption. It
demands that the frequency of the wave is below the energy band
corresponding to the index $m=1$ (below the second band).
\begin{assumption}[Smallness of the frequency]
   \label{ass:smallnessfrequency}
   We assume on the coefficient $a$ and the frequency $\omega$ that
   \begin{equation}
     \label{eq:smallnessfrequency}
     \omega^2 < \inf_{ j\in Z,\, m\geq 1} \, \mu^+_m(j)\,,
   \end{equation}
   and $\omega^2 < \inf_{ j\in Z,\, m\geq 1} \, \mu^-_m(j)$, where
   $\mu^\pm_m(j)$ are the Bloch-eigenvalues.
\end{assumption}

Our results concern solutions $u$ of the transmission problem,
specified as follows.

\begin{problem}[Transmission problem]
  \label{prob:Transmission}
  We say that $u\in H^1_\loc(\Omega)$ solves the scattering problem if
  it satisfies the Helmholtz equation \eqref {eq:Peps} in $\Omega =
  \R\times (0,h)$ with $h = K \eps$ and periodic boundary conditions
  in the $x_2$-variable.  We furthermore assume that it is generated
  by the incoming wave $U_\inc$ of \eqref{eq:u-inc} in the following
  sense: $u$ satisfies the outgoing wave condition \eqref
  {eq:outgoingright} on the right and the difference $u - U_\inc$
  satisfies the outgoing wave condition \eqref {eq:outgoingleft} on
  the left.
\end{problem}

Our uniqueness result characterizes the Bloch measures
$\nu^\pm_{l,\infty}$ of a difference of two solutions (the Bloch
measures are introduced in Definition \ref
{def:limitingBlochmeasure}). The theorem below yields that, for large
values of $|x_1|$, the difference of two solutions does not contain
Bloch waves with an eigenvalue index larger than $0$. Furthermore,
only those waves can appear that satisfy all of the following three
requirements: They correspond to the imposed frequency $\omega$, they
correspond to vertically periodic waves, they transport energy in
vertical direction.

\begin{theorem}[Uniqueness]\label{thm:uniqueness}
  Let Assumption \ref {ass:smallnessfrequency} on the frequency
  $\omega$ be satisfied. For the incoming wave $U_\inc$ with wave
  vector $k$, let $u$ and $\tilde u$ be two solutions of the
  transmission Problem \ref{prob:Transmission}.  For $l\in \N_0$, let
  $\nu^\pm_{l,\infty}$ be the Bloch measures that are generated by the
  difference $v := u-\tilde u$. Then:
   \begin{align}
     \label{eq:Bloch-measure-difference-1}
     \nu^\pm_{l,\infty} &= 0 \quad \text{for }\, l\ge 1\,,\\
     \supp{(\nu^\pm_{0,\infty})} &\subset \left\{ j\in Z\, |\,
       \mu_0^\pm(j) = \omega^2\,,\,
       j_2\in \Z/K  \right\} \cap J^\pm_{=0,0}\,.
   \end{align}
\end{theorem}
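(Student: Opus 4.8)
The plan is to pass to the difference $v := u - \tilde u$ and to exploit that $v$ is a \emph{source-free} solution which radiates outward on \emph{both} ends; this extra symmetry is the decisive gain over a single scattering solution. First, by linearity $v$ solves the homogeneous equation \eqref{eq:Peps} in $\Omega$ with the periodic boundary conditions in $x_2$. On the right, both $u$ and $\tilde u$ satisfy \eqref{eq:outgoingright}, and since that condition is linear (it is the vanishing of a projection onto left-going waves), so does $v$. On the left, $u - U_\inc$ and $\tilde u - U_\inc$ both satisfy \eqref{eq:outgoingleft}; their difference is exactly $v = (u-U_\inc) - (\tilde u - U_\inc)$, so $v$ satisfies the outgoing condition on the left as well. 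Thus the incident wave has been eliminated and $v$ is purely outgoing on both sides.

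The vanishing of the higher modes in \eqref{eq:Bloch-measure-difference-1} is then immediate from the frequency condition. By the analysis underlying Theorem \ref{thm:SupportBlochmeasure}, the support of any Bloch measure generated by an $H^1_\loc$-solution of \eqref{eq:Peps} is constrained to the dispersion set $\{\mu^\pm_l(j) = \omega^2\}$. Under Assumption \ref{ass:smallnessfrequency} one has $\omega^2 < \mu^\pm_m(j)$ for every $j\in Z$ and every $m\ge 1$, so this set is empty for $l\ge 1$; hence $\nu^\pm_{l,\infty} = 0$ for all $l\ge 1$, and only the zeroth band $\nu^\pm_{0,\infty}$ survives.

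It remains to localize $\nu^\pm_{0,\infty}$ on the vertical-flux sets $J^\pm_{=0,0}$, and here the key is a conserved-flux argument. Since $a$ is real, testing \eqref{eq:Peps} for $v$ with $\bar v$ and taking imaginary parts gives $\div\big(a\,\Im(\bar v\,\nabla v)\big) = 0$. Integrating this divergence-free current over a vertical cross-section and using the $x_2$-periodicity (so that the lateral contributions cancel), the longitudinal flux
\begin{equation*}
  F := \int_0^h a(x)\,\Im\big(\bar v\,\del_{x_1} v\big)\,dx_2
\end{equation*}
is independent of $x_1$. The outgoing condition on the right forbids left-going waves, so the far-right flux is a superposition of contributions with Poynting number $P^+_\lambda \ge 0$ and hence $F\ge 0$; the outgoing condition on the left forbids waves returning into the crystal, so the far-left flux collects contributions with $P^-_\lambda \le 0$ and hence $F\le 0$. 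Being constant in $x_1$, the flux therefore vanishes, $F = 0$.

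Finally, I would represent the limiting flux on the far right through the Bloch measure. As only the band $l=0$ survives, the far-right flux equals $\int_Z P^+_{(j,0)}\,d\nu^+_{0,\infty}(j)$; since this integral equals $F=0$ while the integrand is $P^+_{(j,0)}\ge 0$ on the support of the nonnegative measure $\nu^+_{0,\infty}$, the measure can charge only the zero set $\{P^+_{(j,0)}=0\} = J^+_{=0,0}$, which is the claimed bound. The argument on the left is identical with $P^-$ in place of $P^+$. The main obstacle will be exactly this last identification of $F$ with the Bloch-measure flux: because $v$ is not periodic, one must truncate it on large squares at the far right before a Bloch expansion is available (Definition \ref{def:limitingBlochmeasure}) and then pass to the limit, showing that the flux functional $F$ is compatible with the weak convergence defining the measure and that the truncation errors vanish uniformly. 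Once this limit passage is secured, the sign argument above closes the proof.
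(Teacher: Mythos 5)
Your proposal is correct and takes essentially the same route as the paper: reduce to the source-free difference $v$ (outgoing on both sides), remove the bands $l\ge 1$ via Assumption \ref{ass:smallnessfrequency} (the paper's Lemma \ref{lem:only-m=0}), and combine energy-flux conservation --- your constant flux $F$, the paper's identity \eqref{eq:testing2} --- with the two outgoing wave conditions and the signs of $P^\pm_\lambda$ to push the surviving band-$0$ mass onto $J^\pm_{=0,0}$ (the paper's Propositions \ref{prop:weightedsumalpha} and \ref{prop:uniqueness}). The limit passage you flag as the main obstacle is exactly what the paper's Lemmas \ref{lem:almost-psi}, \ref{lem:Caccioppoli}, \ref{lem:RegularityProjectionsev} and \ref{lem:orthogonalwavenumber} supply; note in particular that the flux form $b^\pm_R$ is orthogonal only across distinct wave numbers $j$, not across distinct band indices $m$, which is precisely why the frequency assumption must be invoked (with gradient bounds) to eliminate same-$j$ cross terms before your diagonalization of the flux and the sign argument can be applied.
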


An immediate consequence of Theorem \ref {thm:uniqueness} is the
following result for frequencies $\omega$ that do not support vertical
waves.

\begin{corollary}[Uniqueness for non-singular
   frequencies]\label{cor:uniqueness}
   Let the situation be as in Theorem \ref {thm:uniqueness} and let
   the frequency $\omega$ be non-singular in the sense that $\left\{
     j\in Z |\, \mu_0^\pm(j) = \omega^2\,,\, j_2\in \Z/K \right\} \cap
   J^\pm_{=0,0} = \emptyset$. Then the difference $v := u-\tilde u$ of
   two solutions of the transmission Problem \ref{prob:Transmission}
   has a vanishing Bloch measure.
\end{corollary}

Our second main result shows that the transmission of an incoming
wave occurs in such a way that two quantities are conserved: The
vertical wave number and the energy.

\begin{theorem}[Transmission conditions]
   \label{thm:VertWaveNumber}
   Let Assumption \ref {ass:smallnessfrequency} be satisfied, let $k$
   be the wave vector of the incoming wave $U_\inc$. Let $u$ be a
   solution of the transmission problem \ref{prob:Transmission} and
   let $\nu^\pm_{l,\infty}$, with $l\in\N_0$, be the Bloch measures
   that are generated by $u$. Then $\nu^\pm_{l,\infty} = 0$ for $l\ge
   1$ and
   \begin{equation}
     \supp(\nu^\pm_{0,\infty})\, \subset\,
     \left\{ j\in Z\, |\,
       \mu_0^\pm(j) = \omega^2
       \,,\, j_2\in \Z/K \right\}
     \medcap \left(
       \left\{ j\in Z\, |\,
         j_2 = k_2\right\}\phantom{\int} \!\!\!\!\!
       \cup\  J^\pm_{=0,0} \right)\,.
   \end{equation}
\end{theorem}

As above, we have the following corollary for non-singular
frequencies.

\begin{corollary}[Transmission condition for non-singular
   frequencies]\label{cor:uniqueness}
   Let the situation be as in Theorem \ref {thm:VertWaveNumber} and let
   the frequency $\omega$ be non-singular. Then the Bloch measure
   $\nu^\pm_{0,\infty}$ of $u$ satisfies
   \begin{equation}
     \supp(\nu^\pm_{0,\infty})\ \subset\
     \left\{ j\in Z\, |\,
       \mu_0^\pm(j) = \omega^2\, \text{and }
       j_2 = k_2\right\} \,.
   \end{equation}
\end{corollary}

\subsection{Further comments on the main results}

\paragraph{\em On the uniqueness result.}  We recall that we expect
the existence of solutions that are supported on the interface between
photonic crystal and free space. For this reason, uniqueness results
can only provide information ``far away from the interface'',
i.e.\,information on the Bloch measure.

A weakness of our uniqueness results concerns Assumption \ref
{ass:smallnessfrequency}: Our results are proven under the assumption
that the underlying frequency $\omega$ is in the first band (more
precisely: below the second band).  Our conjecture is that our
uniqueness result remains valid for arbitrary frequencies, stating
that $\supp(\nu^\pm_{l,\infty}) \subset \left\{ j |\, \mu_l^\pm(j) =
  \omega^2, j_2\in \Z/K \right\} \cap J^\pm_{=0,l}$ for every $l\ge
0$.  Due to a lack of orthogonality properties in the sesquilinear
form $b$ (see Section \ref{sec.uniqueness}), we must exploit the
frequency assumption in our uniqueness proof.

\paragraph{\em Relations to Fliss and Joly \cite {FlissJoly2016}.} The
contribution \cite {FlissJoly2016} contains strong results: 1. A
uniqueness result in the classical form (due to the absence of an
interface that can support waves and due to the restriction to
non-singular frequencies).  2. An existence result, based on a
limiting absorption principle. We note that also the existence result
of \cite {FlissJoly2016} uses global Floquet-Bloch transformations and
is therefore not easily adaptable to our setting. We remark that our
outgoing wave condition is weaker than the one of \cite
{FlissJoly2016}, see Lemma \ref {lem:OutgoingwaveSingleWave}. This
means that, apart from the problems due to the non-periodic geometry,
an existence proof should be simpler for our outgoing wave condition.

We mention at this place that our outgoing wave condition differs in
one point with all existing radiation conditions: Our condition does
not use explicitely the frequency $\omega$. We regard this as an
advantage: our condition might be applicable also in time-dependent
problems.

\paragraph{\em A possible scaling in $\eps>0$.}  In all our theorems
we keep the length scale $\eps>0$ fixed. In other words: the
wave-length $1/\omega$ and the periodicity length $\eps$ are both of
order $1$.  It is very interesting to analyze the behavior of light in
small micro-structures, i.e.\,to analyze the limit $\eps\to 0$. The
limit can be performed in two settings: In the classical
homogenization problem, one keeps $\omega$ (and hence the wave-length)
fixed and analyzes the behavior of solutions $u = u^\eps$ as $\eps\to
0$. This approach was carried out e.g.\,in \cite{BFe2,
  BouchitteSchweizer-Max, ChenLipton2010, Lamacz-Schweizer-Max,
  Lamacz-Schweizer-Neg}.

The second setting regards the limit $\eps\to 0$ in a situation where
the wave-length of the incoming wave is also of order $\eps$. This is
the scaling that is suggested by our notation in \eqref {eq:u-inc},
which corresponds to a frequency $\omega = \omega^\eps = \eps^{-1}
\omega^*$. Loosely speaking, our Theorem \ref {thm:VertWaveNumber}
yields in this scaling: Solutions $u^\eps$ to the scattering problem
with incoming wave \eqref {eq:u-inc} for fixed $k$ consist, at a fixed
distance $x_1 > 0$ from the interface and in the limit $\eps\to 0$,
only of Bloch waves that correspond to the frequency $\omega^\eps$ and
to the wave number $k_2$ (up to vertical waves).

\paragraph{\bf Outline of this contribution.}
Bloch expansions are described in Section \ref {sec.Bloch}. In Section
\ref {sec.outgoingwave} we define energy flux numbers and
corresponding index sets; these are used to define the new outgoing
wave condition. In Section \ref {sec.uniqueness} we define Bloch
measures, Theorems \ref {thm:uniqueness} and \ref {thm:VertWaveNumber}
are shown in Section \ref{ssec.Bloch-uniqueness}.

\section{Bloch expansions}
\label{sec.Bloch}

\subsection{Pre-Bloch expansions}
We start our analysis with a discrete expansion. This discrete
expansion is the first stage of a Bloch expansion and closely related
to the Floquet-Bloch transform. We apply it to the $h$-periodic
function $u(x_1,\cdot)$. The subsequent result appears as Lemma 4.9 in
\cite{Allaire-Conca-1998}.

\begin{lemma}[Vertical pre-Bloch expansion]
  Let $K\in\N$ be the number of periodicity cells and let $h=\eps K$
  be the height of the strip $\R\times (0,h)$. Let $u\in
  L^2_\loc(\R\times (0,h);\C)$ be a function. Then $u$ can be expanded
  uniquely in periodic functions with phase-shifts: With the finite
  index set $Q_{K} := \{0,\frac{1}{K}, \frac{2}{K}, \dots,
  \frac{K-1}{K}\}$ we find
  \begin{equation}
    \label{eq:discrete-u-1}
    u(x_1, x_2) 
    = \sum_{j_2\in Q_{K}} \Phi_{j_2}(x_1, x_2)\, e^{2\pi i j_2 x_2/\eps}\,,
  \end{equation}
  where each function $\Phi_{j_2}(x_1, \cdot)$ is $\eps$-periodic.
  The equality \eqref {eq:discrete-u-1} holds in
  $L^2_\loc(\R\times(0,h);\C)$.
\end{lemma}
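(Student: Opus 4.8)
The plan is to recognise the claimed expansion as nothing more than a regrouping of the ordinary Fourier series of the $h$-periodic function $u(x_1,\cdot)$ according to residues modulo $K$, made possible by the relation $h=\eps K$. First I would fix $x_1$ (for almost every $x_1$ this is legitimate by Fubini, since $u\in L^2_\loc$ restricts to an $L^2$ function on each slab $(a,b)\times(0,h)$) and write the $h$-periodic Fourier expansion
\[
u(x_1,x_2)=\sum_{n\in\Z} c_n(x_1)\, e^{2\pi i n x_2/h},
\]
which converges in $L^2(0,h)$. The key arithmetic observation is Euclidean division: every $n\in\Z$ can be written uniquely as $n=mK+p$ with $m\in\Z$ and $p\in\{0,1,\dots,K-1\}$. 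Using $h=\eps K$ this splits the exponential as $e^{2\pi i n x_2/h}=e^{2\pi i m x_2/\eps}\,e^{2\pi i (p/K)x_2/\eps}$, in which the first factor is $\eps$-periodic and the second is exactly the phase $e^{2\pi i j_2 x_2/\eps}$ with $j_2=p/K\in Q_K$.

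Grouping the Fourier series by the residue $p$ then yields the desired decomposition: setting
\[
\Phi_{j_2}(x_1,x_2):=\sum_{m\in\Z} c_{mK+p}(x_1)\, e^{2\pi i m x_2/\eps}\qquad(j_2=p/K),
\]
each $\Phi_{j_2}(x_1,\cdot)$ is $\eps$-periodic by construction, and reinserting the phases reproduces $u$. Equivalently, and more explicitly, I would verify the projection formula
\[
\Phi_{j_2}(x_1,x_2)=e^{-2\pi i j_2 x_2/\eps}\,\frac1K\sum_{l=0}^{K-1} e^{-2\pi i j_2 l}\, u(x_1,x_2+l\eps),
\]
whose right-hand side, upon inserting the Fourier series and using $e^{2\pi i n l\eps/h}=e^{2\pi i nl/K}$ together with the finite orthogonality relation $\frac1K\sum_{l=0}^{K-1}e^{2\pi i(n-p)l/K}=\one_{\{n\equiv p\,(K)\}}$, selects precisely the modes with $n\equiv p\pmod K$. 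This formula has the advantage of making manifest both the measurable dependence on $x_1$ and the $L^2_\loc$-continuity of the map $u\mapsto\Phi_{j_2}$.

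Uniqueness is then immediate: the $K$ subspaces of $L^2(0,h)$ spanned by the modes in the residue classes $n\equiv p\pmod K$ are mutually orthogonal and together span $L^2(0,h)$, so the decomposition is an orthogonal direct sum and the $\eps$-periodic factors $\Phi_{j_2}$ are uniquely determined. To upgrade the pointwise-in-$x_1$ statement to the asserted $L^2_\loc$-equality, I would view $L^2((a,b)\times(0,h))\cong L^2\big((a,b);L^2(0,h)\big)$ and apply the orthogonal projections onto the residue-class subspaces fibrewise in $x_1$; they commute with the $L^2((a,b);\cdot)$ structure and preserve measurability, so the identity \eqref{eq:discrete-u-1} holds in $L^2((a,b)\times(0,h))$ for every compact $[a,b]$, hence in $L^2_\loc$.

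Since the outer sum runs over the finite set $Q_K$, there is no convergence subtlety in the expansion itself; the entire content is a finite orthogonal decomposition. The only point that requires genuine care, rather than bookkeeping, is the measurability and $L^2$-continuity in the longitudinal variable $x_1$, i.e.\ the passage from the fibrewise Hilbert-space decomposition to the statement on the product domain---this is where I would invoke Fubini and the boundedness of the finite projection operators. Everything else reduces to the elementary identity $n=mK+p$ and the discrete orthogonality of the characters of $\Z/K\Z$.
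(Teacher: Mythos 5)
Your proposal is correct and follows essentially the same route as the paper's own sketch: expand $u(x_1,\cdot)$ in its vertical Fourier series and regroup the modes by residue classes modulo $K$ (equivalently, by the cosets $j_2+\Z$ inside $\tfrac1K\Z$), which is exactly the paper's construction of $\Phi_{j_2}$. Your additions --- the explicit projection formula via discrete character orthogonality, the uniqueness argument through mutually orthogonal residue-class subspaces, and the fibrewise treatment of the $x_1$-dependence --- are faithful elaborations of details the paper's sketch (which treats only $u=u(x_2)$, $h=1$, deferring to Lemma 4.9 of Allaire--Conca) leaves implicit.
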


\begin{proof}[Sketch of proof]
  We sketch a proof (different from the one chosen in
  \cite{Allaire-Conca-1998}), considering only $u = u(x_2)$ and
  $h=1$. Expanding $u$ in a Fourier series, we may write
  \begin{equation}
    \label{eq:u-Fourier}
    u(x_2)  = \sum_{k_2\in \eps\Z} \beta_{k_2}\, e^{2\pi i k_2 x_2/\eps}\,.
  \end{equation}
  For every $j_2\in \eps\N_0$ with $j_2<1$ (i.e.\,for every $j_2\in
  Q_K$) we set
  \begin{equation}
    \label{eq:Phi-pre-Bloch}
    \Phi_{j_2}(x_2) := \sum_{k_2\in j_2 + \Z} \beta_{k_2}\, 
    e^{2\pi i (k_2-j_2) x_2/\eps}\,.
  \end{equation}
  With this choice, each $\Phi_{j_2}$ is $\eps$-periodic and \eqref
  {eq:discrete-u-1} is satisfied.
\end{proof}

For the above pre-Bloch expansion we define the projection on a
vertical wave number $k_2$ as follows.

\begin{definition}[Vertical pre-Bloch projection $\Pi^\ver_{k_2}$]
  \label{def:verticalprojectionpreBloch}
  Let $u\in L^2_\loc(\R\times (0,h);\C)$ with $h=\eps K$ be a function
  on a strip and let $k_2\in Q_{K}$ be a vertical wave number. Then,
  expanding $u$ as in \eqref {eq:discrete-u-1}, we set
  \begin{align}
    \label{eq:vertProjection-1}
    \Pi^\ver_{k_2}u(x_1,x_2) := 
    \Phi_{k_2}(x_1, x_2)\, e^{2\pi i k_2 x_2/\eps}\,.
  \end{align}
\end{definition}

The projection is an orthogonal projection: For $\eps$-periodic
functions $\Phi$ and $\tilde\Phi$ and indices $k_2\neq\tilde k_2$
there holds $\int_0^h \overline{\Phi(x_2)} e^{-2\pi i \tilde k_2
  x_2/\eps} \tilde\Phi(x_2) e^{2\pi i k_2 x_2/\eps}\, dx_2 = 0$ by
Lemma \ref {lem:orthweight}.

We will later use the following fact: If $u$ is a solution of the
scattering problem with incident vertical wave number $k_2$, then also
the projection $\Pi^\ver_{k_2}u$ is a solution of the scattering
problem. Together with a uniqueness result for solutions, we can
conclude from this fact that the vertical wave number is conserved in
the photonic crystal.  

Below, we have to deal with the following situation: For a function
$u$ on a strip with height $h$, we can perform a pre-Bloch
expansion. We may also extend $u$ periodically in the vertical
direction and perform a pre-Bloch expansion of the extended function
on a wider strip. We find that both constructions yield the same
result.

\begin{remark}[Vertical pre-Bloch expansion of a periodically extended
  function] \label{rem:Blochdifferentperiod}
  Let $K=h/\eps\in\N$ denote the number of periodicity cells in vertical
  direction and let $u\in L^2_\loc(\R\times (0,h))$ be a function with
  vertical pre-Bloch expansion
  \begin{align*}
    u(x_1,x_2)=\sum_ {j_2\in Q_{K}} \Phi_{j_2}(x_1,x_2)\,
    e^{2\pi i j_2 x_2/\eps}.
  \end{align*}
  Let $R\in \N$ be a multiple of $K$ and let $\tilde u$ be the
  periodic extension of $u$ to the interval $(0,\eps R)$ in
  $x_2$-direction.  Then $\tilde u\in L^2_\loc(\R\times (0,\eps R))$
  has the vertical pre-Bloch expansion
  \begin{align}
    \label{eq:PreBlochbiggerperiod}
    \tilde u(x_1,x_2)=\sum_{\tilde j_2\in Q_R} \tilde\Phi_{\tilde
      j_2}(x_1,x_2)\,e^{2\pi i \tilde j_2 x_2/\eps},
  \end{align}
  where the coefficients according to the finer grid $Q_R$ satisfy
  \begin{align*}
    \tilde\Phi_{\tilde j_2}(x)=
    \begin{cases}
       \begin{array}{rcl}
         0 & \text{ if } \tilde j_2\not\in Q_{K}\,,\\
         \Phi_{\tilde j_2}(x) & \text{ if } \tilde j_2\in Q_{K}\,.
       \end{array}
     \end{cases}
   \end{align*}
\end{remark}

The statement follows immediately from the uniqueness of the pre-Bloch
expansion. Remark \ref {rem:Blochdifferentperiod} explains our choice
concerning scalings: Given a sequence of functions $u_R$, defined on
a sequence of increasing domains, at first sight, one might find it
natural to rescale $u_R$ to a standard domain and to analyze the
sequence of rescaled functions. Instead, we work with the sequence
$u_R$ on increasing domains. In this way, one index $j\in Z$ always
refers to the same elementary wave, which allows to investigate the
Bloch measure limit.

\paragraph{Pre-Bloch expansion in two variables.} For a function $u$
that is defined on a rectangle and that is periodic in both
directions, the pre-Bloch expansion in two variables can be defined by
expanding first in one variable and then in the other.

For functions $u$ on $\R\times (0,h)$ the situation is more difficult,
since $u$ is not periodic in $x_1$-direction. In order to expand in
both directions, we truncate $u$ with a cut-off function $\eta :
\R\times [0,h]$ with compact support. For convenience, we assume that
the support of $\eta$ is contained in the square $[0,h]\times [0,h]$.

The truncation of $u$ is defined as $w(x):= u(x)\, \eta(x)$.  We
expand $w$ (on the square $[0,h]\times [0,h]$) in both directions in a
pre-Bloch expansion, using the vector $j = (j_1, j_2)\in Q_{K}\times
Q_{K}$ and $x = (x_1,x_2)$:
\begin{equation}
  \label{eq:discrete-v-eps-2}
  w(x) 
  = \sum_{j\in Q_{K}\times Q_{K}} \Phi_{j}(x)\, e^{2\pi i j\cdot x / \eps}\,.
\end{equation}
The functions $\Phi_j = \Phi_{(j_1, j_2)}$ are now $\eps$-periodic in
both variables. Due to orthogonality there holds ($h=\eps K$)
\begin{align*}
  \frac{1}{(\eps K)^2} 
  \|w\|^2_{L^2(K Y_\eps)}=\sum_{j\in Q_{K}\times Q_{K}} 
  \meanint_{Y_\eps} \left|\Phi_{j}\right|^2, 
\end{align*}
where $\textmean_{Y_\eps}\left|\Phi_{j}\right|^2
:=\frac{1}{|Y_\eps|}\int_{Y_\eps}\left|\Phi_{j}\right|^2$ denotes the
mean value.

\subsection{Bloch expansion}

With the help of the pre-Bloch expansion we construct now the Bloch
expansion. This step consists in developing each of the periodic
functions $\Phi_{j}$ for $j = (j_1, j_2)$ in terms of eigenfunctions
of the operator
\begin{equation}\label{eq:L_j-operator}
  \calL_j^+
  := -\left(\nabla + 2\pi ij/\eps\right) 
  \cdot \left(a^\eps(x) \left(\nabla+ 2\pi ij/\eps\right)\right)\,.
\end{equation}
The operator $\calL_j^+$ acts on complex-valued functions on the cell
$Y_\eps$ with periodic boundary conditions. It appears in the analysis
of \eqref {eq:Peps} for the following reason: Let $\Psi_{j}^+$ be an
eigenfunction of $\calL_j^+$ with eigenvalue $\mu^+(j)$; then there
holds
\begin{equation*}
  -\nabla \cdot \left(a^\eps(x) \nabla [\Psi_{j}^+ e^{2\pi i j\cdot x / \eps}]\right)
  = [\calL_j^+\Psi_{j}^+] e^{2\pi i j\cdot x / \eps} 
  = \mu^+(j)\, [\Psi_{j} e^{2\pi i j\cdot x / \eps}]\,.
\end{equation*}
We see that $\Psi_{j}^+ e^{2\pi i j\cdot x / \eps}$ is a solution of
the Helmholtz equation on the right half-plane if and only if
$\mu^+(j)=\omega^2$.

We have to distinguish between $x_1>0$ and $x_1<0$. On the right, the
expansion is performed with $\calL_j^+$ as above, with the periodic
coefficient $a^\eps = a^\eps(x)$. On the left, expansions are
performed according to $a^\eps\equiv 1$ with the operator $\calL_j^-
:= -\left(\nabla + 2\pi ij/\eps\right) \cdot \left(\nabla+ 2\pi
  ij/\eps\right)$. The result is a classical Fourier expansion of the
solution.

\begin{definition}[Bloch eigenfunctions] Let $j\in[0,1]^2$ be a fixed
  wave vector.  We denote by $\left(\Psi^+_{j,m}\right)_{m\in\N_0}$
  the family of eigenfunctions of the operator $\calL_j^{+}$ of \eqref
  {eq:L_j-operator}. The labelling is such that the corresponding
  eigenvalues $\mu^{+}_m(j)$ are ordered, $\mu_{m+1}(j)\geq \mu_m(j)$
  for all $m\in\N_0$.  Similarly, $\left(\Psi^-_{j,m}
  \right)_{m\in\N_0}$ is the family of eigenfunctions of the operator
  $\calL_j^{-}$ and $\mu^{-}_m(j)$ are the corresponding eigenvalues.
  We normalize with $\textmean_{Y_\eps}|\Psi^\pm_{j,m}|^2 = 1$.
\end{definition}

A standard symmetry argument yields that, after an appropriate
orthonormalization procedure for multiple eigenvalues, all functions
$\Psi^\pm_{j,m}(x)\, e^{2\pi i j\cdot x / \eps}$ with $j\in Q_K$ are
orthonormal in the space $L^2_\sharp(KY_\eps; \mathbb{C}) =
L^2(KY_\eps; \mathbb{C})$ (the sharp symbol is sometimes used to
indicate that one thinks of periodic functions, but, of course, in the
case of $L^2$, the periodicity does not alter the function space). On
the left hand side (i.e.\,for $x_1<0$, denoted with the superscript
``-''), the Bloch eigenfunctions are harmonic waves and the Bloch
expansion coincides with a Fourier expansion. We collect properties on
the left half-domain in Remark \ref {rem:Left-Fourier}.

\begin{lemma}[Bloch expansion]\label{lem:Bloch-expansion}
  Let $K\in\N$ be the number of cells in each direction, let $u\in
  L^2(K Y_\eps;\C)$ be a function on the square $(0,K\eps)\times
  (0,K\eps)$. Expanding $u$ in a pre-Bloch expansion and then
  expanding each $\Phi_j$ in eigenfunctions $\Psi^+_{j,m}$ we obtain,
  with coefficients $\alpha_{j,m}^+\in\C$,
  \begin{align*}
    u(x)&= \sum_{j\in Q_{K}\times Q_{K}} \sum_{m=0}^\infty
    \alpha^+_{j, m}  \Psi^+_{j, m}(x)\, e^{2\pi i j\cdot x / \eps}\,,
  \end{align*}
  and similarly, for an expansion corresponding to constant
  coefficients $a^\eps\equiv 1$,
  \begin{align*}
    u(x)&= \sum_{j\in Q_{K}\times Q_{K}} \sum_{m=0}^\infty
    \alpha^-_{j, m}  \Psi^-_{j, m}(x)\, e^{2\pi i j\cdot x / \eps}\,.
  \end{align*}
\end{lemma}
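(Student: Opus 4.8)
The plan is to build the two-level expansion in the order dictated by its construction: first peel off the phase factors $e^{2\pi i j\cdot x/\eps}$ by a pre-Bloch expansion, and then diagonalise the shifted cell operator $\calL_j^+$. First I would apply the two-variable pre-Bloch expansion \eqref{eq:discrete-v-eps-2} to $u\in L^2(K Y_\eps;\C)$, regarded with periodic identification on the square, to obtain the finite sum
\[
u(x)=\sum_{j\in Q_{K}\times Q_{K}}\Phi_j(x)\,e^{2\pi i j\cdot x/\eps},
\]
where each $\Phi_j$ is $\eps$-periodic in both variables, hence $\Phi_j\in L^2(Y_\eps;\C)$. Because the index set $Q_{K}\times Q_{K}$ is finite, all convergence issues are confined to the inner expansion of the individual coefficient functions $\Phi_j$.

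The analytic core is the second step. I would invoke that, for each fixed $j$, the operator $\calL_j^+$ of \eqref{eq:L_j-operator} on $L^2(Y_\eps;\C)$ with periodic boundary conditions is self-adjoint with compact resolvent. Self-adjointness follows from the real-valuedness of $a^\eps$ together with the symmetric shifted-gradient structure: the associated sesquilinear form $\Phi,\Psi\mapsto\meanint_{Y_\eps}a^\eps\,(\nabla+2\pi i j/\eps)\Phi\cdot\overline{(\nabla+2\pi i j/\eps)\Psi}$ is Hermitian, and it is coercive up to a lower-order shift by the ellipticity bounds $0<a_*\le a^\eps\le a^*$. Compactness of the resolvent is the standard consequence of the compact embedding of the periodic $H^1(Y_\eps)$ into $L^2(Y_\eps)$. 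The spectral theorem then yields the discrete, increasingly ordered eigenvalues $\mu_m^+(j)$ and a complete orthonormal family $(\Psi_{j,m}^+)_{m\in\N_0}$ of $L^2(Y_\eps;\C)$, with the normalisation $\meanint_{Y_\eps}|\Psi_{j,m}^+|^2=1$ already fixed in the preceding definition.

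With completeness in hand the conclusion is bookkeeping. I would expand each coefficient as $\Phi_j=\sum_{m\ge 0}\alpha_{j,m}^+\Psi_{j,m}^+$ in $L^2(Y_\eps;\C)$, with $\alpha_{j,m}^+=\meanint_{Y_\eps}\Phi_j\,\overline{\Psi_{j,m}^+}$, and substitute into the pre-Bloch sum to obtain the stated double series. Orthogonality of the full Bloch waves $\Psi_{j,m}^+ e^{2\pi i j\cdot x/\eps}$ across distinct $j$ (the weighted orthogonality of Lemma \ref{lem:orthweight}) and across distinct $m$ (completeness of the cell eigenbasis) produces a Parseval identity that upgrades the formal rearrangement to genuine $L^2(K Y_\eps)$-convergence. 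The left-hand expansion is identical with $\calL_j^+$ replaced by $\calL_j^-$; there $a^\eps\equiv 1$ makes $\calL_j^-$ a constant-coefficient operator whose periodic eigenfunctions are complex exponentials, so the Bloch expansion collapses to the ordinary Fourier expansion, as already remarked.

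The main obstacle, and the only step that is not routine substitution, is the spectral-theoretic claim that the $\Psi_{j,m}^+$ form a complete orthonormal basis of the cell space $L^2(Y_\eps;\C)$; once self-adjointness and compact resolvent are granted, the completeness and the Parseval identity follow and nothing further is needed.
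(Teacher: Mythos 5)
Your proposal is correct and follows essentially the same route as the paper, which states the lemma without a separate proof as the direct outcome of the construction in Section \ref{sec.Bloch}: a two-variable pre-Bloch expansion followed by expansion of each $\Phi_j$ in the eigenbasis of $\calL_j^\pm$. The details you supply --- self-adjointness and compact resolvent of $\calL_j^\pm$, completeness of the cell eigenbasis, and the Parseval identity obtained from orthonormality in $m$ together with the $j$-orthogonality of Lemma \ref{lem:orthweight} --- are exactly the standard facts the paper implicitly invokes (its ``standard symmetry argument'').
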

 
To shorten notation, we will use the multi-index $\lambda = (j,m)$ in
the index-set $I_{K}:= \{(j,m) | j\in Q_{K}\times Q_{K},\ m\in\N_0\}
\subset I:=Z\times\N_0$. Abbreviating additionally
\begin{align}
  \label{eq:abbreviationUlambda}
  U^\pm_\lambda(x) := \Psi^\pm_\lambda (x)\, e^{2\pi i
    j\cdot x/\eps}\,,
\end{align}
we may write the formulas of Lemma \ref {lem:Bloch-expansion} as
\begin{equation}
  \label{eq:discrete-v-eps-Bloch}
  u(x) = \sum_{\lambda=(j,m)\in I_{K}}
  \alpha^\pm_\lambda \Psi^\pm_\lambda (x)\, e^{2\pi ij\cdot x/\eps}
  = \sum_{\lambda \in I_{K}} \alpha^\pm_\lambda U^\pm_\lambda (x)\,.
\end{equation}
The expansion holds for the basis functions $U^+_\lambda$ with
coefficients $\alpha^+_\lambda$ and for the basis functions
$U^-_\lambda$ with coefficients $\alpha^-_\lambda$. Moreover, due to
$L^2$-orthonormality of the functions $U_\lambda^\pm$, with $h = \eps
K$ and $K Y_\eps = (0,h) \times (0,h)$,
\begin{align}
  \label{eq:L2normexpansion}
  \frac{1}{(\eps K)^2}\|u\|^2_{L^2(K Y_\eps)} = \sum_{\lambda \in
    I_{K}}|\alpha^\pm_\lambda|^2\,.
\end{align}

\section{Outgoing wave condition}\label{sec.outgoingwave}

\subsection{Poynting numbers and projections}

Let $\lambda=(j,m)\in I$ be an index and let $U^\pm_\lambda$ be the
corresponding Bloch function.  Denoting by $e_1= (1,0) \in \R^2$ the
first unit vector, we connect to $\lambda\in I$ the real numbers
\begin{align}
  \begin{split}
    \label{eq:Poynting-def}
    P^+_\lambda &:= \Im\, \meanint_{Y_\eps} \bar U^+_\lambda (x)\,
    e_1\cdot \left[a^\eps(x)\nabla U^+_\lambda(x)\right]\, dx\,,\\
    P^-_\lambda &:= \Im\, \meanint_{Y_\eps} \bar U^-_\lambda (x)\, e_1\cdot \nabla
    U^-_\lambda(x)\, dx\,.
  \end{split}
\end{align}
The number $P^+_\lambda$ is related to the Poynting vector of the
Bloch eigenfunction $U^+_\lambda$. It corresponds (up to a positive
constant) to the horizontal group velocity of this eigenfunction and
measures its energy flux in horizontal direction: In the case
$P^+_\lambda > 0$, the energy of the wave is travelling to the right,
in the case $P^+_\lambda < 0$, the energy of the wave is travelling
to the left. 

Let us point out the relation to Maxwell's equations: If $u$ denotes the
out-of-plane magnetic field, i.e.\,$H = (0,0,u)$, then the electric
field is $(E_1, E_2, 0)$ with $E_1 = (-i\omega\epsilon)^{-1} \del_2 u$
and $E_2 = (i\omega\epsilon)^{-1} \del_1 u$ where $\epsilon$ is the
permittivity of the medium.  The complex Poynting vector is $P =
\frac12 E\times \bar H$, so the real part of its horizontal component
is $\Re (e_1 \cdot P) = \Re(\frac12 \bar H_3 E_2) = (2\omega)^{-1}
\Re(-i\, \bar u\, \epsilon^{-1} \del_1 u) = (2\omega)^{-1} \Im(\bar
u\, a\del_1 u)$, where we used that the coefficient $a =
\epsilon^{-1}$ is the inverse permittivity. Our expression in \eqref
{eq:Poynting-def} coincides up to the factor $2\omega$ with an
integral of this expression.  Since $P$ represents a local energy
flux, the physical quantity of a total energy flux is a surface
integral over $P$. In fact, for solutions $U_\lambda$ of a Helmholtz
equation, the surface integral is independent of the position of the
surface. Hence our volume integral in \eqref {eq:Poynting-def} indeed 
coincides with the physical quantity of a surface integral.

{\em The index set for $\lambda$:} In our construction, we fix the
height $h>0$ of the domain and the periodicity length $\eps = h/K$,
the Bloch expansion is performed in this fixed geometry. As a
consequence, we consider only indices $\lambda = (j,m) \in I_K$, the
frequency parameter $j$ must lie in the discrete set $Q_K\times Q_K
\subset Z$.  On the other hand, for arbitrary $j\in Z$, we can still
consider the functions $\Psi^\pm_{j,m}$ and $U^\pm_\lambda$. They do
not depend on $K$, hence also the values $P^\pm_\lambda$ are
independent of $K$.

\begin{definition}[Index sets and projections]
  \label{def:Projections}
  We define the set of indices corresponding to right-going waves in
  $x_1>0$ as
  \begin{equation}
    \label{eq:index-sets}
    I^+_{>0} := \left\{ \lambda\in I\ |\  P^+_\lambda > 0\right\}\,.
  \end{equation}
  The index sets $I^-_{>0}$, $I^\pm_{<0}$, $I^\pm_{\ge 0}$,
  $I^\pm_{\le 0}$, $I^\pm_{=0}$ are defined accordingly.

  For $K\in\N$ we define the projections $\Pi^\pm_{>0}$ as follows:
  Let $u\in L^2(K Y_\eps;\C)$ be a function with the discrete Bloch
  expansion
  \begin{equation*}
    u(x) = \sum_{\lambda\in I_{K}} \alpha^\pm_\lambda
    U^\pm_\lambda(x)\,.
  \end{equation*}
  Then we set
  \begin{equation*}
    \Pi^\pm_{>0}u(x) :=\sum_{\lambda\in I_{K}\cap\,
      I^\pm_{>0}}\alpha^\pm_\lambda U^\pm_\lambda(x)\,.
  \end{equation*}
  With this definition, $\Pi^\pm_{>0}$ are the projections onto
  right-going Bloch-waves. The projections $\Pi^\pm_{<0},\Pi^\pm_{\geq
    0}$, $\Pi^\pm_{\leq 0}$, and $\Pi^\pm_{=0}$ are defined
  accordingly.

  For $k_2\in Q_{K}$ and $l\in\N_0$, the ``vertical'' projection
  $\Pi^{\ver,\pm}_{k_2}$ and the ``eigenvalue'' projection
  $\Pi^{\ev,\pm}_{l}$ are defined by
  \begin{align*}
    \Pi^{\ver,\pm}_{k_2}u(x)&:=\sum_{\lambda\in \{(j,m)\in I_{K}|\,j_2=k_2\}} 
    \alpha^\pm_\lambda U^\pm_\lambda(x)\,,\\
     \Pi^{\ev,\pm}_{l}u(x)&:=\sum_{\lambda\in \{(j,m)\in I_{K}|\,m=l\}}
     \alpha^\pm_\lambda U^\pm_\lambda(x)\,.
  \end{align*}
\end{definition}

Note that the projections $\Pi^{\ver,\pm}_{k_2}$ of the discrete Bloch
expansion indeed coincide with the projection $\Pi^\ver_{k_2}$ of the
corresponding vertical pre-Bloch expansion of Definition
\ref{def:verticalprojectionpreBloch}. The vertical projection is
independent of $K$ in the sense that a periodically extended $u$ with
a larger value of $K$ has the same projection, compare Remark \ref
{rem:Blochdifferentperiod}.

\subsection{Bloch expansion at infinity and outgoing wave condition}

We can now formulate the outgoing wave condition for a solution $u$ of
the Helmholtz equation \eqref{eq:Peps}. The loose description of our
outgoing wave condition (on the right) is: The Bloch expansion of $u$
does not contain Bloch waves that transport energy to the left.

For a rigorous definition we must deal with the problem that $u$ is
not necessarily periodic in $x_1$-direction. Our solution to this
problem is to consider $u$ on large domains (which reduces the effects
of non-periodicity) and to employ a truncation procedure.
Furthermore, we want to formulate a condition that characterizes $u$
for large values of $x_1$. For these two reasons, we consider $u(x_1,
x_2)$ for $x_1 \in (R\eps, 2R\eps)$ with a large natural number $R >>
K$.

In order to construct a function on a large domain, we consider the
periodic extension of $u$ in vertical direction and restrict
afterwards the function to a large rectangle. For convenience of
notation, we restrict our analysis to squares.

\begin{definition}[Bloch expansion far away from the interface]
  \label{def:restrictionbox}
  Let $u\in L^2_\loc(\R\times (0,h);\C)$ be a function on the infinite
  strip with height $h = \eps K$. Let $R\in \N K$ be a multiple of
  $K$. We define $\tilde u:\R^2\rightarrow \C$ as the $h$-periodic
  extension of $u$ in $x_2$-direction.  We furthermore define
  functions $u_R^\pm : RY_\eps\to \C$ by
  \begin{align}
    u_R^+(x_1,x_2) &:= \tilde u(R\eps+x_1,x_2)\,,\\
    u_R^-(x_1,x_2) &:= \tilde u(-2R\eps+x_1,x_2)\,.
  \end{align} 
  We use the discrete Bloch expansions of the functions $u^\pm_R\in
  L^2_\sharp(RY_\eps;\C)$,
  \begin{equation}
    \label{eq:uRdiscreteBloch}
    u^\pm_R(x) = \sum_{\lambda\in I_{R}} 
    \alpha^\pm_{\lambda,R} U^\pm_\lambda(x)\,.
  \end{equation}
  The coefficients $(\alpha^\pm_{\lambda,R})_{\lambda\in I}$ encode
  the behavior of $u$ for large values of $|x_1|$.
\end{definition}

We are now in the position to define the outgoing wave condition for a
solution $u$ to the Helmholtz equation, using the short notation
$\meanint_{RY_\eps} f := \frac1{|RY_\eps|} \int _{RY_\eps} f$ for
averages of functions.

\begin{definition}[Outgoing wave condition]
  \label{def:outgoingwave}
  For $K\in\N$, $h = K\eps$, and $R\in \N K$, we consider $u\in
  L^2_\loc(\R\times (0,h);\C)$. We say that $u$ satisfies the outgoing
  wave condition on the right if the following two conditions are
  satisfied: $\int_0^h \int_L^{L+1} |u|^2$ is bounded, independently
  of $L\ge0$, and
  \begin{equation}
    \label{eq:outgoingright}
    \meanint_{RY_\eps}\left|\Pi^+_{<0}(u^+_R)\right|^2\to 0
    \text{ as }  R\to \infty\,.
  \end{equation}

  Accordingly, we say that $u$ satisfies the outgoing wave condition
  on the left, if $\int_0^h \int_{L-1}^{L} |u|^2$ is bounded,
  independently of $L\le 0$, and if
  \begin{equation}
    \label{eq:outgoingleft}
    \meanint_{RY_\eps}\left|(\Pi^-_{>0}(u^-_R)\right|^2\to 0
    \text{ as }  R\to \infty\,.
  \end{equation}
\end{definition}

Let us repeat the idea of condition \eqref {eq:outgoingright}: The
function $u$ is considered at the far right by construcing $u^+_R$ as
in Definition \ref {def:restrictionbox}. This function is projected
onto the space of left-going waves. We demand that the $L^2$-averages
of the resulting functions $\Pi^+_{<0}(u^+_R)$ vanish in the limit
$R\to \infty$.

With the expansion \eqref {eq:uRdiscreteBloch} we can write condition
\eqref {eq:outgoingright} equivalently as:
\begin{equation}
\label{eq:utgoingrightequivalent}
  \sum_{\lambda\in I_{R}\cap\,
    I^+_{<0}} |\alpha^+_{\lambda,R}|^2 \to 0\text{ as }  R\to \infty\,.
\end{equation}
Our aim is to show that this definition of an outgoing wave condition
implies uniqueness properties for the scattering problem.

We note that the uniform $L^2$-bounds for large values of $|L|$ imply,
for solutions $u$ of the Helmholtz equation, also uniform bounds for
gradients, see Lemma \ref {lem:Caccioppoli} in the appendix.

\subsection{Truncations and $m\ge 1$-projections}
\label{ssec.truncations}

In the outgoing wave condition, we study the limit $|x_1| \sim R \to
\infty$ and the functions $u^\pm_R$ on large squares $W_R := RY_\eps =
(0,R\eps)^2$ with $|W_R| = (\eps R)^2$. As a measure for typical
values of a function $v$ we use $L^2$-averages on $W_R$ and the
corresponding scalar product,
\begin{align}\label{eq:weighted-norms}
  \la v, w \ra_R :=\meanint _{W_R}  v\cdot \bar w\,.
\end{align}

In the following we denote by $\calL_0 = \calL_0^+ = -\nabla\cdot
\left(a^\eps \nabla\right)$ the elliptic operator of \eqref
{eq:L_j-operator}.  As above, we denote cubes by $W_R = RY_\eps$ and,
by slight abuse of notation, we write $W_{R-1}:=\eps(1,R-1)^2$ for a
smaller cube that has the point $\eps (1,1)$ as its bottom left corner
and $\eps (R-1,R-1)$ as its top right corner. We use a family of
smooth cut-off functions $\eta := \eta_R$ with the properties
\begin{align}
  \label{eq:cutofffct}
  \eta_R\in C^\infty(W_R;\R),\quad \eta_R=1\,\text{on }W_{R-1},\quad
  \|\nabla\eta_R\|_\infty\leq C_0, \quad\|\nabla^2\eta_R\|_\infty\leq
  C_0
\end{align}
for some $R$-independent constant $C_0$ ($\eps>0$ is fixed), and with
compact support in $(0,R\eps)\times (0,R\eps)_\sharp$, where
$(0,R\eps)_\sharp$ indicates the interval with identified end
points. The latter requirement admits sequences $\eta$ with compact
support in $(0,R\eps)\times (0,R\eps)$, but also sequences of
vertically periodic functions $\eta$, in particular functions
$\eta=\eta(x_1)$. In the subsequent proofs we do not indicate the
$R$-dependence of $\eta_R$ and write only $\eta$. We furthermore omit
the superscipts $\pm$, the eigenvalue corresponding to $\lambda =
(j,m)$ is denoted by $\mu_\lambda = \mu_m(j)$.  Constants are allowed
to depend on $\eps>0$.

\begin{lemma}[The effect of truncations] \label{lem:almost-psi} For
  $R\in\N$ let $\eta = \eta_R$ be a family of cut-off functions
  satisfying \eqref{eq:cutofffct}. Let $v_R$ and $w_R$ be sequences of
  functions in $L^2(W_R;\C)$ with $v_R\in H^2(W_R;\C)$. We assume that
  certain averages over boundary strips are bounded:
  \begin{equation}
    \label{eq:UniformBounds}
    \frac1{R} \int_{W_R\setminus W_{R-1}} |v_R|^2 + |\nabla v_R|^2
    \le C_0\,,\qquad \frac1{R} \int_{W_R\setminus W_{R-1}} |w_R|^2
    \le C_0\,,
  \end{equation} 
  with $C_0$ independent of $R$.  Then, with a constant $C$ that is
  independent of $R$:
  \begin{enumerate}
  \item Application of $\calL_0$ to a truncated function:
    \begin{equation}
      \meanint_{W_R}\left| \calL_0 (v_R)\eta - \calL_0(v_R\eta) \right|^2
      =\meanint_{W_R}\left| \calL_0 (v_R)\eta - \sum_{\lambda\in I_R}
        \mu_\lambda\langle v_R\eta, U_\lambda\rangle_R\,
        U_\lambda \right|^2 \leq \frac{C}{R}\,.
      \label{eq:solutions-and-mu-factors-weight}
    \end{equation}
  \item If $\Pi$ is one of the projections of Definition
    \ref{def:Projections}, then
    \begin{equation}
      \label{eq:projectiontruncated}
      \meanint_{W_R} \left|\Pi (w_R)-\Pi (w_R\,\eta)\right|^2 \leq
      \meanint_{W_R} \left|w_R-w_R\,\eta\right|^2 \leq
      \frac{C}{R}\,.
    \end{equation}
  \end{enumerate}
\end{lemma}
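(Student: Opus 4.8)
The plan is to treat the two estimates separately, starting with the simpler bound in part (2) and then turning to the commutator estimate in part (1).

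For part (2), I would first use that each $\Pi$ in Definition \ref{def:Projections} is an orthogonal projection with respect to the weighted scalar product $\la\cdot,\cdot\ra_R$; this is exactly the content of the Parseval identity \eqref{eq:L2normexpansion}, which shows that the $U_\lambda^\pm$ are orthonormal in $\la\cdot,\cdot\ra_R$. Hence $\Pi$ does not increase the weighted norm, and applying it to $w_R-w_R\eta$ gives at once the first inequality $\meanint_{W_R}|\Pi(w_R)-\Pi(w_R\eta)|^2\le\meanint_{W_R}|w_R-w_R\eta|^2$. For the second inequality I note that $w_R-w_R\eta=(1-\eta)\,w_R$ vanishes on $W_{R-1}$, since $\eta\equiv1$ there, so the integrand is supported in the boundary strip $W_R\setminus W_{R-1}$; using $|1-\eta|\le1$ and $|W_R|=(\eps R)^2$, the second bound in \eqref{eq:UniformBounds} yields $\meanint_{W_R}|(1-\eta)w_R|^2\le(\eps R)^{-2}\int_{W_R\setminus W_{R-1}}|w_R|^2\le(\eps R)^{-2}C_0R=C/R$.

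For part (1), I would first justify the displayed equality. The function $v_R\eta$ lies in $H^2(W_R;\C)$ and, because $\eta$ is compactly supported in the $x_1$-direction and vertically periodic by the $\sharp$-support condition, it satisfies the periodic boundary conditions on the box $W_R$ under which the $U_\lambda$ are eigenfunctions of $\calL_0$. Writing its Bloch expansion $v_R\eta=\sum_{\lambda\in I_R}\la v_R\eta,U_\lambda\ra_R\,U_\lambda$ and using $\calL_0U_\lambda=\mu_\lambda U_\lambda$ (the computation preceding Lemma \ref{lem:Bloch-expansion}), applying $\calL_0$ termwise gives $\calL_0(v_R\eta)=\sum_{\lambda\in I_R}\mu_\lambda\la v_R\eta,U_\lambda\ra_R\,U_\lambda$, so that the middle expression coincides with the left-hand one. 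The core is then a commutator identity: a direct Leibniz computation with $\calL_0=-\nabla\cdot(a^\eps\nabla\cdot)$ gives
\[
  \calL_0(v_R)\,\eta-\calL_0(v_R\eta)=2a^\eps\nabla\eta\cdot\nabla v_R+v_R\,\nabla a^\eps\cdot\nabla\eta+a^\eps v_R\,\Delta\eta.
\]
The decisive point is that every term on the right contains a derivative of $\eta$; since $\eta\equiv1$ on $W_{R-1}$, all three terms are supported in the boundary strip $W_R\setminus W_{R-1}$. Using $0<a_*\le a^\eps\le a^*$, the bound $\|\nabla\eta\|_\infty\le2$, the Hessian bound giving $|\Delta\eta|\le2C_0$, and the fact that $a^\eps\in C^1$ with $\eps$ fixed so that $\|\nabla a^\eps\|_\infty$ is a finite $R$-independent constant, I bound the integrand pointwise by $C(|\nabla v_R|+|v_R|)$ on the strip. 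Squaring, integrating, and invoking the first bound in \eqref{eq:UniformBounds} together with $|W_R|=(\eps R)^2$ gives
\[
  \meanint_{W_R}\left|\calL_0(v_R)\eta-\calL_0(v_R\eta)\right|^2\le\frac{C}{(\eps R)^2}\int_{W_R\setminus W_{R-1}}\left(|\nabla v_R|^2+|v_R|^2\right)\le\frac{C}{(\eps R)^2}\,C_0R=\frac{C}{R}.
\]

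I expect the main obstacle to be the careful justification of the termwise identity $\calL_0(v_R\eta)=\sum_\lambda\mu_\lambda\la v_R\eta,U_\lambda\ra_R\,U_\lambda$: one must verify that $v_R\eta$ is genuinely an admissible function for the periodic Bloch calculus on $W_R$ (vanishing at the horizontal ends because $\eta$ has compact support in $x_1$, and vertically periodic by the $\sharp$-condition) and that $\calL_0$ may be exchanged with the infinite Bloch sum in $L^2$. The commutator computation and the strip estimate are then routine; their only subtlety is the bookkeeping of constants, all of which are $R$-independent precisely because $\eps$ and the cut-off bounds in \eqref{eq:cutofffct} are.
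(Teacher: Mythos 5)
Your proof is correct and, in substance, it is the paper's own: part (2) coincides with the paper's argument (norm-boundedness of $\Pi$ plus the fact that $(1-\eta)w_R$ is supported in $W_R\setminus W_{R-1}$), and in part (1) your three Leibniz terms $2a^\eps\nabla\eta\cdot\nabla v_R+v_R\nabla a^\eps\cdot\nabla\eta+a^\eps v_R\Delta\eta$ are exactly the paper's error terms $2a^\eps\nabla v_R\cdot\nabla\eta-v_R\calL_0(\eta)$, estimated on the boundary strip in the same way. The only real difference is organizational, and it concerns precisely the step you flag as the main obstacle: the paper never exchanges $\calL_0$ with an infinite Bloch sum. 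Instead of expanding $v_R\eta$ in the basis and applying $\calL_0$ termwise, it expands the $L^2$-function $\calL_0(v_R)\eta$ itself in the orthonormal basis $(U_\lambda)_{\lambda\in I_R}$ and moves $\eta$ and $\calL_0$ around inside the inner products by integration by parts; the same device, recorded in \eqref{eq:L0-applied-to-w}, yields the identity $\calL_0(v_R\eta)=\sum_{\lambda\in I_R}\mu_\lambda\la v_R\eta,U_\lambda\ra_R\,U_\lambda$ directly, since $\la\calL_0(v_R\eta),U_\lambda\ra_R=\la v_R\eta,\calL_0 U_\lambda\ra_R=\mu_\lambda\la v_R\eta,U_\lambda\ra_R$, with no boundary terms because $v_R\eta$ vanishes near the lateral boundary and is vertically periodic --- exactly the admissibility properties you identified. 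So the ``termwise exchange'' is a red herring: one only needs to expand a function already known to lie in $L^2(W_R)$ (here $v_R\in H^2$ is used) and compute its coefficients by duality. With that substitution your argument is complete; everything else --- the commutator identity, the support of $\nabla\eta$ in the strip, the bounds \eqref{eq:UniformBounds}, and the $R$-independence of constants via \eqref{eq:cutofffct} and $\|\nabla a^\eps\|_\infty<\infty$ --- matches the paper's proof.
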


\begin{proof}
  In the following, the letter $C$ denotes different constants,
  possibly varying from one line to the next, but always independent
  of $R$.  To prove \eqref{eq:solutions-and-mu-factors-weight}, we
  expand the $L^2$-function $\calL_0 (v_R)\eta$ in Bloch-waves. The
  following calculation uses several times integration by parts; due
  to the $\eta$-factor, no boundary integrals occur. In the first
  equation we use that the coefficient $\alpha_\lambda$ in the
  expansion of $\calL_0(v_R)\eta$ is obtained by taking the scalar
  product with $U_\lambda$ (orthonormality of the $U_\lambda$).
  \begin{align*}
    \calL_0(v_R)\eta
    &=\sum_{\lambda\in I_R}\left\langle \calL_0(v_R)\eta,
      U_\lambda\right\rangle_R U_\lambda
    =\sum_{\lambda\in I_R}\left\langle v_R, \calL_0(\eta U_\lambda) 
    \right\rangle_R U_\lambda\\
    &=\sum_{\lambda\in I_R}\left(\left\langle
        v_R\eta,\calL_0 U_\lambda\right\rangle_R+ \left\langle v_R
        \calL_0(\eta),U_\lambda\right\rangle_R-2\left\langle v_R\,
        a^\eps\nabla\eta,
	\nabla U_\lambda\right\rangle_R\right)U_\lambda\\
    &=\sum_{\lambda\in I_R}\left(\mu_\lambda\left\langle
        v_R\eta,U_\lambda\right\rangle_R 
      - \left\langle v_R  \calL_0(\eta),U_\lambda\right\rangle_R
      +2\left\langle \nabla v_R\cdot  a^\eps\nabla\eta ,
        U_\lambda\right\rangle_R\right)U_\lambda\\
    &= \left(\sum_{\lambda\in I_R}\mu_\lambda\left\langle
      v_R\eta,U_\lambda\right\rangle_R U_\lambda \right) 
  -v_R \calL_0(\eta)+2a^\eps\nabla v_R\cdot\nabla\eta\,,
  \end{align*}
  where in the third equality we exploited $\calL_0
  U_\lambda=\mu_\lambda U_\lambda$ and $\mu_\lambda\in \R$.  The
  contribution of the last two terms can be estimated by
  \begin{align*}
    &\meanint_{W_R}\left|v_R\,\calL_0(\eta)\right|^2
    +\left|2a^\eps\nabla v_R\cdot\nabla\eta\right|^2 \\
    &\quad \leq
    \|\calL_0(\eta)\|^2_{L^\infty(W_R)}\meanint_{W_R}|v_R|^2
    1_{\{\text{supp}(\nabla\eta)\}}
    +\|2a^\eps\nabla\eta\|^2_{L^\infty(W_R)}\meanint_{W_R}|\nabla v_R|^2 
    1_{\{\text{supp}(\nabla\eta)\}}\displaybreak[2]\\
    &\quad \leq \frac{C}{R^2}
    \left(
      \int_{W_R\setminus  W_{R-1}}|v_R|^2
      + 
      \int_{W_R\setminus W_{R-1}}|\nabla v_R|^2 \right)
     \leq \frac{C}{R}\,.
  \end{align*}
  In the second inequality we exploited $\text{supp}(\nabla\eta)
  \subset (W_R\setminus W_{R-1})$, in the last inequality we used the
  uniform bounds \eqref{eq:UniformBounds}. This proves the inquality
  in \eqref {eq:solutions-and-mu-factors-weight}.

  Regarding the equality in \eqref
  {eq:solutions-and-mu-factors-weight} we have to verify that the
  formal equality $\calL_0 w = \sum_\lambda \mu_\lambda\langle w,
  U_\lambda\rangle_R\, U_\lambda$ holds for functions $w\in H^2(W_R)$
  with vanishing boundary data. We find this from
  \begin{equation}\label{eq:L0-applied-to-w}
    \la \calL_0 w, U_{\lambda} \ra \stackrel{(!)}{=} 
    \la  w, \calL_0 U_{\lambda} \ra = \mu_{\lambda} \la  w, U_{\lambda} \ra\,,
  \end{equation}
  where we used in the marked equality that boundary terms vanish.

  Inequality \eqref{eq:projectiontruncated} is a direct consequence of
  linearity and norm-boundedness of the projections:
  \begin{align*}
    &\meanint_{W_R} \left|\Pi w_R-\Pi (w_R\,\eta)\right|^2
    = \meanint_{W_R} \left|\Pi (w_R (1-\eta)) \right|^2\\
    &\qquad \le \meanint_{W_R} \left|w_R (1-\eta)\right|^2 
    \le 
    \frac{C}{R^2} \int_{W_R\setminus W_{R-1}}
    \left|w_R\right|^2 \leq \frac{C}{R}\,.
  \end{align*}
  This concludes the proof.
\end{proof}

\paragraph{A warning concerning the non-periodicity of truncated
  solutions.}
Let $u$ be a vertically periodic solution of the Helmholtz equation on
$\R\times (0,h)$ and let $u^+_R$ be defined as in Definition \ref
{def:restrictionbox}.  Then $u^+_R$ is a solution of the Helmholtz
equation on the open square $W_R = R Y_\eps$. But $u^+_R$ is not a
{\em periodic solution} on the square (since it is not periodic in
horizontal direction).

This fact implies that certain formal calculations are wrong: Let
$u^+_R$ have the Bloch expansion $u^+_R = \sum_{\lambda\in I_R}
\alpha_\lambda U_\lambda^+$ for some coefficients $\alpha_\lambda$
(every $L^2$-function posesses such an expansion). Then, in general,
the following identity fails to hold:
$$\calL_0 \sum_{\lambda\in I_R} \alpha_\lambda U_\lambda^+
\stackrel{(?)}{=} \sum_{\lambda\in I_R} \alpha_\lambda
\calL_0(U_\lambda^+).$$

To see this, let us assume that the relation (?) holds. Then the
Helmholtz equation provides $\sum_{\lambda\in I_R} \omega^2
\alpha_\lambda U_\lambda^+ = \omega^2 u^+_R = \calL_0 u^+_R = \calL_0
\sum_{\lambda\in I_R} \alpha_\lambda U_\lambda^+ \stackrel{(?)}{=}
\sum_{\lambda\in I_R} \alpha_\lambda \calL_0(U_\lambda^+) =
\sum_{\lambda\in I_R} \alpha_\lambda \mu_\lambda^+ U_\lambda^+$.
Uniqueness of the Bloch expansion implies $\alpha_\lambda (\omega^2 -
\mu_\lambda^+) = 0$ for every $\lambda\in I_R$, hence $\alpha_\lambda
= 0$ for every $\lambda$ with $\mu_\lambda^+ \neq \omega^2$. We
conclude that the Bloch expansion of $u^+_R$ contains only
contributions from those basis functions $U_\lambda^+$ with
$\mu^+_\lambda = \omega^2$.

This is a contradiction for general solutions $u$: Let $\omega$ be a
frequency such that $\omega^2$ is not contained in the discrete set of
$(\mu^+_\lambda)_{\lambda\in I_R}$. Let furthermore $u$ be a
non-vanishing Bloch wave for the frequency $\omega$. Then the
expansion of $u$ does not use only $\alpha_\lambda = 0$, a
contradiction.

\begin{lemma}[Contributions from energy levels $m\ge 1$] 
  \label{lem:only-m=0} 
  Let $\omega$ satisfy the smallness condition \eqref
  {eq:smallnessfrequency} of Assumption \ref
  {ass:smallnessfrequency}. Let $u\in L^2_\loc(\R\times (0,h);\C)$ be
  a vertically periodic solution of the Helmholtz equation $\calL_0 u
  = \omega^2 u$ satisfying the uniform $L^2$-bounds of Definition
  \ref{def:outgoingwave}. Let $\eta = \eta_R$ be a family of cut-off
  functions as in \eqref{eq:cutofffct}. Then, with a constant $C$ that
  is independent of $R$:
  \begin{align}
    \label{eq:projection-m-ge-1}
    \meanint_{W_R} \left| \Pi^{\ev,\pm}_{m\ge 1} (u^\pm_R) \right|^2
    &\leq\frac{C}{R} \,\quad\text{ and }\quad \meanint_{W_R} \left|
      \Pi^{\ev,\pm}_{m\ge 1} (u^\pm_R\,\eta) \right|^2 \leq\frac{C}{R}\,.
  \end{align}
\end{lemma}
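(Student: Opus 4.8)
The plan is to deduce both estimates from Lemma~\ref{lem:almost-psi} together with the spectral gap furnished by Assumption~\ref{ass:smallnessfrequency}. Throughout I drop the superscript $\pm$ and write $u_R$, $\mu_\lambda$, $U_\lambda$, etc. Since $u$ solves $\calL_0 u = \omega^2 u$ and $u_R$ is a translate of the $h$-periodic extension $\tilde u$, the function $u_R$ also satisfies $\calL_0 u_R = \omega^2 u_R$ on $W_R$; by elliptic regularity ($a\in C^1$) we have $u_R\in H^2(W_R)$, and in particular $\calL_0(u_R)\eta = \omega^2\, u_R\eta$. I first treat the truncated function $u_R\eta$ and then transfer the bound to $u_R$.

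Applying part~1 of Lemma~\ref{lem:almost-psi} with $v_R = u_R$ (whose hypotheses are verified below) and inserting $\calL_0(u_R)\eta = \omega^2 u_R\eta$ gives
\begin{align*}
  \meanint_{W_R}\left| \omega^2 u_R\eta - \sum_{\lambda\in I_R}
  \mu_\lambda\,\langle u_R\eta, U_\lambda\rangle_R\, U_\lambda \right|^2
  \leq \frac{C}{R}\,.
\end{align*}
Writing $\beta_\lambda := \langle u_R\eta, U_\lambda\rangle_R$ and using the Bloch expansion $u_R\eta = \sum_\lambda \beta_\lambda U_\lambda$, the integrand equals $\sum_{\lambda} (\omega^2 - \mu_\lambda)\beta_\lambda U_\lambda$, so by the orthonormality identity \eqref{eq:L2normexpansion},
\begin{align*}
  \sum_{\lambda\in I_R} |\omega^2 - \mu_\lambda|^2\,|\beta_\lambda|^2
  \leq \frac{C}{R}\,.
\end{align*}
The smallness assumption \eqref{eq:smallnessfrequency} yields a uniform gap $c_0 := \inf_{j\in Z,\, m\ge 1}\mu_m(j) - \omega^2 > 0$, so that $|\omega^2 - \mu_\lambda| \ge c_0$ for every $\lambda = (j,m)$ with $m\ge 1$. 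Restricting the sum to such indices gives $c_0^2\sum_{\lambda\in I_R,\, m\ge 1}|\beta_\lambda|^2 \le C/R$, and since the left-hand sum is precisely $\meanint_{W_R}|\Pi^{\ev}_{m\ge 1}(u_R\eta)|^2$, the second estimate in \eqref{eq:projection-m-ge-1} follows. This is the step where the frequency assumption is essential: without the gap one could not separate the $m\ge 1$ coefficients from the vanishing factor $\omega^2-\mu_\lambda$.

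For the first estimate I apply part~2 of Lemma~\ref{lem:almost-psi} with $\Pi = \Pi^{\ev}_{m\ge 1}$ and $w_R = u_R$, which gives $\meanint_{W_R}|\Pi^{\ev}_{m\ge 1}(u_R) - \Pi^{\ev}_{m\ge 1}(u_R\eta)|^2 \le C/R$; combining this with the estimate just proved via the triangle inequality in the weighted $L^2$-norm and squaring yields $\meanint_{W_R}|\Pi^{\ev}_{m\ge 1}(u_R)|^2 \le C/R$. It remains to check the hypotheses of Lemma~\ref{lem:almost-psi}, i.e.\ the uniform bounds \eqref{eq:UniformBounds} for $u_R$ and $\nabla u_R$ on the boundary strip $W_R\setminus W_{R-1}$. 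This is the main technical point, and it is where the uniform $L^2$-bounds of Definition~\ref{def:outgoingwave} enter: the strip consists of $O(R)$ cells of width $\eps$, and by vertical periodicity together with the uniform horizontal-strip bound the $L^2$-mass of $u_R$ over it is $O(R)$, giving $\frac1R\int_{W_R\setminus W_{R-1}}|u_R|^2 \le C_0$; the matching gradient bound follows from the Caccioppoli estimate of Lemma~\ref{lem:Caccioppoli}. I emphasize that one cannot conclude exact vanishing of the $m\ge 1$ coefficients: $u_R$ fails to be periodic in $x_1$ (see the remark preceding the lemma), which is exactly why Lemma~\ref{lem:almost-psi} produces an $O(1/R)$ error rather than an identity.
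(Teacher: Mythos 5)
Your proof is correct and takes essentially the same route as the paper's: both rest on the spectral gap of Assumption \ref{ass:smallnessfrequency}, the pointwise solution property $\calL_0 u_R^\pm = \omega^2 u_R^\pm$, Parseval's identity for the Bloch expansion, and the two parts of Lemma \ref{lem:almost-psi} (part 1 to prove the estimate for the truncated function $u_R^\pm\eta$, part 2 to transfer it to $u_R^\pm$), with the hypotheses \eqref{eq:UniformBounds} verified through the uniform $L^2$-bounds of Definition \ref{def:outgoingwave} and the Caccioppoli estimate of Lemma \ref{lem:Caccioppoli}. The only difference is the direction in which the chain of inequalities is traversed: you start from Lemma \ref{lem:almost-psi} and deduce $\sum_\lambda|\omega^2-\mu_\lambda|^2|\beta_\lambda|^2\le C/R$ before restricting to $m\ge 1$, whereas the paper starts from $\delta\meanint_{W_R}|\Pi^{\ev,+}_{m\ge 1}(u_R^+\eta)|^2$ and bounds it above by the same quantity.
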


\begin{proof} 
  We perform the proof for the superscript ``+''.  Relation
  \eqref{eq:projectiontruncated} applied to $u^+_R$ provides
  \begin{align*}
    \meanint_{W_R} \left|\Pi^{\ev,+}_{m\ge 1} (u^+_R)-\Pi^{\ev,+}_{m\ge 1}
      (u^+_R\,\eta)\right|^2 \leq \frac{C}{R}\,.
  \end{align*}
  Indeed, by the uniform $L^2$-bounds of $u$, the condition
  $\tfrac{1}{R}\int_{W_R\setminus W_{R-1}}|u^+_R|^2\leq C_0$ with
  $C_0$ independent of $R$ is satisfied. The above inequality implies
  that it is sufficient to show only one of the two relations in
  \eqref{eq:projection-m-ge-1}, we show the second.

  \smallskip We now exploit Assumption \ref
  {ass:smallnessfrequency}. Due to \eqref {eq:smallnessfrequency},
  there exists $\delta > 0$ such that $| \omega^2 - \mu_{\lambda}|^2
  \ge \delta$ for all $\lambda=(j,m)$ with $m\geq 1$.  We therefore
  find
  \begin{align*}
    &\delta\meanint_{W_R}\left|\Pi^{\ev,+}_{m\ge 1} (
      u^+_R\,\eta)\right|^2 =\delta\sum_{\lambda=(j,m)\in I_R\atop
      m\geq 1}\left|\left\langle u_R^+\,\eta,
        U_\lambda\right\rangle_R\right|^2 \leq\sum_{\lambda=(j,m)\in
      I_R\atop m\geq 1}
    \left|(\omega^2-\mu_\lambda)\langle u_R^+\,\eta, U_\lambda\rangle_R\right|^2\displaybreak[2]\\
    &\leq \sum_{\lambda\in I_R}\left|\left\langle \omega^2
        u_R^+\,\eta, U_\lambda\right\rangle_R -\left\langle
        \mu_\lambda u_R^+\,\eta, U_\lambda\right\rangle_R \right|^2 =
    \sum_{\lambda\in I_R}\left| \left\langle \calL_0(u_R^+)\,\eta,
        U_\lambda\right\rangle_R - \left\langle \mu_\lambda
        u_R^+\,\eta, U_\lambda\right\rangle_R
    \right|^2\displaybreak[2]\\
    & = \meanint_{W_R}\left|\calL_0 (u_R^+)\eta -\sum_{\lambda\in I_R}
      \mu_\lambda\langle u^+_R\eta, U_\lambda\rangle_R\, U_\lambda
    \right|^2\leq\frac{C}{R}\,.
  \end{align*}
  In the second line we used that $\calL_0(u_R^+)=\omega^2 u_R^+$
  holds pointwise almost everywhere in $W_R$. In the last inequality
  we used \eqref{eq:solutions-and-mu-factors-weight}, exploiting the
  uniform $H^1$-bounds provided by Lemma \ref{lem:Caccioppoli}. This
  concludes the proof.
\end{proof}

\subsection{Other radiation conditions and the sesquilinear form
  $b^\pm_R$}

In this section we discuss how our radiation condition simplifies in
the case of a homogeneous medium.  We furthermore show that the
radiation condition suggested by Fliss and Joly in \cite
{FlissJoly2016} is formally stronger than our condition: Every
solution that satisfies the condition of \cite {FlissJoly2016}
satisfies also our condition. Finally, we introduce the sesquilinear
form $b^\pm_R$, which plays a major role in our proofs. The form
$b^\pm_R$ can also be used to introduce an even weaker form of the
outgoing wave condition.

\subsubsection*{In free space, Bloch expansions are Fourier
  expansions} Let us study the outgoing wave condition in a
homogeneous medium. This is our situation for $x_1<0$, indicated with
the superscript ``-''.  In a homogeneous medium, the Bloch waves are
harmonic waves. This allows to give explicit formulas for some
quantities and, in particular, for the outgoing wave condition.

\begin{remark}[Basis functions and Poynting numbers in a homogeneous medium]
  \label{rem:Left-Fourier}
  The functions $\Psi^-_{j,m}$ are $\eps$-periodic eigenfunctions of
  the operator $\calL_j^{-} = -\left(\nabla + 2\pi ij/\eps\right)
  \cdot \left(\nabla + 2\pi ij/\eps\right)$. As such, for fixed $j\in
  Z$, they are harmonic waves, $\{\Psi^-_{j,m}\,|\, m\in\N_0\} =
  \{e^{2\pi i k\cdot x/\eps}\,|\,k\in\Z^2\}$.  More precisely, for
  every $j\in Z$ and every $m\in\N$, there exists a wave-number $k =
  k(j,m)\in\Z^2$ such that
  \begin{equation}
    \label{eq:Fourierbasis}
    \Psi^-_{j,m}(x)=e^{2\pi i k\cdot x/\eps}\,,\qquad \mu^{-}_m(j)
    = 4\pi^2 \frac{|k+j|^2}{\eps^2}\,.
  \end{equation}
  Accordingly, for $\lambda=(j,m)$, we have $U^-_{\lambda}(x) =
  e^{2\pi i\left(k+j\right)\cdot x/\eps}$, and the Poynting number is
  \begin{equation*}
    P^-_\lambda = \Im\, \meanint_{Y_\eps} \bar U^-_\lambda (x)
    e_1\cdot \nabla U^-_\lambda(x)\, dx =\frac{2\pi}{\eps}\left(k_1
      + j_1\right)\,.
  \end{equation*}
  In particular, for the first energy level, $\lambda=(j,0)$, we find
  \begin{equation*}
    k=k(j,0)\in\argmin_{k\in\Z^2}{|k+j|^2}\,,
  \end{equation*}
  and thus, for $j=(j_1,j_2)$ with $j_1\neq \frac12$:
  \begin{equation}
    \label{eq:Poyntingm0}
    \frac{\eps}{2\pi}\ P^-_{(j,0)}  = j_1 + \argmin_{k_1\in\Z}{|k_1+j_1|^2}
    =
    \begin{cases}
      j_1&\quad\text{for }j_1\in[0,\frac12)\\
      j_1-1& \quad\text{for }j_1\in (\frac12,1]
    \end{cases}.
  \end{equation}
  The wave $U^-_{(j,0)}$ is right-going in the sense of
  Definition \ref{def:Projections} if and only if $j_1\in
  [0,\tfrac12)$.
\end{remark}

\begin{remark}[Outgoing wave condition in a homogeneous medium]
  \label{rem:outgoinghomogeneous}
  Let $u_R^-$ be as in Definition \ref{def:restrictionbox}.  Remark
  \ref{rem:Left-Fourier} implies that, for every index $\lambda =
  (j,m)\in I_R$, there exists an index $k=k(\lambda)\in\Z^2$ with
  $U^-_{\lambda}(x) = e^{2\pi i\left(k+j\right)\cdot x/\eps} = e^{2\pi
    iR\left(k+j\right)\cdot x/R\eps}$.  Using the shorthand notation
  $l(\lambda) := R\left(k(\lambda) + j\right) \in\Z^2$, the Bloch
  expansion of $u_R^-$ can be rewritten as a Fourier expansion,
  \begin{equation*}
    u_R^-(x)=\sum_{\lambda\in I_R}\alpha^-_{\lambda,R}U^-_\lambda(x)
    =\sum_{\lambda\in I_R}\alpha^-_{\lambda,R}\, e^{2\pi i
      l(\lambda)\cdot x / R\eps}\,.
  \end{equation*}
  By Remark \ref{rem:Left-Fourier}, the basis function
  $U_\lambda^-(x)$ is right-going in the sense of Definition
  \ref{def:Projections} if and only if $l_1(\lambda) > 0$. This
  simplifies the radiation condition: The function $u$ with the
  truncations $u_R^-$ on the left satisfies the outgoing wave
  condition \eqref{eq:outgoingleft} if and only if
  \begin{equation}
    \label{eq:outgoingfourier1}
    \sum_{\lambda\in I_R\atop l_1(\lambda)>0}|\alpha^-_{\lambda,R}|^2
    \rightarrow  0\quad\text{as }R\rightarrow \infty\,.
  \end{equation}
\end{remark}

We emphasize that, in order to evaluate the radiation condition, the
map $l : I_R\to\Z^2$ need not be evaluated (we know that it is
bijective). By expanding $u_R^-$ in a classical Fourier series,
$u_R^-(x)=\sum_{l\in\Z^2}\beta^-_{l,R}\, e^{2\pi i l\cdot x /R\eps}$
with some coefficients $\beta^-_{l,R}$, the outgoing wave condition
\eqref{eq:outgoingfourier1} is equivalent to $\sum_{l\in\Z^2\atop
  l_1>0}|\beta^-_{l,R}|^2 \rightarrow 0$ as $R\rightarrow \infty$.

\subsubsection*{Comparison to the outgoing wave condition of Fliss and
  Joly \cite{FlissJoly2016}}

We claim that the outgoing wave condition of Fliss and Joly is
formally stronger than our condition. More precisely: Every solution
$u$ that satisfies the radiation condition \eqref {eq:FlissJolyCond}
of \cite{FlissJoly2016} satisfies also our radiation condition
\eqref{eq:outgoingright} (we restrict the considerations here to the
right side $x_1\rightarrow+\infty$).  Indeed, let $u$ be as in \eqref
{eq:FlissJolyCond}, i.e.\,a finite sum of right-going Bloch-waves plus
an exponentially decaying remainder $w^+(x)$.  In order to check
\eqref{eq:outgoingright} for $u$, it suffices to verify, for each of
the finitely many terms, the smallness of its
$\Pi^+_{<0}$-projection. The smallness of the projection of $w^+(x)$
is clear because of the exponential decay of $w^+(x)$ and the
boundedness of the projection operator. The smallness of the
projection for each of the Bloch waves is shown in the subsequent
lemma.

Since the expansion \eqref {eq:FlissJolyCond} contains only vertically
periodic waves with frequency $\omega$ that are outgoing, we restrict
our analysis to such Bloch-waves $U^+_\lambda$.

\begin{lemma}[Right-going Bloch waves satisfy the radiation condition]
  \label{lem:OutgoingwaveSingleWave}
  Let $K\in\N$ denote the number of periodicity cells in vertical
  direction and let $U^+_\lambda$ be a Bloch wave with $\lambda =
  (j,m) \in Z\times\N_0$, $j_2\in Q_K
  =\{0,\frac{1}{K},...,\frac{K-1}{K}\}$, $\mu_m^+(j)=\omega^2$, and
  $P^+_\lambda>0$.  We impose that the frequency $\omega$ satisfies
  Assumption \ref{ass:smallnessfrequency} (hence $m=0$).  Then, as $\N
  K\ni R\to \infty$:
  \begin{align}
    \label{eq:projectionsinglewave1}
    \meanint_{RY_\eps}\left|\Pi^+_{< 0}((U^+_\lambda)^+_R)\right|^2
    &\rightarrow 0\,,\\
    \label{eq:projectionsinglewave2}
    \meanint_{RY_\eps}\left|\Pi^+_{\geq
        0}((U^+_\lambda)^+_R)\right|^2&\rightarrow 1\,.
  \end{align}
\end{lemma}

\begin{proof} {\it Step 1: Equivalence of \eqref
    {eq:projectionsinglewave1} and \eqref {eq:projectionsinglewave2}.}
  By $L^2$-orthogonality of the projections we can calculate
  \begin{equation*}
    1=\meanint_{RY_\eps}|(U^+_\lambda)^+_R|^2
    =\meanint_{RY_\eps}\left|\Pi^+_{\geq 0}((U^+_\lambda)^+_R)\right|^2 + 
    \meanint_{RY_\eps}\left|\Pi^+_{< 0}((U^+_\lambda)^+_R)\right|^2\,,
  \end{equation*}
  which yields the equivalence of \eqref{eq:projectionsinglewave1} and
  \eqref{eq:projectionsinglewave2}.

  {\it Step 2: Proof of \eqref{eq:projectionsinglewave1}.}  Let
  $\lambda = (j,m) = (j,0)$ be as in the lemma. Arguing exactly as in
  Lemma \ref{lem:only-m=0} we conclude that contributions from energy
  levels $m\geq 1$ are negligible,
  \begin{equation*}
    \meanint_{RY_\eps}\left|\Pi^{\ev,+}_{m\geq 1}((U_\lambda^+)^+_R)\right|^2\rightarrow 0
    \quad\text{as }R\rightarrow\infty. 
  \end{equation*}
  Consequently, for the weighted $L^2$-norm of $\Pi^+_{<
    0}((U_\lambda^+)^+_R)$ we calculate
  \begin{align*}
    \meanint_{RY_\eps}\left|\Pi^+_{< 0}((U^+_\lambda)^+_R)\right|^2
    &=\sum_{\tilde\lambda\in I_R\cap I^+_{<0}}\left|\langle
      (U_\lambda^+)^+_R, U_{\tilde\lambda}\rangle_R\right|^2=
    \sum_{\tilde\lambda\in I_R\cap I^+_{<0}\atop \tilde
      m=0}\left|\langle (U_\lambda^+)^+_R,
      U_{\tilde\lambda}\rangle_R\right|^2+ o(1)\\
    &=\sum_{\tilde\lambda\in I_R\cap I^+_{<0}\atop \tilde m=0,\,
      \tilde j_2=j_2} \left|\langle (U_\lambda^+)^+_R,
      U_{\tilde\lambda}\rangle_R\right|^2+ o(1)\quad\text{as
    }R\rightarrow\infty.
  \end{align*}
  In the last line we exploited that due to $j_2\in Q_R$ all scalar
  products with $\tilde j_2\neq j_2$ vanish.  Next we show that there
  exists a constant $C=C(\lambda)>0$ such that
  \begin{equation}
    \label{eq:estimateBlochcoefficients}
    \left|\langle (U_\lambda^+)^+_R, U_{\tilde\lambda}\rangle_R\right|\leq \frac{C}{R}
  \end{equation}
  for all $\tilde\lambda=(\tilde j,0)\in I_R\cap I^+_{<0}$ with
  $\tilde j_2=j_2$. Indeed, a direct calculation analogous to that of
  Lemma \ref{lem:orthweight} yields
  \begin{align*}
    \langle (U_\lambda^+)^+_R, U_{\tilde\lambda}\rangle_R=e^{-2\pi i
      j_1 R}\,\frac{C(\lambda,\tilde\lambda)}{R}\frac{1- e^{2\pi
        i(\tilde j_1-j_1)R}}{1-e^{2\pi i(\tilde j_1-j_1)}}
  \end{align*} 
  with
  $C(\lambda,\tilde\lambda):=\meanint_{(0,\eps)^2}\Psi^+_\lambda\bar{\Psi}^+_{\tilde\lambda}e^{2\pi
    i (\tilde j_1-j_1)y_1/\eps}dy$\,.  In particular
  $|C(\lambda,\tilde\lambda)|\leq 1$ and therefore
  \begin{align}
    \label{eq:blochcoefficientestimate}
    \left|\langle (U_\lambda^+)^+_R,
      U_{\tilde\lambda}\rangle_R\right|\leq \frac{1}{R}\left|\frac{1-
        e^{2\pi i(\tilde j_1-j_1)R}}{1-e^{2\pi i(\tilde
          j_1-j_1)}}\right| \leq\frac{1}{R}\frac{2}{\left|1-e^{2\pi
          i(\tilde j_1-j_1)}\right|}\,.
  \end{align}
  We now exploit $P^+_\lambda>0$. The eigenvalue $\mu^+_0(j)$ is
  simple, hence the Poynting number $P^+_\lambda=P^+_{(j,0)}$ is
  continuous in the wave number $j$. One thus finds a positive
  constant $\delta=\delta(\lambda)>0$ such that
  \begin{align*}
    \left|1-e^{2\pi i(\tilde j_1-j_1)}\right|> \delta
  \end{align*} 
  for all $\tilde\lambda=(\tilde j,0)\in I_R\cap I^+_{<0}$ with
  $\tilde j_2=j_2$.  Together with \eqref{eq:blochcoefficientestimate}
  this yields the claim \eqref{eq:estimateBlochcoefficients} with
  $C=\frac{2}{\delta}$.

  With estimate \eqref{eq:estimateBlochcoefficients} at hand we
  conclude
  \begin{align*}
    \meanint_{RY_\eps}\left|\Pi^+_{< 0}((U^+_\lambda)^+_R)\right|^2
    &\leq \frac{C^2}{R^2}\left|\left\{\tilde\lambda=(\tilde j,0)\in
        I_R\cap I^+_{<0}
        \text{ with } \tilde j_2=j_2\right\}\right|+o(1)\\
    &\leq\frac{C^2}{R}+ o(1)\rightarrow 0\quad\text{as }R\rightarrow
    \infty\,,
  \end{align*}
  which was the claim.
\end{proof}

\subsubsection*{The sesquilinear form $b^\pm_R$}

Let $\eta = \eta_R$ be a family of cut-off functions as in \eqref
{eq:cutofffct}.  In view of Relation \eqref {eq:projectiontruncated}
of Lemma \ref{lem:almost-psi}, the outgoing wave conditions
\eqref{eq:outgoingright} and \eqref{eq:outgoingleft} are equivalent to
outgoing wave conditions for the truncated functions $u^\pm_{R,\eta}
:= u^\pm_R\,\eta$. More precisely, they are equivalent to the
conditions
\begin{equation}
  \label{eq:outgoingtruncated}
  \meanint_{W_R}\left|\Pi^+_{<0}(u^+_{R,\eta})\right|^2\rightarrow 0\quad
  \text{ and } \quad
  \meanint_{W_R}\left|(\Pi^-_{>0}(u^-_{R,\eta})\right|^2\rightarrow 0\quad
  \text{ as }\quad  R\to \infty\,.
\end{equation}
In the proof of our uniqueness result we will use \eqref
{eq:outgoingtruncated} instead of the original conditions
\eqref{eq:outgoingright} and \eqref{eq:outgoingleft}. In fact, even a
weaker form of the conditions is sufficient and we discuss this
relaxation in the following.

Corresponding to the energy flux definition in \eqref
{eq:Poynting-def}, we associate to a function $w\in H^1(W_R;\C)$ on
$W_R = R Y_\eps$ the Poynting number
\begin{equation}
  \label{eq:Poynting-def-with-R}
  B_R^+(w) := \Im\, \meanint_{W_R} \bar w(x)
  e_1\cdot \left[a^\eps(x)\nabla w(x)\right]\, dx\,.
\end{equation}
The quadratic expression $B_R^-$ is defined analogously,
with $a^\eps(x)$ replaced by $1$.

\begin{definition}[Weaker form of the outgoing wave condition]
  \label{def:weakoutgoingwave}
  For $K, R\in\N$ with $R\in K \N$ we consider $u\in H^1_\loc(\R\times
  (0,\eps K);\C)$ and $u^\pm_{R,\eta}$ as in \eqref
  {eq:outgoingtruncated}. We say that $u$ satisfies the energetic
  outgoing wave condition on the right, if
    \begin{equation}
      \label{eq:weakoutgoingright}
      B_R^+\left(\Pi^+_{<0}\Pi^{{\ev},+}_{m=0}(u^+_{R,\eta})\right)\rightarrow 0
      \text{ as }  R\to \infty\,.
    \end{equation}
    Accordingly, we say that $u$ satisfies the energetic outgoing wave
    condition on the left, if
    \begin{equation}
      \label{eq:weakoutgoingleft}
       B_R^-\left(\Pi^-_{>0}\Pi^{{\ev},-}_{m=0}(u^-_{R,\eta})\right)\to 0
      \text{ as } R\to \infty\,.
    \end{equation}
\end{definition}

In two respects, the condition \eqref {eq:weakoutgoingright} is
similar to the condition \eqref{eq:outgoingtruncated}: the function
$u$ is considered at the far right since only $u^+_{R,\eta}$ is
used. In view of Lemma \ref{lem:only-m=0}, contributions from energy
levels $m\geq 1$ can be neglected and we consider only
$\Pi^{{\ev},+}_{m=0}(u^+_{R,\eta})$.  Furthermore, this function
is projected to left-going waves, i.e.\,only
$\Pi^+_{<0}\Pi^{{\ev},+}_{m=0}(u^+_{R,\eta})$ is studied. The
main difference between the two conditions is that, instead of looking
at the weighted $L^2$-norm, one demands in \eqref
{eq:weakoutgoingright} a decay property for the energy-flux quantity
$B_R^+$.  At the end of this section, we will see that condition
\eqref{eq:outgoingtruncated} (together with the uniform $L^2$-bounds
and the solution property) implies \eqref{eq:weakoutgoingright}.

The definition of $B_R^+$ in \eqref {eq:Poynting-def-with-R} suggests
to introduce additionally the (nonsymmetric) sesquilinear forms
$b_R^\pm : L^2(W_R;\C) \times H^1(W_R;\C)\rightarrow\C$,
\begin{equation}
  \label{eq:bpm-def}
  \begin{split}
    b^+_R(u,v)&:=\meanint_{W_R}\bar u(x)\,
    e_1\cdot\left[a^\eps(x)\nabla v(x)\right]\,dx\,,\\
    b^-_R(u,v)&:=\meanint_{W_R}\bar u(x)\,e_1\cdot\nabla v(x)\,dx\,.
  \end{split}
\end{equation}
The definition is tailored to calculate energy fluxes. The energy flux
of the left-going contributions of $u^+_{R,\eta}$ (in the right
half-plane) is quantified by
\begin{align*}
  B_R^+(\Pi^+_{<0} u^+_{R,\eta}) &= \Im\,
  b^+_R\left(\Pi^+_{<0}u^+_{R,\eta},\Pi^+_{<0}u^+_{R,\eta}\right) \\
  &= \Im\,
  \meanint_{W_R}\overline{\Pi^+_{<0}u^+_{R,\eta}(x)}e_1\cdot
  \left[a^\eps(x)\nabla(\Pi^+_{<0}u^+_{R,\eta})(x)\right]\,dx\,.
\end{align*}
The connection to $ P^\pm_\lambda$ is given by
\begin{align}
  \label{eq:relationbS-2}
  P^\pm_\lambda 
  = B_R^\pm(U^\pm_\lambda)
  = \Im\, b_R^\pm\left(U^\pm_\lambda, U^\pm_\lambda\right)\,.
\end{align}

Let us collect some properties of the sesquilinear forms $b_R^\pm$.

\begin{lemma}[Properties of the sesquilinear form $b_R^\pm$]
  \label{lem:orthogonalwavenumber}
  For $R\in\N$, the following holds:

  {\em 1.  Orthogonality property of $b^\pm_R$.}  Let $\lambda,
  \tilde\lambda\in I_R$ be such that $\lambda=(j,m)$,
  $\tilde\lambda=(\tilde j,\tilde m)$ with $j\neq \tilde j$.  Then
  $U^\pm_\lambda,U^\pm_{\tilde\lambda}$ of
  \eqref{eq:abbreviationUlambda} satisfy
  \begin{align}
    \label{eq:orthogonalwavenumber}
    b^\pm_R(U_\lambda^\pm, U_{\tilde\lambda}^\pm) = 0 \,.
  \end{align}

  {\em 2. Convergence property of $b^\pm_R$.}  Let sequences
  $u_R\in L^2(W_R;\C)$ and $v_R\in H^1(W_R;\C)$ be such that
  \begin{align}
    \label{eq:uniformbounds1}
    \meanint_{W_R}|u_R|^2 + |\nabla v_R|^2\leq C_0
  \end{align}
  with $C_0$ independent of $R$. Let either
  $\meanint_{W_R}|u_R|^2\rightarrow 0$ or $\meanint_{W_R}|\nabla
  v_R|^2\rightarrow 0$ as $R\rightarrow \infty$.  Then there holds
  \begin{equation}
    b^\pm_R(u_R,v_R)\to 0\,.
  \end{equation}
\end{lemma}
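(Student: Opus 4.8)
The plan is to treat the two assertions separately: the orthogonality in part~1 is an exact cancellation of oscillatory integrals, while the convergence in part~2 is a soft Cauchy--Schwarz estimate.

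For part~1, I would insert the explicit form $U^\pm_\lambda(x)=\Psi^\pm_\lambda(x)\,e^{2\pi i j\cdot x/\eps}$ (and the analogous expression for $\tilde\lambda$ with phase $\tilde j$) into the definition of $b^\pm_R$. Applying the product rule to the gradient produces
\[
  \nabla U^\pm_{\tilde\lambda}=\Big(\nabla\Psi^\pm_{\tilde\lambda}+\tfrac{2\pi i\tilde j}{\eps}\Psi^\pm_{\tilde\lambda}\Big)\,e^{2\pi i\tilde j\cdot x/\eps},
\]
so that, after collecting all $\eps$-periodic pieces (here I use that $a^\eps$, $\Psi^\pm_\lambda$ and $\Psi^\pm_{\tilde\lambda}$ are $\eps$-periodic, and that derivatives of $\eps$-periodic functions are again $\eps$-periodic) into a single $\eps$-periodic function $g$, the integrand becomes $g(x)\,e^{2\pi i(\tilde j-j)\cdot x/\eps}$. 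It then remains to show that $\meanint_{W_R} g(x)\,e^{2\pi i(\tilde j-j)\cdot x/\eps}\,dx=0$ whenever $j\neq\tilde j$.

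For this key step I would expand the $\eps$-periodic function $g$ in a Fourier series $g(x)=\sum_{n\in\Z^2}c_n\,e^{2\pi i n\cdot x/\eps}$ and integrate term by term over the square $W_R=(0,R\eps)^2$. Each term factors into one-dimensional integrals $\int_0^{R\eps}e^{2\pi i(n_k+\tilde j_k-j_k)x_k/\eps}\,dx_k$. The decisive observation is that, since $j,\tilde j\in Q_R\times Q_R$, the differences $\tilde j_k-j_k$ are integer multiples of $1/R$, hence $R(n_k+\tilde j_k-j_k)\in\Z$; consequently every such one-dimensional integral vanishes unless $n_k+\tilde j_k-j_k=0$. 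In the component where $j$ and $\tilde j$ differ — say $j_1\neq\tilde j_1$ — this would force $n_1=j_1-\tilde j_1\in(-1,1)\setminus\{0\}$, which is impossible for $n_1\in\Z$. Therefore every Fourier mode integrates to zero and $b^\pm_R(U^\pm_\lambda,U^\pm_{\tilde\lambda})=0$ follows (the case of $b^-_R$ is identical with $a^\eps$ replaced by $1$). This phase-counting — the verification that the offset $\tilde j-j$ can never be compensated by an integer Fourier mode in the differing direction, which in turn hinges on $W_R$ consisting of exactly $R$ full periods per direction — is the only genuine subtlety in the lemma.

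For part~2, the boundedness of the coefficient, $|a^\eps|\le a^*$, together with the Cauchy--Schwarz inequality on $W_R$ taken with respect to the probability measure $|W_R|^{-1}\,dx$, yields
\[
  |b^+_R(u_R,v_R)|\le a^*\meanint_{W_R}|u_R|\,|\nabla v_R|
  \le a^*\Big(\meanint_{W_R}|u_R|^2\Big)^{1/2}\Big(\meanint_{W_R}|\nabla v_R|^2\Big)^{1/2},
\]
and the analogous bound for $b^-_R$ with $a^*$ replaced by $1$. Under the uniform bound \eqref{eq:uniformbounds1}, one of the two factors stays bounded by $C_0^{1/2}$ while the other tends to zero by hypothesis, so $b^\pm_R(u_R,v_R)\to 0$. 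This part is entirely routine and I do not expect any obstacle.
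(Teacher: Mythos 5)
Your proposal is correct, and Part 2 coincides with the paper's argument (Cauchy--Schwarz with the weighted measure, one factor bounded, the other vanishing). In Part 1 the reduction is also the same as in the paper -- insert the explicit forms, collect all $\eps$-periodic factors into a single weight $g$, and reduce to showing $\int_{W_R} g(x)\,e^{2\pi i(\tilde j-j)\cdot x/\eps}\,dx=0$ -- but your mechanism for the cancellation differs. The paper delegates this step to its appendix Lemma \ref{lem:orthweight}: the integral in the coordinate where $j$ and $\tilde j$ differ is split into the $R$ periodicity cells, periodicity of the weight factors out a single cell integral, and the remaining geometric sum $\sum_{k=0}^{R-1}e^{2\pi i(j-\tilde j)k}$ vanishes because $R(j-\tilde j)\in\Z$ while $e^{2\pi i(j-\tilde j)}\neq 1$. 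You instead expand $g$ in a Fourier series and integrate term by term, ruling out every mode by the phase-counting argument $n_k=j_k-\tilde j_k\in(-1,1)\setminus\{0\}$, which is impossible for integers. Both proofs hinge on exactly the same arithmetic ($R(\tilde j-j)\in\Z^2$ together with $|j_k-\tilde j_k|<1$), so the approaches are close in spirit; what the paper's version buys is that it needs only integrability of the periodic weight and is packaged as a standalone lemma reused elsewhere (orthogonality of the vertical pre-Bloch projection, proof of Theorem \ref{thm:SupportBlochmeasure}). Your version needs one extra word of justification that you omit: term-by-term integration of the Fourier series of $g$ against the bounded exponential requires, say, $L^2$-convergence of that series, i.e.\ $g\in L^2(Y_\eps)$. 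This does hold here -- the Bloch eigenfunctions are bounded by elliptic regularity (and on the left-hand side they are exponentials) -- but it should be stated, since for a merely integrable weight the interchange is not automatic.
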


\begin{proof}
  1. We prove \eqref{eq:orthogonalwavenumber} for
  $U^+_\lambda,U^+_{\tilde\lambda}$, the argument for $U^-_\lambda,
  U^-_{\tilde\lambda}$ is analogous. We have to show that
  \begin{align*}
    b_R^+(U_\lambda^+, U^+_{\tilde\lambda})
    =\meanint_{W_R}\overline{U_\lambda^+}(x)\,e_1
    \cdot\left[a^\eps(x)\nabla U^+_{\tilde\lambda}(x)\right]\,dx
    \stackrel{!}{=} 0\,.
  \end{align*}
  By definition of $U^+_\lambda$ and $U^+_{\tilde\lambda}$ there holds
  \begin{align*}
    \overline{U^+_\lambda}(x)&=\overline{\Psi^+_\lambda}(x)e^{-i 2\pi j\cdot x/\eps},\\
    \nabla
    U^+_{\tilde\lambda}(x)&=\left[\nabla\Psi^+_{\tilde\lambda}(x) +
      (i 2\pi \tilde j/\eps)\, \Psi^+_{\tilde\lambda}(x)\right] e^{i
      2\pi\tilde j\cdot x/\eps}
  \end{align*}
  with $\eps$-periodic functions $\Psi^+_\lambda$,
  $\Psi^+_{\tilde\lambda}$, and $\nabla \Psi^+_{\tilde\lambda}$.  Due
  to the $\eps$-periodicity of $a^\eps$ and since $j, \tilde j\in Q_R$
  satisfy $j\neq \tilde j$, we can apply Lemma \ref{lem:orthweight} of
  the appendix, which yields the claim.

  \smallskip 2. We show the claim for $b_R^+$, the argument for
  $b_R^-$ is analogous. The Cauchy-Schwarz inequality allows to
  calculate
  \begin{align*}
    &\left|b^+_R(u_R,v_R)\right|
    =\left|\meanint_{W_R}\bar u_R(x)\,e_1\cdot
      \left[a^\eps(x)\nabla v_R(x)\right]\,dx\right|\\
    &\quad\leq\|a^\eps\|_{\infty}\left(\meanint_{W_R}|u_R|^2\right)^{1/2}
    \left(\meanint_{W_R}\left|\nabla
        v_R\right|^2\right)^{1/2} \rightarrow 0\quad\text{as
    }R\rightarrow\infty\,,
  \end{align*}
  which concludes the proof.
\end{proof}

Lemma \ref{lem:orthogonalwavenumber} shows that the outgoing wave
condition \eqref{eq:outgoingtruncated} together with the $L^2$-bounds
of Definition \ref{def:outgoingwave} imply \eqref
{eq:weakoutgoingright}. Indeed, by \eqref{eq:outgoingtruncated}, there
holds
\begin{align*}
  \meanint_{W_R}\left|\Pi^+_{<0}\Pi^{{\ev},+}_{m=0}(u^+_{R,\eta})\right|^2
  \leq\meanint_{W_R}\left|\Pi^+_{<0}(u^+_{R,\eta})\right|^2\rightarrow 0
  \text{ as } R\to \infty\,.
\end{align*}
Moreover, $\Pi^+_{<0}\Pi^{{\ev},+}_{m=0}(u^+_{R,\eta})$ satisfies
$\meanint_{W_R}|\nabla
\left(\Pi^+_{<0}\Pi^{{\ev},+}_{m=0}(u^+_{R,\eta})\right)|^2\leq C$
with $C$ independent of $R$ due to Lemma \ref{lem:Caccioppoli} and
Lemma \ref{lem:RegularityProjectionsev} of the appendix. Lemma
\ref{lem:orthogonalwavenumber} provides
\begin{align*}
  B_R^+\left(\Pi^+_{<0}\Pi^{{\ev},+}_{m=0}(u^+_{R,\eta})\right)
  =\Im\,b^+_R
  \left(\Pi^+_{<0}\Pi^{{\ev},+}_{m=0}(u^+_{R,\eta}),
    \Pi^+_{<0}\Pi^{{\ev},+}_{m=0}(u^+_{R,\eta})\right)
  \rightarrow 0\text{ as } R\to \infty\,,
\end{align*}
and hence \eqref{eq:weakoutgoingright}.

\begin{remark}[On the sesquilinear form $b_R^\pm$]
  Another choice of a bilinear form is
  \begin{equation}
    \label{eq:b-alt}
    \tilde b^+_R(u,v) := \frac12 \meanint_{W_R}
    \left\{ \bar u(x)\, e_1\cdot\left[a^\eps(x)\nabla v(x)\right]
      \phantom{I^{I^I}}\hspace*{-6mm} - 
      v(x)\, e_1\cdot\left[a^\eps(x)\nabla \bar u(x)\right] \right\}
      \,dx\,.    
  \end{equation}
  With this choice, the energy flux $B^\pm_R$ can be calculated as
  before, since $\Im\, b^+_R(u,u) = \Im\, \tilde b^+_R(u,u)$ holds for
  every $u$. The properties of Lemma \ref {lem:orthogonalwavenumber}
  remain also unchanged, the only additional requirement would be an
  $H^1$-bound also for $u_R$ in \eqref {eq:uniformbounds1}.

  The advantage of $\tilde b^+_R(u,v)$ is that more orthogonality can
  be expected for $\tilde b^+_R$ than for $b^+_R$. Essentially, the
  bilinear form $q$ of (27) in \cite{FlissJoly2016} coincides with
  $\tilde b^+_R$ (up to a factor $2$, our coefficient $a^\eps$, and
  the fact that we use a volume integral for the averaging). In
  Theorem 3 of \cite{FlissJoly2016}, an orthogonality property is
  shown for $q$, which resembles our orthogonality relation \eqref
  {eq:orthogonalwavenumber}, stating that orthogonality holds also for
  $\lambda=(j,m)$ and $\tilde\lambda=(j,\tilde m)$ with $m\neq \tilde
  m$. Unfortunately, such an orthogonality is only true for
  basis functions corresponding to the same frequency $\omega$, while
  our analysis of an interface would require orthogonality independent
  of the frequency.
\end{remark}

\section{Bloch measures and uniqueness properties}
\label{sec.uniqueness}

Our aim is to show uniqueness properties of the transmision Problem
\ref{prob:Transmission} with incoming wave $U_\inc$ and outgoing wave
conditions. Following the standard procedure of uniqueness proofs, we
consider two solutions $u$ and $\tilde u$ of the problem. Due to
linearity of the system, the difference $v := u - \tilde u$ satisfies
again \eqref{eq:Peps}. Furthermore, it satisfies outgoing wave
conditions on the left and on the right according to Definition
\ref{def:outgoingwave}, without any incoming wave $U_\inc$. At this
point, we have exploited the triangle inequality: Certain projections
of $u$ and $\tilde u$ tend to zero in a weighted $L^2$-norm, hence
also the projections of $v$ tend to zero. We can not show that $v$
vanishes (indeed, as explained in the introduction, we expect that
there exist nontrivial solutions for vanishing $U_\inc$). But we can
show that the functions $v^\pm_R$ consist, in the limit
$R\rightarrow\infty$, only of vertical waves.  The right object to
study is the {\it Bloch measure} associated with $v^\pm_R$.

We recall that the frequency assumption \eqref{eq:smallnessfrequency}
implies that, in the limit $R\rightarrow\infty$, the discrete Bloch
expansions of $u^\pm_R$ contain only modes corresponding to $\lambda
= (j,m)$ with $m=0$, see Lemma \ref{lem:only-m=0}.

\subsection{Bloch measures}

In the definition of Bloch measures we use the space of all Radon
measures on the unit square $Z$, which we denote as $\calM(Z)$. It is
the dual of the space of continuous functions on $Z$ and accordingly
equipped with the topology of weak-$*$ convergence. This means, in
particular, that every bounded sequence in $\calM(Z)$ has a convergent subsequence.

\begin{definition}[Discrete Bloch measure]
  \label{def:Blochmeasure}
  Let $u_R\in L^2(W_R;\C)$ be a sequence of functions with discrete
  Bloch-expansions
  \begin{align*}
    u_R(x)=\sum_{\lambda\in I_R} \alpha^\pm_\lambda U^\pm_\lambda(x)\,,
  \end{align*}
  where $\alpha^\pm_\lambda = \alpha^\pm_\lambda(R)$ depend on $R\in
  \N$.  Given these coefficients, for fixed $l\in\N_0$, we define the
  $l$-th discrete Bloch-measure $\nu_{l,R}^\pm \in \calM(Z)$ by
  \begin{align}
	 \label{eq:Blochmeasure1}
    \nu_{l,R}^\pm := \sum_{\lambda = (j,l) \in I_R}
    |\alpha_\lambda^\pm|^2\,\ \delta_{j}\,,
  \end{align}
  where $\delta_{j}$ denotes the Dirac measure at the frequency $j\in
  Z$.
\end{definition}

For $u_R$ fixed, $\nu_{l,R}^\pm$ is a non-negative Radon measure on $Z
= [0,1]^2$.  There holds
\begin{align}
  \label{eq:Blochmeasure2}
  \sum_{l=0}^\infty \int_{Z}\, d\nu_{l,R}^\pm =\sum_{\lambda\in I_R}
  |\alpha_\lambda^\pm|^2 =\meanint_{W_R} |u_R|^2\,.
\end{align}
Our aim is to study the limiting behavior $R\rightarrow \infty$ of the
discrete Bloch measures $\nu_{l,R}^\pm$.

\begin{definition}[Bloch measure]
  \label{def:limitingBlochmeasure}
  For $\eps>0$, $K\in\N$ and $h = K\eps$, let $u$ be a function $u \in
  L^2_\loc(\R\times (0,h);\C)$. We consider a sequence $\N K \ni R\to
  \infty$. We extract $u^\pm_{R,\eta}:=u^\pm_R\,\eta$ according to
  Definition \ref {def:restrictionbox} with a sequence of cut-off
  functions $\eta=\eta_R$ as in \eqref {eq:cutofffct}. For $l\in\N_0$,
  let $\nu^\pm_{l,R}$ be the discrete Bloch measures associated with
  $u^\pm_{R,\eta}$.

  We say that the measure $\nu^\pm_{l,\infty}\in \calM(Z)$ is a Bloch
  measure generated by $u$ if there holds, along a subsequence
  $R\to\infty$, in the sense of measures (i.e.\,weak-$*$),
  \begin{equation}
    \label{eq:Bloch-measure-conv}
    \nu_{l,R}^\pm \to \nu^\pm_{l,\infty}\,.
  \end{equation}
\end{definition}

Relation \eqref {eq:Bloch-measure-conv} is equivalent to the
following: for every test-function $\phi\in C(Z)$ on $Z=[0,1]^2$ there
holds
\begin{equation*}
  \sum_{\lambda = (j,l) \in I_R} \phi(j) |\alpha_\lambda^\pm|^2
  =  \int_{Z}\phi\,d\nu_{l,R}^\pm \to \int_{Z}\phi\,d\nu^\pm_{l,\infty}
  \quad\text{as } R\to\infty\,.
\end{equation*}

The methods of Section \ref{ssec.Bloch-uniqueness} force us to work
with cut-off functions, i.e.\,with $u^\pm_{R,\eta}$ and with
$\nu^\pm_{l,R}$. Nevertheless, one may also study the discrete Bloch
measures $\tilde\nu^\pm_{l,R}$ associated to $u^\pm_R$ (without
cut-off function). In the limit $R\rightarrow \infty$, the two
measures coincide,
\begin{equation*}
  \nu^\pm_{l,R} - \tilde\nu^\pm_{l,R}\to 0
\end{equation*}
in the sense of measures.  In particular, the Bloch measure generated
by $u^\pm_{R,\eta}$ is independent of the choice of $\eta$.

\subsection{Energy estimates and consequences for the Bloch measure}

Up to this point (with the exception of Lemma \ref {lem:only-m=0}),
our considerations have been completely abstract in the following
sense: Given a function $u \in L^2_\loc(\R\times (0,h);\C)$, we have
constructed restrictions of $u$ to large boxes, projections of these
restrictions, and finally discrete and limiting Bloch measures
corresponding to $u$. Except for regularity properties, we have not
exploited the Helmholtz equation. In this section, we will derive
relations that express a physical law: energy conservation. This will
eventually lead us to the uniqueness properties which are expressed
with the Bloch measures.

The subsequent result states that, while left-going waves on the right
vanish by the outgoing wave condition, right-going waves vanish by
energy conservation.

\begin{proposition}
  \label{prop:weightedsumalpha}
  Let Assumption \ref{ass:smallnessfrequency} on $\omega>0$ be
  satisfied and let $v$ be a solution to the scattering problem
  \eqref{eq:Peps}, periodic in vertical direction, satisfying outgoing
  wave conditions on the left and on the right according to Definition
  \ref{def:outgoingwave}, without incoming wave, i.e.\, $U_\inc \equiv
  0$. For a sequence of cut-off functions $\eta=\eta_R$ as in \eqref
  {eq:cutofffct} we consider $v_{R,\eta}^\pm := v_{R}^\pm\,\eta_R =
  \sum_{\lambda\in I_R} \alpha_{\lambda,R}^\pm U_\lambda^\pm$,
  c.f.\,Definition \ref{def:restrictionbox}. Then, as $\N K\ni R\to
  \infty$,
  \begin{align}\label{eq:result-P-weighted-sum}
    \sum_{\lambda=(j,0)\atop\lambda\in I_{R}\cap I^-_{\leq 0}}
    |\alpha^-_{\lambda,R}|^2\   P^-_\lambda\to 0\quad\text{and}\quad
    \sum_{\lambda=(j,0)\atop\lambda\in I_{R}\cap I^+_{\ge 0}}|\alpha^+_{\lambda,R}|^2\ 
     P^+_\lambda  \to 0\,.
  \end{align}
\end{proposition}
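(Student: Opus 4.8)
The plan is to read the two sums as horizontal energy fluxes and to play the outgoing wave conditions against energy conservation. Writing $J := \Im(\bar v\, a^\eps \nabla v)$ for the Poynting vector of the true solution $v$, the Helmholtz equation $\calL_0 v = \omega^2 v$ together with the reality of $a^\eps$ and $\omega^2$ gives $\div J = \Im(a^\eps|\nabla v|^2 - \omega^2|v|^2) = 0$. Since $v$ is $h$-periodic in $x_2$, the cross-sectional flux $x_1 \mapsto \int_0^h J_1(x_1,\cdot)$ has vanishing derivative and is therefore a constant $F$, independent of $x_1\in\R$. Evaluating the averaged first component on the boxes $W_R$ yields $B_R^+(v_R^+) = B_R^-(v_R^-) = F/h$ for every $R$; this is the quantitative form of energy conservation that links the far right and the far left.

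Next I would pass from $v_R^\pm$ to the truncated functions $v_{R,\eta}^\pm$ carrying the Bloch coefficients $\alpha_{\lambda,R}^\pm$. The difference $B_R^\pm(v_{R,\eta}^\pm) - B_R^\pm(v_R^\pm)$ is supported on the boundary strip $W_R\setminus W_{R-1}$ (where $\nabla\eta\neq 0$ or $\eta\neq 1$), and the uniform $L^2$- and $H^1$-bounds there (from Definition \ref{def:outgoingwave} and Lemma \ref{lem:Caccioppoli}) make it vanish as $R\to\infty$. Hence $B_R^\pm(v_{R,\eta}^\pm) \to F/h$.

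Then I would expand $B_R^\pm(v_{R,\eta}^\pm) = \Im\, b_R^\pm(v_{R,\eta}^\pm, v_{R,\eta}^\pm)$ in the Bloch basis. Splitting $v_{R,\eta}^\pm = \Pi^{\ev,\pm}_{m=0}(v_{R,\eta}^\pm) + \Pi^{\ev,\pm}_{m\ge 1}(v_{R,\eta}^\pm)$ and using the orthogonality property (part 1 of Lemma \ref{lem:orthogonalwavenumber}), the $m=0$ part contributes exactly $\sum_{\lambda=(j,0)}|\alpha_{\lambda,R}^\pm|^2\, P^\pm_\lambda$, since for each $j$ only a single $m=0$ mode survives. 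The remaining cross terms and the $m\ge 1$ self term vanish by the convergence property (part 2 of Lemma \ref{lem:orthogonalwavenumber}), provided the $m\ge 1$ part is small in $H^1$. Its $L^2$-smallness is Lemma \ref{lem:only-m=0}; the gradient smallness follows by sharpening that estimate, since the spectral gap of Assumption \ref{ass:smallnessfrequency} gives $\mu_\lambda \le C|\omega^2 - \mu_\lambda|$ for $m\ge 1$, and hence $\meanint_{W_R}|\nabla \Pi^{\ev,\pm}_{m\ge 1}(v_{R,\eta}^\pm)|^2 \le C\sum_{m\ge 1}\mu_\lambda|\alpha_{\lambda,R}^\pm|^2 \to 0$. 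Thus $\sum_{\lambda=(j,0)}|\alpha_{\lambda,R}^\pm|^2\, P^\pm_\lambda \to F/h$.

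Finally I would insert the outgoing wave conditions. On the right, \eqref{eq:outgoingright} gives $\sum_{\lambda\in I_R\cap I^+_{<0}}|\alpha_{\lambda,R}^+|^2 \to 0$, so (the $m=0$ Poynting numbers being bounded uniformly in $j$) the $P^+_\lambda<0$ part of the sum is negligible and the full $m=0$ sum agrees, up to $o(1)$, with $S_R^+ := \sum_{\lambda=(j,0)\in I_R\cap I^+_{\ge 0}}|\alpha^+_{\lambda,R}|^2\, P^+_\lambda \ge 0$; symmetrically \eqref{eq:outgoingleft} yields $S_R^- := \sum_{\lambda=(j,0)\in I_R\cap I^-_{\le 0}}|\alpha^-_{\lambda,R}|^2\, P^-_\lambda \le 0$ up to $o(1)$. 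Both therefore converge to the common limit $F/h$, while $S_R^+\ge 0$ forces $F/h\ge 0$ and $S_R^-\le 0$ forces $F/h\le 0$. Hence $F=0$ and both sums tend to $0$, which is the assertion. The step I expect to be the main obstacle is the third one: controlling the $m\ge 1$ contributions inside the non-symmetric form $b_R^\pm$, which is precisely where the frequency smallness of Assumption \ref{ass:smallnessfrequency} enters, upgrading the $L^2$-smallness of Lemma \ref{lem:only-m=0} to the $H^1$-smallness needed for the convergence property of $b_R^\pm$.
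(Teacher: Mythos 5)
Your proposal is correct, and its overall architecture coincides with the paper's proof: energy conservation linking the fluxes at far left and far right, passage to the truncated functions $v^\pm_{R,\eta}$, reduction to the $m=0$ band, diagonalization of $\Im\,b_R^\pm$ via the wave-number orthogonality of Lemma \ref{lem:orthogonalwavenumber}, elimination of the wrong-direction modes by the outgoing wave conditions, and the final sign argument ($S_R^+\ge 0$, $S_R^-\le 0$, common limit) to force both sums to zero. Three local steps are handled differently, and all three variants are sound. First, you derive energy conservation from the pointwise identity $\div J=0$ and constancy of the cross-sectional flux $F$, whereas the paper tests the weak formulation with $\vartheta\,\bar v$ for a trapezoidal cut-off $\vartheta$; these are the same computation in different packaging, and your explicit identification of the common value $F/h$ is a pleasant bonus (the paper only uses that the two fluxes are equal). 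Second, for the $H^1$-smallness of $\Pi^{\ev,\pm}_{m\ge 1}(v^\pm_{R,\eta})$ — which you correctly identify as the crux — you give a spectral argument: $\meanint_{W_R} a^\eps|\nabla \Pi^{\ev}_{m\ge 1}w|^2=\sum_{m\ge 1}\mu_\lambda|\alpha_\lambda|^2$ plus the gap bound $\mu_\lambda\le C(\mu_\lambda-\omega^2)$. The paper instead invokes Lemma \ref{lem:RegularityProjectionsev}(2), an integration-by-parts/interpolation estimate. Your route works, but note that the final ``$\to 0$'' is not immediate from the gap bound alone: you need $\sum_{m\ge 1}(\mu_\lambda-\omega^2)|\alpha_{\lambda,R}|^2\to 0$, which follows by combining $(\mu_\lambda-\omega^2)\le \delta^{-1}(\mu_\lambda-\omega^2)^2$ (or Cauchy--Schwarz) with the key estimate \eqref{eq:solutions-and-mu-factors-weight} of Lemma \ref{lem:almost-psi} — i.e.\ exactly the ``sharpening'' of Lemma \ref{lem:only-m=0} you allude to; this gap is easily filled within the paper's toolkit. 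Third, to discard the $I^+_{<0}$, $m=0$ part of the flux sum you use the $L^2$-form \eqref{eq:outgoingright} of the radiation condition together with the uniform boundedness of $j\mapsto P^+_{(j,0)}$ (valid, since this function is continuous on the compact set $Z$), whereas the paper splits off the $\Pi^+_{<0}\Pi^{\ev}_{m=0}$ self-term by orthogonality and uses only the weaker \emph{energetic} condition \eqref{eq:weakoutgoingright}. Your version is slightly more direct; the paper's arrangement buys the observation, emphasized there, that the energetic outgoing wave condition alone already suffices for the conclusion.
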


\begin{proof}
  {\em Step 1: Energy flux equality.}  For $h = \eps K$ and $R \in \N
  K$, we consider the special cut-off function $\vartheta(x) =
  \vartheta_R(x)$, defined for $x = (x_1, x_2)$ as
  \begin{align*}
    \vartheta(x) :=
    \begin{cases}
        1 & \text{ if } |x_1|\leq \eps R\,,\\
        2-\tfrac{|x_1|}{\eps R}& \text{ if } \eps R < |x_1| < 2\eps R\,,\\
        0 & \text{ if } |x_1|\geq 2\eps R\,.
    \end{cases}
  \end{align*}
  We multiply the Helmholtz equation \eqref{eq:Peps} with coefficients
  $a = a^\eps$ and solution $v$ by the test-function $\vartheta(x)
  \,\overline{v}(x)$. An integration over $\R\times(0,h)$ and
  integration by parts yields (no boundary terms appear due to
  periodicity in $x_2$-direction and compact support):
  \begin{align*}
    \int_\R\int_0^h 
    \left\{ a^\eps\, \vartheta\, \left|\nabla v\right|^2 
      + a^\eps\, \del_{x_1} \vartheta\ \overline{v}\ \del_{x_1} v
    \right\}
    = \omega^2\int_\R\int_0^h \vartheta\, |v|^2\,.
  \end{align*}
  Due to the special choice of $\vartheta$ and $a^\eps(x) = 1$ for
  $x_1<0$, this equation reads
  \begin{equation*}
    \meanint_{-2R\eps}^{-R\eps}\int_0^h\overline{v}\, \del_{x_1}v
    -\meanint_{R\eps}^{2R\eps}\int_0^h\overline{v}\,a^\eps\del_{x_1}v
    = \int_\R\int_0^h 
    \left\{ \omega^2\vartheta\left|v\right|^2
      - a^\eps\,\vartheta\,\left|\nabla v\right|^2\right\}\,.
  \end{equation*}
  On the left-hand side, we recognize the sesquilinear forms $b^\pm_R$
  of \eqref {eq:bpm-def}.  Because of periodicity in $x_2$-direction,
  we may write
  \begin{equation}    \label{eq:testing1}
    h\ \left[ b_R^-\left(v^-_R, v^-_R\right) -
      b_R^+\left(v^+_R, v^+_R\right) \right] =
    \int_\R\int_0^h \left\{ \omega^2\vartheta\left|v\right|^2 -
    a^\eps\, \vartheta\, \left|\nabla v\right|^2 \right\}\,.
  \end{equation}
  Since the right hand side is real, taking the imaginary part of
  \eqref{eq:testing1} yields
  \begin{align}
    \label{eq:testing2}
    \Im\, b_R^-\left(v^-_R, v^-_R\right) - \Im\,
    b_R^+\left(v^+_R, v^+_R\right) = 0 \,.
  \end{align}
  Relation \eqref{eq:testing2} is an energy conservation: The energy
  flux into the domain from the left must coincide with the energy
  flux out of the domain at the right.

  \smallskip {\em Step 2: Truncations and ($m\ge 1$)-waves.}  We start
  this part of the proof with an observation regarding the cut-off
  functions; we want to have them in the argument of the sesquilinear
  form.  Due to Lemma \ref{lem:Caccioppoli} and the properties of the
  cut-off functions $\eta=\eta_R$ we have
  \begin{align}
    \label{eq:strongH1convcutoff}
    \meanint_{W_R}|v^\pm_R-v^\pm_{R,\eta}|^2 + |\nabla v^\pm_R-\nabla
    v^\pm_{R,\eta}|^2\leq \frac{C}{R}\,,
  \end{align}
  and therefore, by Lemma \ref {lem:orthogonalwavenumber},
  \begin{align*}
    &b^\pm_R\left(v^\pm_R, v^\pm_R\right)
    -b_R^\pm\left(v^\pm_{R,\eta}, v^\pm_{R,\eta}\right)\\
    &\quad=b_R^\pm\left(v^\pm_R-v^\pm_{R,\eta}, v^\pm_R\right) +
    b_R^\pm\left(v^\pm_{R,\eta},v^\pm_R- v^\pm_{R,\eta}\right)
    \to 0\quad\text{as }R\rightarrow\infty\,.
  \end{align*}
  The energy conservation \eqref {eq:testing2} therefore implies that,
  as $R\rightarrow\infty$,
  \begin{align}
    \label{eq:testing3}
    \Im\, b_R^-\left(v^-_{R,\eta}, v^-_{R,\eta}\right) - \Im\,
    b_R^+\left(v^+_{R,\eta}, v^+_{R,\eta}\right) \to 0\,.
  \end{align}

  We next decompose the sesquilinear forms $b_R^\pm$ according to the
  projections of Definition \ref {def:Projections}, and suppress the
  superscript ``$\pm$'' in the projection. We exploit sesquilinearity
  of $b_R^+$ in both arguments and write
  \begin{align}
    \nonumber
    &\Im\, b_R^+\left(v^+_{R,\eta}, v^+_{R,\eta}\right)\\
    \label{eq:decomposeprojections1}
    &\quad = \Im\, b_R^+\left(\Pi^{\ev}_{m\geq 1}\left(v^+_{R,\eta}\right),
      v^+_{R,\eta}\right) +\Im\,
    b_R^+\left(\Pi^{\ev}_{m=0}\left(v^+_{R,\eta}\right),
      \Pi^{\ev}_{m\geq 1}\left(v^+_{R,\eta}\right)\right)\\
    \nonumber
    &\qquad +\Im\, b_R^+\left(\Pi^{\ev}_{m=0}\left(v^+_{R,\eta}\right),
      \Pi^{\ev}_{m=0}\left(v^+_{R,\eta}\right)\right)\,.
  \end{align}
  We want to exploit the smallness of $m\ge 1$-contributions of Lemma
  \ref{lem:only-m=0}. The regularity result of Lemma
  \ref{lem:Caccioppoli} together with the properties of the
  sesquilinear form $b^+_R$ of Lemma \ref {lem:orthogonalwavenumber}
  yield that the first term on the right hand side of \eqref
  {eq:decomposeprojections1} vanishes in the limit as $R\rightarrow
  \infty$. For the second term we apply Lemma
  \ref{lem:RegularityProjectionsev}, which provides that also the
  gradient of $\Pi^{\ev}_{m\geq 1}\left(v^+_{R,\eta}\right)$ is small;
  Lemma \ref{lem:orthogonalwavenumber} implies
  \begin{align*}
    b_R^+\left(\Pi^{\ev}_{m=0}\left(v^+_{R,\eta}\right),
      \Pi^{\ev}_{m\geq 1}\left(v^+_{R,\eta}\right)\right)\rightarrow
    0\quad\text{as }R\rightarrow\infty\,,
  \end{align*}
  i.e.\,also the second term on the right hand side of
  \eqref{eq:decomposeprojections1} vanishes in the limit.  We find
  that, as $R\rightarrow\infty$,
  \begin{align}
    \label{eq:simplification 1}
    \Im\, b_R^+\left(v^+_{R,\eta}, v^+_{R,\eta}\right) = \Im\,
    b_R^+\left(\Pi^{\ev}_{m=0}\left(v^+_{R,\eta}\right),
      \Pi^{\ev}_{m=0}\left(v^+_{R,\eta}\right)\right) +
    o(1)\,.
  \end{align}

  \smallskip {\em Step 3: Energy flux and outgoing wave conditions.}
  In this step we decompose
  $\Im\,b_R^+\left(\Pi^{\ev}_{m=0}\left(v^+_{R,\eta}\right),
    \Pi^{\ev}_{m=0}\left(v^+_{R,\eta}\right)\right)$ as follows:
  \begin{align}
    \begin{split}
      \label{eq:decomposeprojections2}
      &\Im\,b_R^+\left(\Pi^{\ev}_{m=0}\left(v^+_{R,\eta}\right),
        \Pi^{\ev}_{m=0}\left(v^+_{R,\eta}\right)\right)\\
      &\qquad =\Im\,b_R^+\left(\Pi^+_{<0}\Pi^{\ev}_{m=0}\left(v^+_{R,\eta}\right),
        \Pi^+_{<0}\Pi^{\ev}_{m=0}\left(v^+_{R,\eta}\right)\right)\\
      &\qquad\qquad +\Im\,b_R^+\left(\Pi^+_{<
          0}\Pi^{\ev}_{m=0}\left(v^+_{R,\eta}\right),
        \Pi^+_{\geq 0}\Pi^{\ev}_{m=0}\left(v^+_{R,\eta}\right)\right)\\
      &\qquad\qquad +\Im\,b_R^+\left(\Pi^+_{\geq
          0}\Pi^{\ev}_{m=0}\left(v^+_{R,\eta}\right), \Pi^+_{\geq
          0}\Pi^{\ev}_{m=0}\left(v^+_{R,\eta}\right)\right)\\
      &\qquad\qquad +\Im\,b_R^+\left(\Pi^+_{\geq
          0}\Pi^{\ev}_{m=0}\left(v^+_{R,\eta}\right),
        \Pi^+_{<0}\Pi^{\ev}_{m=0}\left(v^+_{R,\eta}\right)\right)\\
      &\qquad =\Im\,b_R^+\left(\Pi^+_{<0}\Pi^{\ev}_{m=0}\left(v^+_{R,\eta}\right),
        \Pi^+_{<0}\Pi^{\ev}_{m=0}\left(v^+_{R,\eta}\right)\right)\\
      &\qquad\qquad +\Im\,b_R^+\left(\Pi^+_{\geq
          0}\Pi^{\ev}_{m=0}\left(v^+_{R,\eta}\right), \Pi^+_{\geq
          0}\Pi^{\ev}_{m=0}\left(v^+_{R,\eta}\right)\right)\,,
    \end{split}
  \end{align}
  where the last equality holds, since for $\lambda=(j,m=0)\in
  I^+_{<0}$ and $\tilde\lambda=(\tilde j,m=0)\in I^+_{\geq 0}$ one
  always has $j\neq \tilde j$ and thus the mixed sesquilinear forms
  vanish due to orthogonality in the wave number, cf. Lemma
  \ref{lem:orthogonalwavenumber}. Exploiting the outgoing wave
  condition \eqref{eq:outgoingtruncated} on the right or, better, the
  weaker expression \eqref{eq:weakoutgoingright}, we find that the
  first term on the right hand side of \eqref
  {eq:decomposeprojections2} vanishes in the limit $R\to
  \infty$. Hence
  \begin{align}
    \begin{split}
      \label{eq:simplification2}
      &\Im\,b_R^+\left(\Pi^{\ev}_{m=0}\left(v^+_{R,\eta}\right),
        \Pi^{\ev}_{m=0}\left(v^+_{R,\eta}\right)\right)\\
      &\quad =\Im\,b_R^+\left(\Pi^+_{\geq
          0}\Pi^{\ev}_{m=0}\left(v^+_{R,\eta}\right), \Pi^+_{\geq
          0}\Pi^{\ev}_{m=0}\left(v^+_{R,\eta}\right)\right) +
      o(1)\quad\text{as }R\rightarrow \infty\,.
    \end{split}
  \end{align}
  We emphasize that we only used the energetic outgoing wave condition
  \eqref{eq:weakoutgoingright} in this calculation. 

  Combining \eqref{eq:simplification 1} with
  \eqref{eq:simplification2} we finally obtain, as
  $R\rightarrow\infty$,
  \begin{equation}\label{eq:proof-prop-bRpm}
    \Im\, b_R^+\left(v^+_{R,\eta}, v^+_{R,\eta}\right)
    =\Im\,b_R^+\left(\Pi^+_{\geq
        0}\Pi^{\ev}_{m=0}\left(v^+_{R,\eta}\right), \Pi^+_{\geq
        0}\Pi^{\ev}_{m=0}\left(v^+_{R,\eta}\right)\right) +
    o(1)\,.
  \end{equation}

  \smallskip {\em Step 4: Consequences for outgoing waves.}  We
  analyze \eqref {eq:proof-prop-bRpm} further, exploiting the discrete
  Bloch expansion of $v^\pm_{R,\eta}=\sum_{\lambda\in I_{R}}
  \alpha^\pm_{\lambda,R} U^\pm_\lambda$:
  \begin{align*}
    &\Im\, b_R^+\left(\Pi^+_{\geq
        0}\Pi^{\ev}_{m=0}\left(v^+_{R,\eta}\right),
      \Pi^+_{\geq 0}\Pi^{\ev}_{m=0}\left(v^+_{R,\eta}\right)\right)\\
    &\quad =\Im\, \sum_{\lambda=(j,0)\in I_{R}\cap I^+_{\geq
        0}}\quad \sum_{\tilde\lambda=(\tilde j,0) \in I_{R}\cap I^+_{\geq
        0}} \bar\alpha^+_{\lambda,R}\, \alpha^+_{\tilde\lambda,R}\ b_R^+
    \left(U^+_\lambda,U^+_{\tilde\lambda}\right)\\
    &\quad =\sum_{\lambda=(j,0)\in I_{R}\cap I^+_{\geq
        0}}|\alpha^+_{\lambda,R}|^2\,
    \Im\,b_R^+\left(U^+_\lambda,U^+_\lambda\right)
    =\sum_{\lambda=(j,0)\in I_{R}\cap I^+_{\geq
        0}}|\alpha^+_{\lambda,R}|^2\,P^+_\lambda\,.
  \end{align*}
  In the last line we again exploited the orthogonality of the
  sesquilinear form $b^+_R$ in the wave number $j$, see Lemma
  \ref{lem:orthogonalwavenumber}, and the relation \eqref
  {eq:relationbS-2} for $P^\pm_\lambda$. We may therefore write \eqref
  {eq:proof-prop-bRpm} as
  \begin{align*}
    \Im\, b_R^+\left(v^+_{R,\eta}, v^+_{R,\eta}\right) &=
    \sum_{\lambda=(j,0) \in I_{R}\cap I^+_{\geq
        0}}|\alpha^+_{\lambda,R}|^2\,P^+_\lambda +o(1)\,.
  \end{align*}
  On the left, we find similarly
  \begin{align*}
    \Im\, b_R^-\left(v^-_{R,\eta}, v^-_{R,\eta}\right) &=
    \sum_{\lambda=(j,0)\in I_{R}\cap I^-_{\leq
        0}}|\alpha^-_{\lambda,R}|^2\,P^-_\lambda +o(1)\,.
  \end{align*}
  The energy relation \eqref{eq:testing2} together with the sign
  properties $P^+_\lambda\ge 0$ for $\lambda \in I^+_{\geq 0}$ and
  $P^-_\lambda\le 0$ for $\lambda \in I^-_{\le 0}$ allows to conclude
  \eqref {eq:result-P-weighted-sum}.
\end{proof}

\subsection{Proof of Theorem \ref {thm:uniqueness} and Theorem \ref
  {thm:VertWaveNumber}}
\label{ssec.Bloch-uniqueness}

We study solutions to the Helmholtz equation \eqref {eq:Peps}. In
order to prove Theorems \ref {thm:uniqueness} and \ref
{thm:VertWaveNumber}, we have to check conditions that are satisfied
by the support of Bloch measures of solutions. We recall the notation
\begin{equation*}
  J^\pm_{=0,l} =\{j\in Z=[0,1]^2\, |\,P_\lambda^\pm =0\,
  \text{ for }\lambda=(j,l)\}
\end{equation*} 
for the index set corresponding to vertical waves.

\paragraph{\bf Proof of Theorem \ref {thm:uniqueness}.}
In the following, we consider solutions to the transmission Problem
\ref {prob:Transmission}. We are interested in a function $v$ which is
the difference of two solutions or, equivalently, a solution to the
problem without incoming wave.

\begin{proposition}[Solutions in absence of incoming
  waves]\label{prop:uniqueness}
  Let Assumption \ref{ass:smallnessfrequency} on $\omega>0$ be
  satisfied and let $v$ be a solution to the scattering problem
  \eqref{eq:Peps}, periodic in vertical direction, satisfying outgoing
  wave conditions on the left and on the right according to Definition
  \ref{def:outgoingwave}, without incoming wave. Let
  $\nu^\pm_{l,\infty}$, with $l\in\N_0$, be Bloch measures that are
  generated by $v$. Then
  \begin{align}
    \nu^\pm_{l,\infty} &= 0\qquad \text{for } l\ge 1,\\
    \label{eq:uniqueness1}
    \supp(\nu^\pm_{0,\infty}) &\subset J^\pm_{=0,0}\,.
  \end{align}
\end{proposition}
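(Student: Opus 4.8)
The plan is to treat the two assertions separately, reading each off from the discrete information already assembled in Lemma~\ref{lem:only-m=0}, Proposition~\ref{prop:weightedsumalpha} and the outgoing wave condition in its truncated form \eqref{eq:outgoingtruncated}. Throughout I work with the coefficients $\alpha^\pm_{\lambda,R}$ of $v^\pm_{R,\eta}$, which are precisely those entering the discrete Bloch measures $\nu^\pm_{l,R}$ of Definition~\ref{def:Blochmeasure}, so that the three input facts and the object to be estimated all refer to the same truncated expansion and no further cut-off error terms appear.

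For the vanishing of $\nu^\pm_{l,\infty}$ with $l\ge 1$ I would simply control the total mass. By \eqref{eq:Blochmeasure2} the mass of $\nu^\pm_{l,R}$ equals $\sum_{\lambda=(j,l)\in I_R}|\alpha^\pm_{\lambda,R}|^2$, and summing over all $l\ge 1$ gives $\meanint_{W_R}|\Pi^{\ev,\pm}_{m\ge 1}(v^\pm_{R,\eta})|^2\le C/R$ by Lemma~\ref{lem:only-m=0}. Hence each $\nu^\pm_{l,R}$ with $l\ge 1$ has total mass tending to $0$, so its weak limit $\nu^\pm_{l,\infty}$ is the zero measure.

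For the support statement I would localize in $j$, using that the limiting measures are non-negative: it suffices to test with $\phi\in C(Z)$, $0\le\phi\le1$, supported in a small neighborhood $U$ of an arbitrary $j_0$ with $P^+_{(j_0,0)}\neq 0$, and to show $\int_Z\phi\,d\nu^+_{0,R}\to 0$. On the right I have two facts: the outgoing wave condition gives $\sum_{\lambda\in I_R\cap I^+_{<0}}|\alpha^+_{\lambda,R}|^2\to 0$, while Proposition~\ref{prop:weightedsumalpha} gives $\sum_{\lambda=(j,0)\in I_R\cap I^+_{\ge 0}}|\alpha^+_{\lambda,R}|^2\,P^+_\lambda\to 0$, a sum of non-negative terms. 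If $P^+_{(j_0,0)}>0$, continuity of $j\mapsto P^+_{(j,0)}$ yields a constant $c>0$ with $P^+_{(j,0)}\ge c$ on $U$, so that $\int_Z\phi\,d\nu^+_{0,R}\le c^{-1}\sum_{\lambda=(j,0)\in I_R\cap I^+_{\ge 0}}|\alpha^+_{\lambda,R}|^2\,P^+_\lambda\to 0$. If $P^+_{(j_0,0)}<0$, then $U$ lies inside $\{P^+_{(\cdot,0)}<0\}$ and $\int_Z\phi\,d\nu^+_{0,R}\le\sum_{\lambda\in I_R\cap I^+_{<0}}|\alpha^+_{\lambda,R}|^2\to 0$. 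In both cases $\int_Z\phi\,d\nu^+_{0,\infty}=0$, so $\nu^+_{0,\infty}$ charges no point of $\{P^+_{(\cdot,0)}\neq 0\}$, i.e.\ $\supp(\nu^+_{0,\infty})\subset J^+_{=0,0}$. The left side is identical after swapping the two signs: the outgoing condition suppresses $I^-_{>0}$, while Proposition~\ref{prop:weightedsumalpha} suppresses $I^-_{\le 0}$ through $\sum|\alpha^-_{\lambda,R}|^2\,|P^-_\lambda|\to 0$.

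The main obstacle is the localization step, which requires that a strict sign of $P^\pm_{(j,0)}$ at $j_0$ persist on a full neighborhood and produce the uniform bound $c>0$; this is exactly the continuity (or suitable one-sided semicontinuity) of $j\mapsto P^\pm_{(j,0)}$. For the lowest band $m=0$ the eigenvalue $\mu_0^\pm(j)$ is separated from the higher bands under Assumption~\ref{ass:smallnessfrequency}, so the corresponding eigenprojection, the normalized mode $\Psi^\pm_{j,0}$, and hence the gauge-invariant quadratic quantity $P^\pm_{(j,0)}$ defined in \eqref{eq:Poynting-def}, depend continuously on $j$; verifying this regularity of the lowest Bloch mode is the delicate point I would check carefully. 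I note that, consistently with the first part, the whole argument only ever uses the $m=0$ slice, which is what allows the passage from the energy identities to a clean statement about the supports.
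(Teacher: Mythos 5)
Your argument is correct and is essentially the paper's own proof. The case $l\ge 1$ is handled exactly as in the paper, by reading off from Lemma \ref{lem:only-m=0} that the total mass of $\nu^\pm_{l,R}$ tends to zero; for $l=0$ you combine, just as the paper does, the truncated outgoing wave condition \eqref{eq:outgoingtruncated} (to remove the $I^+_{<0}$ contribution) with Proposition \ref{prop:weightedsumalpha} (to remove the $I^+_{\ge 0}$ contribution after dividing by a positive lower bound for $P^+$). That you localize the test function near a single point $j_0$ and distinguish the two signs of $P^+_{(j_0,0)}$, instead of taking one test function supported in $\{j : (j,0)\in I^+_{<0}\cup I^+_{>0}\}$ and splitting the sum, is only a cosmetic difference.

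One caveat, concerning precisely the point you flag as delicate: your proposed justification of the continuity of $j\mapsto P^\pm_{(j,0)}$ does not work. Assumption \ref{ass:smallnessfrequency} only places $\omega^2$ below the second band, $\omega^2<\inf_{j,\,m\ge1}\mu^\pm_m(j)$; it does \emph{not} separate the band $m=0$ from the bands $m\ge 1$, and such a separation is false in general. For instance, on the left-hand side \eqref{eq:Fourierbasis} gives $\mu^-_m(j)=4\pi^2|k(m)+j|^2/\eps^2$, and at $j=(\tfrac12,\tfrac12)$ the lowest eigenvalue is fourfold degenerate (attained by $k\in\{(0,0),(-1,0),(0,-1),(-1,-1)\}$); near that point the minimizing branch switches, and indeed $P^-_{(j,0)}=2\pi j_1/\eps$ for $j_1<\tfrac12$ but $P^-_{(j,0)}=2\pi (j_1-1)/\eps$ for $j_1>\tfrac12$ (with $j_2$ fixed close to, but different from, $\tfrac12$), so $P^-_{(\cdot,0)}$ jumps across the line $j_1=\tfrac12$. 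Hence the regularity of $P^\pm_{(\cdot,0)}$ cannot be obtained from a spectral gap below the higher bands. To be fair, the paper's proof asserts this continuity without proof as well, and uses it in the same way (to get the positive constant $c_1$ on the support of the test function); so this is a gap shared with the paper rather than a defect of your strategy relative to it, but your specific claim that Assumption \ref{ass:smallnessfrequency} implies band separation should be deleted.
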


\begin{proof}
  We only show the statement for the limiting Bloch measures
  $\nu^+_{l,\infty}$, the argument for $\nu^-_{l,\infty}$ is
  analogous. Let $v^+_{R,\eta}=\sum_{\lambda\in
    I_R}\alpha^+_{\lambda,R}U_\lambda^+$ be the expansion of the
  truncated solution. Then the corresponding discrete Bloch measures
  are given by
  \begin{equation*}
    \nu_{l,R}^+ = \sum_{\lambda = (j,l)\in I_R}  |\alpha_{\lambda,R}^+|^2\,\delta_j\,.
  \end{equation*}

  \smallskip {\it The case $l\geq 1$:} From
  \eqref{eq:projection-m-ge-1} we know that
  \begin{equation*}
    \meanint_{W_R} \left| \Pi^{\ev}_{m\ge 1} (v^+_{R,\eta}) \right|^2
    =\sum_{\lambda=(j,m)\in I_R\atop m\ge 1}|\alpha_{\lambda,R}^+|^2
    \leq\frac{C}{R}\,,
  \end{equation*}
  and hence
  \begin{equation*}
    \int_Z\,d\nu^+_{l,R}=\sum_{\lambda = (j,l)\in
      I_R}|\alpha_{\lambda,R}^+|^2\rightarrow 0\quad\text{as
    }R\rightarrow\infty\,.
  \end{equation*}
  This shows $\nu^+_{l,\infty}=0$ for every $l\geq 1$.

  \smallskip {\it The case $l=0$:} We have to show
  $\supp{(\nu^+_{0,\infty})}\subset J^+_{=0,0}$.  To this end, we
  consider an arbitrary test function $\phi\in C(Z)$ with
  \begin{equation*}
    \supp{(\phi)} \subset\{j\in Z\,|\, 
    \lambda=(j,0)\in I^+_{<0}\cup I^+_{>0}\}\,.    
  \end{equation*}
  The outgoing wave condition \eqref {eq:outgoingright} and
  Proposition \ref{prop:weightedsumalpha} yield, in the limit $R\to
  \infty$,
  \begin{align*}
    \sum_{\lambda=(j,0)\in I_{R}\cap I^+_{< 0}}
    |\alpha^+_{\lambda,R}|^2\, \to 0\quad\text{and}\quad
    \sum_{\lambda=(j,0)\in I_{R}\cap I^+_{> 0}}
    |\alpha^+_{\lambda,R}|^2\,P^+_\lambda \to 0\,.
  \end{align*}
  In the following we assume that $\supp{(\phi)}\cap \{j\in Z\,|\,
  \lambda=(j,0)\in I^+_{>0}\}\neq \emptyset$, otherwise the proof
  simplifies. We find
  \begin{equation*}
    c_1 := \inf_{\lambda=(j,0)\in I^+_{>0} \atop
      j\in\supp{(\phi)}}{P^+_\lambda} > 0\,.
  \end{equation*}
  Without loss of generality, we assume $\phi\ge 0$ (otherwise we
  consider absolute values). For the limit $R\to \infty$ we
  calculate
  \begin{align*}
    \int_Z \phi\, d\nu^+_{0,R} &= \sum_{\lambda=(j,0)\in I^+_{<0}\cap
      I_R\atop j\in\supp{(\phi)}}|\alpha_{\lambda,R}^+|^2\phi(j)
    +\sum_{\lambda=(j,0)\in I^+_{>0}\cap I_R\atop j\in\supp{(\phi)}}
    |\alpha_{\lambda,R}^+|^2\phi(j) \\
    &\leq \|\phi\|_\infty \sum_{\lambda=(j,0)\in I_{R}\cap I^+_{<
        0}}|\alpha^+_{\lambda,R}|^2 +\|\phi\|_\infty\ \frac{1}{c_1}
    \sum_{\lambda=(j,0)\in I_{R}\cap I^+_{>
        0}}|\alpha^+_{\lambda,R}|^2\, P^+_\lambda \to 0\,.
  \end{align*}
  This shows \eqref {eq:uniqueness1} for ``+'', since $\phi$ with
  support outside $J^+_{=0,0}$ was arbitrary. The argument for ``-''
  is analogous.
\end{proof}

We next prove that, far away from the interface, solutions to the
transmission problem contain only waves that satisfy the frequency
condition and the vertical periodicity.

\begin{proposition}[Bloch measures and frequency
  condition]\label{prop:uniqueness-omega}
  Let Assumption \ref{ass:smallnessfrequency} on $\omega>0$ be
  satisfied and let $v$ be a solution to the scattering problem
  \eqref{eq:Peps}, periodic in vertical direction, satisfying outgoing
  wave conditions on the left and on the right according to Definition
  \ref{def:outgoingwave}, with incoming wave $U_\inc$ or without
  incoming wave. Let $\nu^\pm_{0,\infty}$ be the Bloch measure to
  $l=0$ that is generated by $v$. Then
  \begin{equation}
   \label{eq:Bloch-measure-1}
   \supp(\nu^\pm_{0,\infty}) \subset \left\{ j\in Z | \mu_0^\pm(j) =
     \omega^2 \,,\ j_2\in \Z/K \right\} \,.
  \end{equation}
\end{proposition}

\begin{proof} 
  Remark \ref {rem:Blochdifferentperiod} implies that the discrete
  Bloch measures $\nu^\pm_{l,R}$ are supported on $\left\{ j\in Z |\,
    j_2\in \Z/K \right\}$. This implies that also the limit measure
  $\nu^\pm_{l,\infty}$ are supported on this set.

  In order to show \eqref {eq:Bloch-measure-1}, it remains to check
  the frequency condition; we proceed as in the last proof. Let $\phi:
  Z\to \R$ be continuous and bounded with $\supp(\phi) \cap \{ j |
  \mu_0(j) = \omega^2 \} = \emptyset$. Arguing with decompositions of
  the domain of integration, we can consider separately a
  test-function $\phi\ge 0$ with the property $\phi(j) > 0 \Rightarrow
  \mu_0(j) > \omega^2$ and a test-function $\tilde\phi\ge 0$ with the
  property $\tilde\phi(j) > 0 \Rightarrow \mu_0(j) < \omega^2$. The
  arguments are analogous and we consider here only $\phi$ as above.

  By continuity of $\phi$ we find some $\delta>0$ such that $\mu_0(j)
  - \omega^2 \ge \delta$ for every $j \in \supp(\phi)$. Our aim is to
  show that $\int_Z\phi\,d\nu^+_{0,\infty}=0$.  By definition of the
  Bloch measure $\nu^+_{0,\infty}$ we have, as $R\to\infty$,
  \begin{align}
    \begin{split}
      \label{eq:proof-thm13}
      0&\leq \delta \int_Z\phi\,d\nu^+_{0,\infty}\leftarrow
      \delta \int_Z \phi\, d\nu^+_{0,R}
      = \delta \sum_{\lambda = (j,0)\in I_R} |\alpha_{\lambda,R}^+|^2 \phi(j)\\
      &\leq \sum_{\lambda = (j,0)\in I_R} (\mu_0(j)-\omega^2) 
      |\alpha_{\lambda,R}^+|^2\phi(j) \,.
    \end{split}
  \end{align}
  The result $\int_Z\phi\,d\nu^+_{0,\infty}=0$ is shown once we prove
  that the right hand side of \eqref{eq:proof-thm13} vanishes in the
  limit $R\to \infty$. In order to show this fact, we recall that the
  coefficients $\alpha_{\lambda,R}^+$ are obtained from a
  Bloch-expansion of the solution at the far right,
  i.e. $\alpha^+_{\lambda,R}=\langle u^+_{R,\eta},
  U^+_\lambda\rangle_R$. We calculate
  \begin{align*}
    &\sum_{\lambda = (j,0)\in I_R} (\mu_0(j)-\omega^2) |\alpha_{\lambda,R}^+|^2\phi(j) \\
    &\qquad\stackrel{(1)}{=}\sum_{\lambda = (j,0)\in I_R}\phi(j)\,
    \overline{\alpha_{\lambda,R}^+}\,\left[\langle u^+_{R,\eta},
      \mu_0(j) U^+_\lambda\rangle_R
      -\langle \omega^2\,u^+_{R,\eta}, U^+_\lambda\rangle_R\right]\displaybreak[2]\\
    &\qquad\stackrel{(2)}{=}\sum_{\lambda = (j,0)\in I_R}\phi(j)\,
    \overline{\alpha_{\lambda,R}^+}\,\left[\langle u^+_{R,\eta},
      \calL_0 U^+_\lambda\rangle_R
      -\langle \omega^2\,u^+_{R,\eta}, U^+_\lambda\rangle_R\right]\displaybreak[2]\\
    &\qquad\stackrel{(3)}{=}\sum_{\lambda =(j,0)\in I_R}\phi(j)\,
    \overline{\alpha_{\lambda,R}^+}\,\langle\calL_0 u^+_{R,\eta}
    -\omega^2\,u^+_{R,\eta},  U^+_\lambda\rangle_R\displaybreak[2]\\
    &\qquad\stackrel{(4)}{\leq}\|\phi\|_{\infty}\,\left(\sum_{\lambda
        =(j,0)\in I_R}|\alpha_{\lambda,R}^+|^2\right)^{1/2}
    \left(\sum_{\lambda =(j,0)\in I_R}\left|\langle\calL_0
        u^+_{R,\eta} -\omega^2\,u^+_{R,\eta},
        U^+_\lambda\rangle_R\right|^2\right)^{1/2}.
  \end{align*}
  In this calculation we used the following: (1) formula for
  $\alpha^+_{\lambda,R}$, (2) the eigenvalue property of $U_\lambda$
  with eigenvalue $\mu_\lambda = \mu_m(j)$, (3) integration by parts
  without boundary terms due to the cut-off function $\eta$, (4)
  Cauchy-Schwarz inequality.  Using orthonormality of the basis
  functions $U^\pm_\lambda$ we obtain
  \begin{align*}
    &\sum_{\lambda = (j,0)\in I_R} (\mu_0(j)-\omega^2) |\alpha_{\lambda,R}^+|^2\phi(j)\\
    &\qquad \leq \|\phi\|_\infty\left(\meanint_{W_R}
      \left|\Pi^{\ev}_{m=0}u^+_{R,\eta}\right|^2\right)^{1/2}
    \left(\meanint_{W_R}\left|\Pi^{\ev}_{m=0}\left(\calL_0
          u^+_{R,\eta}
          -\omega^2\,u^+_{R,\eta}\right)\right|^2\right)^{1/2}\displaybreak[2]\\
    &\qquad \leq
    \|\phi\|_\infty\left(\meanint_{W_R}\left|u^+_{R,\eta}\right|^2\right)^{1/2}
    \left(\meanint_{W_R}\left|\calL_0 u^+_{R,\eta}
        -\omega^2\,u^+_{R,\eta}\right|^2\right)^{1/2}.
  \end{align*}
  Since $u^+_{R,\eta}$ satisfies uniform $L^2$-bounds and since
  $\calL_0 u^+_{R,\eta} = \omega^2\,u^+_{R,\eta}$ holds up to a small
  $L^2$-error, the right hand side of \eqref {eq:proof-thm13} is small
  for large $R>0$. This proves $\int_Z\phi\,d\nu^+_{0,\infty} = 0$ and
  hence \eqref {eq:Bloch-measure-1} for ``+''.  The proof for ``-'' is
  analogous.
\end{proof}

\begin{proof}[Proof of Theorem \ref {thm:uniqueness}]
  The difference $v$ of two solutions satisfies the outgoing wave
  condition without an incident wave.  Theorem \ref {thm:uniqueness}
  is an immediate consequence of Propositions \ref {prop:uniqueness}
  and \ref {prop:uniqueness-omega}.
\end{proof}

\paragraph{\bf Proof of Theorem \ref {thm:VertWaveNumber}.}

We now provide the proof of Theorem \ref {thm:VertWaveNumber}. We
therefore assume that: Assumption \ref {ass:smallnessfrequency} on
$\omega>0$ is satisfied and $u$ is a solution of the scattering
problem with incoming wave $U_\inc$, which has the wave number $k =
(k_1, k_2)$. In particular, $u$ is a vertically periodic solution of
\eqref {eq:Peps} such that $u$ and $u-U_\inc$ satisfy the outgoing
wave conditions on the right and on the left.

Let $\nu^\pm_{l,\infty}$ be Bloch measures that are generated by the
solution $u$.  The frequency condition \eqref{eq:smallnessfrequency}
is satisfied and we can therefore use Lemma \ref {lem:only-m=0}.  As
in Proposition \ref {prop:uniqueness}, case $l\ge 1$, we conclude from
\eqref {eq:projection-m-ge-1} (and the analogous result for ``-'')
that $\nu^\pm_{l,\infty} = 0$ holds for every $l\ge 1$. Moreover,
according to Proposition \ref{prop:uniqueness-omega} we have that
$\supp(\nu^\pm_{0,\infty}) \subset \left\{ j\in Z | \mu_0^\pm(j)
  =\omega^2\,,\, j_2\in \Z/K \right\}$.

Theorem \ref {thm:VertWaveNumber} is shown once we verify the
following property of the Bloch measure $\nu^\pm_{0,\infty}$:
\begin{equation}
   \label{eq:Bloch-measure-vert-1}
   \supp(\nu^\pm_{0,\infty}) \subset \left\{ j\in Z | j_2 = k_2
   \right\}  \cup J^\pm_{=0,0}\,.
\end{equation}

\smallskip {\em Proof of \eqref {eq:Bloch-measure-vert-1}.} We
consider the projection $\Pi^{\ver}_{k_2} u$ of $u$. This function is
again a solution of the scattering problem.  Indeed, by Lemma
\ref{lem:Propertiesvertprojection} one has $\Pi^{\ver}_{k_2} u\in
H^1_\loc(\R\times(0,h);\C)$ with periodicity in the $x_2$-variable,
and for arbitrary test functions $\varphi\in C_c^\infty
(\R\times(0,h))$ there holds
\begin{align*}
  &\int_\R\int_0^h\nabla\varphi\cdot
  a^\eps\,\nabla\left(\Pi^{\ver}_{k_2} u\right)
  =\int_\R\int_0^h\nabla\varphi\cdot a^\eps\,\Pi^{\ver}_{k_2}
  \left(\nabla u\right)
  =\int_\R\int_0^h\Pi^{\ver}_{k_2}
  \left(\nabla\varphi\right)\cdot a^\eps\, \nabla u\\
  &\quad
  =\int_\R\int_0^h\nabla\left(\Pi^{\ver}_{k_2}\varphi\right)\cdot
  a^\eps\, \nabla u =\omega^2\int_\R\int_0^h
  \Pi^{\ver}_{k_2}\varphi\,u
  =\omega^2\int_\R\int_0^h\varphi\,\Pi^{\ver}_{k_2}u\,,
\end{align*}
where we exploited the orthogonality properties of $\Pi^{\ver}_{k_2}$
from Lemma \ref{lem:orthweight} and the solution property of $u$.

As a consequence, the difference $v := u - \Pi^{\ver}_{k_2} u$ is a
solution of the scattering problem with vanishing incoming wave (just
as the difference of two solutions in the proof of Theorem \ref
{thm:uniqueness}). The uniqueness statement of Proposition \ref
{prop:uniqueness} implies: Bloch measures (for $l=0$) that are
generated by $v$ have their support in vertical waves, i.e.\,in
$J^\pm_{=0,0}$.

On the other hand, the Bloch measure of $\Pi^{\ver}_{k_2} u$ is
concentrated on waves with vertical wave number $k_2$, i.e.\,in
$\left\{ j\in Z | j_2 = k_2 \right\}$. This follows immediately from
the fact that all coefficients $\alpha_{(j,m)}$ with $j_2\neq k_2$ in
the expansion of $\Pi^{\ver}_{k_2} u$ vanish.

Since the Bloch measure of $u$ can have its support only in the union
of the supports corresponding to $\Pi^{\ver}_{k_2} u$ and $u -
\Pi^{\ver}_{k_2} u$, the claim \eqref {eq:Bloch-measure-vert-1}
follows.

Theorem \ref {thm:VertWaveNumber} is shown.

\section{Outlook and conclusions}

\subsubsection*{Remarks on the existence of solutions}
\label{sec.existence}

We give some remarks concerning the existence of solutions to the
scattering problem. In the end, our radiation condition is ``the right
one'' only if, besides uniqueness, an existence result can be shown.

We formulate the following conjecture: Given a non-singular frequency
$\omega>0$, given coefficients $a = a^\eps$ that are equal to $1$ in
the left half plane and $\eps$-periodic in the right half plane,
strictly positive and bounded, given finally an incoming wave $U_\inc$
as in \eqref {eq:u-inc} (possibly with a condition on $k$), there
exists a solution $u$ of the transmission Problem \ref
{prob:Transmission}.

\smallskip The idea for an existence proof is the limiting absorption
principle (see e.g.\,\cite {Hoang2011, JolyLiFliss2006, Radosz2015}
for recent contributions): For a positive artificial damping parameter
$\delta>0$, we consider the equation
\begin{equation}
  \label{eq:damped-HH}
  -\nabla\cdot ((1-i\delta)a(x) \nabla u^\delta(x)) = \omega^2 u^\delta(x)
\end{equation}
for $x\in \Omega = \R\times (0,h)$.  Due to the strictly negative
imaginary part of the coefficient $(1-i\delta)a(x)$, this equation
admits a unique solution $u^\delta$ in the Beppo-Levi space $\dot
H^1(\Omega)$ as can be shown with the Lax-Milgram Lemma.

To proceed, two properties must be shown. The first is: The sequence
$u^\delta$ satisfies estimates in some function space, uniformly in
$\delta>0$. Once this is shown, we can consider the distributional
limit $u$ of the sequence $u^\delta$ as $\delta\to 0$. As a
consequence of distributional convergence, the limit $u$ is a solution
of the Helmholtz equation with coefficients $a$.

The intricate part of this approach is to show the second property:
The limit $u$ satisfies the outgoing wave condition. We do not see a
straightforward argument that yields this condition.

\subsubsection*{Our outgoing wave condition in a numerical scheme}

The condition of Defitition \ref {def:outgoingwave} might seem
inadequate for numerical purposes on first sight. But it is possible
to interpret the condition for a discrete realization: Instead of
demanding that limits $R\to\infty$ vanish, we rather demand that, for
some large distance $R$, the left hand side of \eqref
{eq:outgoingright} vanishes. In the numerical scheme, this amounts to
imposing that a finite number of projections to left-going waves
vanishes.

\subsubsection*{Conclusions}

We have investigated the transmission properties at the boundary of a
photonic crystal. Our theorems justify the following: An incoming wave
generates, inside the photonic crystal, only those Bloch waves, for
which the eigenvalue coincides with the (squared) frequency of the
incoming wave. Furthermore, only those Bloch waves can be generated
that have the same vertical wave number as the incoming wave; this
latter statement is true up to vertical waves.

Our results rely on a new outgoing wave condition in photonic
crystals. The new radiation condition is based on Bloch expansions. It
is accompanied by a (weak) uniqueness result, which is expressed with
Bloch-measures. The uniqueness result is the basis for the analysis of
the transmission problem.

\subsection*{Acknowledgements}

Support of both authors by DFG grant Schw 639/6-1 is gratefully
acknowledged.

\appendix
\section{Orthogonality and regularity properties}

\begin{lemma}[Orthogonality with periodic weight]
  \label{lem:orthweight}
  Let $f:\R\rightarrow\C$ be $\eps$-periodic and integrable, let
  $R\in\N$ be an integer.
  \begin{enumerate}
  \item {\em Orthogonality of exponentials.}  Let $j,\tilde j\in Q_R$
    with $j\neq \tilde j$. Then
    \begin{align}
      \label{eq:orthweight}
      \int_{0}^{\eps R} f(y)e^{2\pi i j y/\eps}e^{-2\pi i \tilde j y/\eps}\,dy=0\,.
    \end{align}
  \item {\em Orthogonality of the vertical pre-Bloch projection.}  Let
    $u,v\in L^2_\loc(\R\times(0,\eps R);\C)$ and let $k_2\in
    Q_R$. Then there holds
    \begin{align}
      \label{eq:orthogonalweightproject}
      \int_0^{\eps R} f(y)\,u(x_1,y)\,\overline{\Pi^{\ver}_{k_2} v(x_1,y)}\,dy
      &=\int_0^{\eps R} f(y)\Pi^{\ver}_{k_2}
      u(x_1,y)\,\overline{\Pi^{\ver}_{k_2} v(x_1,y)}\,dy\,.
    \end{align}
  \end{enumerate}
\end{lemma}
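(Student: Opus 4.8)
The plan is to prove Part 1 by a direct geometric-series computation and then to deduce Part 2 from Part 1 applied to a periodic weight built from the pre-Bloch components.

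For Part 1, I would first write the difference of the two frequencies as $j-\tilde j = s/R$ with $s\in\Z$; since $j,\tilde j\in Q_R$ are distinct numbers of the form $p/R$ with $0\le p\le R-1$, the integer $s$ satisfies $0<|s|<R$, so in particular $R\nmid s$. Setting $g(y):=f(y)\,e^{2\pi i(j-\tilde j)y/\eps}=f(y)\,e^{2\pi i s y/(R\eps)}$, I would split the interval $(0,\eps R)$ into the $R$ cells $[k\eps,(k+1)\eps)$, $k=0,\dots,R-1$, substitute $y=k\eps+t$, and use the $\eps$-periodicity of $f$. Each cell then contributes $e^{2\pi i sk/R}\int_0^\eps f(t)\,e^{2\pi i st/(R\eps)}\,dt$, so that
\[
  \int_0^{\eps R} g
  = \left(\sum_{k=0}^{R-1} e^{2\pi i sk/R}\right)\int_0^\eps f(t)\,e^{2\pi i st/(R\eps)}\,dt .
\]
Because $q:=e^{2\pi i s/R}\neq 1$ (this is exactly where $R\nmid s$ enters) while $q^R=e^{2\pi i s}=1$, the prefactor is the vanishing geometric sum $(q^R-1)/(q-1)=0$, which yields \eqref{eq:orthweight}. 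Only integrability of $f$ on a single cell is needed here.

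For Part 2, I would fix $x_1$ and expand $u(x_1,\cdot)$ and $v(x_1,\cdot)$ in their vertical pre-Bloch series \eqref{eq:discrete-u-1}, writing $u=\sum_{j_2\in Q_R}\Phi^u_{j_2}\,e^{2\pi i j_2 y/\eps}$ with each $\Phi^u_{j_2}(x_1,\cdot)$ being $\eps$-periodic, and analogously for $v$; by \eqref{eq:vertProjection-1} one has $\Pi^\ver_{k_2}u=\Phi^u_{k_2}\,e^{2\pi i k_2 y/\eps}$ and $\Pi^\ver_{k_2}v=\Phi^v_{k_2}\,e^{2\pi i k_2 y/\eps}$. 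Substituting the expansion of $u$ into the left-hand side of \eqref{eq:orthogonalweightproject} turns it into the finite sum over $j_2\in Q_R$ of
\[
  \int_0^{\eps R} F_{j_2}(y)\,e^{2\pi i (j_2-k_2)y/\eps}\,dy,
  \qquad
  F_{j_2}(y):=f(y)\,\Phi^u_{j_2}(x_1,y)\,\overline{\Phi^v_{k_2}(x_1,y)} .
\]
Each weight $F_{j_2}$ is $\eps$-periodic, being a product of $\eps$-periodic factors, so Part 1 applied with the periodic weight $F_{j_2}$ and the frequencies $j_2$, $k_2$ kills every term with $j_2\neq k_2$. Only the term $j_2=k_2$ survives, and it is exactly the right-hand side of \eqref{eq:orthogonalweightproject}.

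The computation is essentially routine; the two points I would watch are the arithmetic fact that distinct elements of $Q_R$ differ by a non-integer rational (so that $q\neq1$ and the geometric sum collapses), and, in Part 2, the integrability of the periodic weights $F_{j_2}$ that is required to invoke Part 1 termwise. The latter is harmless, since the sum over $Q_R$ is finite and, with $u,v\in L^2$ and $f$ essentially bounded (as for the coefficient $a^\eps$ in the applications), each $F_{j_2}$ lies in $L^1(0,\eps R)$ by the Cauchy--Schwarz inequality.
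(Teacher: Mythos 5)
Your proposal is correct and follows essentially the same route as the paper's own proof: Part 1 by splitting $(0,\eps R)$ into $\eps$-cells and collapsing the resulting geometric sum (your $q=e^{2\pi i s/R}\neq 1$, $q^R=1$ is exactly the paper's observation that $R(j-\tilde j)\in\Z$ while $j\neq\tilde j$, $j,\tilde j<1$), and Part 2 by inserting the pre-Bloch expansions and applying Part 1 to the $\eps$-periodic weight $f\,\Phi^u_{j_2}\overline{\Phi^v_{k_2}}$ so that only the $j_2=k_2$ term survives. Your closing remark on the integrability of the weights $F_{j_2}$ is a sensible extra precaution that the paper leaves implicit.
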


\begin{proof}
  1. By dividing the interval $(0,\eps R)$ into subintervals of length
  $\eps$, we obtain
  \begin{align*}
    &\int_{0}^{\eps R} f(y)e^{2\pi i j y/\eps}e^{-2\pi i \tilde j
      y/\eps}\,dy
    =\sum_{k=0}^{R-1}\int_{k\eps}^{(k+1)\eps}f(y)e^{2\pi i(j-\tilde j)y/\eps}\, dy\\
    &\qquad =\sum_{k=0}^{R-1}\int_0^\eps f(y+k\eps)e^{2\pi i(j-\tilde
      j)(y+k\eps)/\eps}\,dy =\sum_{k=0}^{R-1}e^{2\pi i(j-\tilde
      j)k}\int_0^\eps f(y)e^{2\pi i(j-\tilde j)y/\eps}\,dy\,,
  \end{align*}
  where in the last equality we exploited the periodicity of the
  weight $f$. By setting $C(j,\tilde j):=\int_0^\eps f(y)e^{2\pi
    i(j-\tilde j)y/\eps}\,dy$ we conclude
  \begin{align*}
    \int_{0}^{\eps R} f(y)e^{2\pi i j y/\eps}e^{-2\pi i \tilde j
      y/\eps}\,dy =C(j,\tilde j)\sum_{k=0}^{R-1}\left(e^{2\pi
        i(j-\tilde j)}\right)^k =C(j,\tilde j)\frac{1-e^{2\pi
        i(j-\tilde j)R}}{1-e^{2\pi i(j-\tilde j)}} =0\,.
  \end{align*}
  In the last step we used $j,\tilde j\in Q_R$, which implies $R
  (j-\tilde j) \in\Z$ and $j,\tilde j < 1$, and exploited $j\neq
  \tilde j$.
	
  \smallskip 2.  Let $u, v$ have vertical pre-Bloch expansions
  \begin{align*}
    u(x_1,x_2) =\sum_{j_2\in Q_{R}}\Phi_{j_2}(x_1, x_2)\, e^{2\pi i
      j_2 x_2/\eps}\,,
    \quad v(x_1,x_2) =\sum_{\tilde j_2\in Q_{R}}
    \tilde\Phi_{\tilde j_2}(x_1, x_2)\, e^{2\pi i \tilde j_2 x_2/\eps}\,.
  \end{align*}
  Then the left hand side of \eqref {eq:orthogonalweightproject} reads
  \begin{align*}
    &\int_0^{\eps R} f(y)u(x_1,y)\,\overline{\Pi^{\ver}_{k_2} v(x_1,y)}\,dy\\
    &\quad= \sum_{j_2\in Q_{R}}\int_0^{\eps R}
    f(y)\Phi_{j_2}(x_1,y)\, e^{2\pi i j_2 y/\eps}\,
    \overline{\tilde\Phi_{k_2}(x_1, y)}\, e^{-2\pi i k_2 y/\eps}\,dy\,.
  \end{align*}
  Since the function $f(\cdot)\Phi_{j_2}(x_1,
  \cdot)\overline{\tilde\Phi_{k_2}(x_1, \cdot)}$ is $\eps$-periodic,
  we can apply the orthogonality \eqref {eq:orthweight} of Item 1. The
  sum on the right hand side collapses to $j_2 = k_2$ and we find
  \eqref{eq:orthogonalweightproject}.
\end{proof}

\begin{lemma}[Vertical pre-Bloch projection and gradients]
  \label{lem:Propertiesvertprojection}
  Let $K\in \N$, $h=\eps K$, and $k_2\in Q_{K}$. Let $u\in
  H_\loc^1(\R\times (0,h);\C)$ be periodic in the $x_2$-variable. Then
  the function $\Pi^{\ver}_{k_2}u\in H^1_\loc(\R\times (0,h);\C)$ is
  periodic in $x_2$ and there holds
  \begin{align}
    \label{eq:commuteprojectiongradient}
    \nabla\left(\Pi^{\ver}_{k_2}u\right)
    =\Pi^{\ver}_{k_2}\left(\nabla u\right)\,.
  \end{align}
\end{lemma}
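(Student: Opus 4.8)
The plan is to reduce the statement to the elementary fact that translation in $x_2$ commutes with weak differentiation. The key observation is that the vertical projection $\Pi^{\ver}_{k_2}$ can be rewritten as a finite linear combination of integer shifts of $u$. Writing $u$ in its pre-Bloch form $u = \sum_{j_2\in Q_{K}}\Phi_{j_2}\,e^{2\pi i j_2 x_2/\eps}$ with $\eps$-periodic coefficients $\Phi_{j_2}(x_1,\cdot)$, a shift by $\eps l$ multiplies the $j_2$-mode by $e^{2\pi i j_2 l}$, since $\Phi_{j_2}$ is $\eps$-periodic. Together with the discrete orthogonality $\frac1K\sum_{l=0}^{K-1}e^{2\pi i(j_2-k_2)l} = \delta_{j_2,k_2}$, valid for $j_2,k_2\in Q_{K}$ because then $j_2-k_2\in\frac1K\{-(K-1),\dots,K-1\}$, this yields the representation
\begin{align}
  \Pi^{\ver}_{k_2}u(x_1,x_2) = \frac1K\sum_{l=0}^{K-1}e^{-2\pi i k_2 l}\,u(x_1,x_2+\eps l)\,.
\end{align}
First I would establish this shift formula rigorously; it is the heart of the argument.

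Once the formula is available, the first two assertions follow at once. Interpreting the hypothesis \emph{periodic in the $x_2$-variable} as saying that $u$ extends to an $h$-periodic function lying in $H^1_\loc(\R\times\R)$, each shift $x_2\mapsto x_2+\eps l$ maps this class to itself: the shifted function is again $h$-periodic (because $h=\eps K$ and the shifts are integer multiples of $\eps$) and again in $H^1_\loc$, with weak gradient equal to the shift of $\nabla u$. A finite linear combination of such shifts is therefore in $H^1_\loc$ and $h$-periodic, so $\Pi^{\ver}_{k_2}u\in H^1_\loc(\R\times(0,h);\C)$ is periodic in $x_2$.

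For the commutation \eqref{eq:commuteprojectiongradient} I would use that differentiation commutes with translation in the sense of weak derivatives: denoting by $T^l$ the shift $(T^l u)(x_1,x_2)=u(x_1,x_2+\eps l)$, one has $\nabla(T^l u)=T^l(\nabla u)$ componentwise, since the chain rule for the $x_2$-derivative produces no extra factor and the $x_1$-derivative is untouched by the shift. Applying $\nabla$ to the representation above and using linearity gives
\begin{align}
  \nabla\bigl(\Pi^{\ver}_{k_2}u\bigr)
  = \frac1K\sum_{l=0}^{K-1}e^{-2\pi i k_2 l}\,T^l(\nabla u)
  = \Pi^{\ver}_{k_2}(\nabla u)\,,
\end{align}
where $\Pi^{\ver}_{k_2}$ acts componentwise on the vector field $\nabla u$; this is exactly the claim.

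The main obstacle is the rigorous justification of the shift representation and, relatedly, the correct reading of the periodicity hypothesis ensuring that the translates $T^l u$ remain in $H^1_\loc$ across the identified boundary $x_2=0\sim x_2=h$. Alternatively, if one prefers to avoid the shift formula, the identity \eqref{eq:commuteprojectiongradient} can be obtained by testing the weak definition of the derivative against $\eps$-periodic weights and invoking the self-adjointness of $\Pi^{\ver}_{k_2}$ recorded in Lemma \ref{lem:orthweight}; but the translation argument is the most economical and I would follow it.
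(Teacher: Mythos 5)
Your proof is correct, but it takes a genuinely different route from the paper. The paper works directly with the pre-Bloch expansion: it differentiates $u=\sum_{j_2\in Q_K}\Phi_{j_2}\,e^{2\pi i j_2 x_2/\eps}$ term by term (using that, by periodicity of $u$, each $\Phi_{j_2}$ has $H^1$-regularity), observes that the resulting bracketed coefficients $\nabla\Phi_{j_2}+2\pi i j_2/\eps\,\Phi_{j_2}e_2$ are again $\eps$-periodic, and then invokes \emph{uniqueness} of the pre-Bloch expansion to identify the $k_2$-mode of $\nabla u$ with $\nabla(\Phi_{k_2}e^{2\pi i k_2 x_2/\eps})$. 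You instead prove the averaging identity
\begin{equation*}
  \Pi^{\ver}_{k_2}w(x_1,x_2)=\frac1K\sum_{l=0}^{K-1}e^{-2\pi i k_2 l}\,\tilde w(x_1,x_2+\eps l)
\end{equation*}
(valid, by the same discrete-orthogonality computation, for \emph{any} $w\in L^2_\loc$ with $h$-periodic extension $\tilde w$ — it is worth stating it in this generality, since you apply it both to $u$ and, componentwise, to $\nabla u$), and then reduce everything to the fact that translation commutes with weak differentiation. Both arguments are sound; what yours buys is that the regularity assertions come for free — $\Pi^{\ver}_{k_2}u\in H^1_\loc$ and its $x_2$-periodicity are immediate from the representation as a finite combination of translates, whereas the paper's claim that each $\Phi_{j_2}$ inherits $H^1$-regularity from $u$ is asserted rather than proved (your shift formula is in fact the natural way to justify it). Your approach also makes transparent exactly where the periodicity hypothesis enters: without it, the translates would carry the boundary mismatch at $x_2=0\sim x_2=h$ into the interior of the strip and the projection would fail to be $H^1_\loc$ for $K>1$. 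What the paper's route buys is brevity and uniformity with the rest of its machinery, since uniqueness of the pre-Bloch expansion is already available and reused elsewhere (e.g.\ in Remark \ref{rem:Blochdifferentperiod}).
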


\begin{proof}
  Let $u$ have the pre-Bloch expansion $u(x_1,x_2)=\sum_{j_2\in
    Q_{K}}\Phi_{j_2}(x_1, x_2)\, e^{2\pi i j_2 x_2/\eps}$.  Due to the
  periodicity of $u$ in the $x_2$-variable, each $\Phi_{j_2}$ in the
  above (finite) sum has $H^1$-regularity, and thus
  \begin{align}
    \begin{split}
      \label{eq:expansiongradu}
      \nabla u(x_1,x_2)&=\sum_{j_2\in Q_{K}}\nabla\left(\Phi_{j_2}(x_1, x_2)\, 
        e^{2\pi i j_2 x_2/\eps}\right)\\
      &=\sum_{j_2\in Q_{K}}\left[\nabla\Phi_{j_2}(x_1, x_2)+2\pi i
        j_2/\eps\,\Phi_{j_2}(x_1, x_2)e_2\right] e^{2\pi i j_2
        x_2/\eps}\,,
    \end{split}
  \end{align}
  where $e_2 = (0,1) \in\R^2$ denotes the second unit vector. Since
  the expression in the squared brackets is $\eps$-periodic,
  \eqref{eq:expansiongradu} is an expansion of $\nabla u$; uniqueness
  of the pre-Bloch expansion implies
  \begin{align*}
    \Pi^{\ver}_{k_2}\left(\nabla u\right)(x_1,x_2)&=
    \left(\nabla\Phi_{k_2}(x_1, x_2)+2\pi i k_2/\eps\,\Phi_{k_2}(x_1,
      x_2)e_2 \right)
    e^{2\pi i k_2 x_2/\eps}\\
    &=\nabla\left(\Phi_{k_2}(x_1, x_2)\, e^{2\pi i k_2
        x_2/\eps}\right)
    =\nabla\left(\Pi^{\ver}_{k_2}u\right)(x_1,x_2)\,,
  \end{align*}
  which proves \eqref{eq:commuteprojectiongradient}.
\end{proof}

\begin{lemma}[Caccioppoli estimate] 
  \label{lem:Caccioppoli}
  Let $u\in L^2_\loc(\R\times (0,h))$ be a vertically periodic
  solution of the Helmholtz equation $\mathcal{L}_0u=\omega^2 u$. Let
  $u$ satisfy the uniform $L^2$-bounds of Definition
  \ref{def:outgoingwave}. Then there holds
  \begin{align}
    \label{eq:caccioppoli}
    \frac{1}{R}\int_{W_R\setminus W_{R-1}}|u_R^\pm|^2 + |\nabla
    u_R^\pm|^2\leq C\quad\text{and}\quad \meanint_{W_R}|u_R^\pm|^2 +
    |\nabla u_R^\pm|^2\leq C
  \end{align}
  with $C$ independent of $R$.
\end{lemma}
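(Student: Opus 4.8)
The plan is to separate the estimate into its $L^2$-part, which is elementary, and its gradient part, which is a Caccioppoli (interior energy) estimate. Throughout I would work with the periodically extended function $\tilde u$ and exploit that $u_R^\pm$ is merely a horizontal translate of $\tilde u$: explicitly, $u_R^+$ takes the values of $\tilde u$ on $(R\eps,2R\eps)\times(0,R\eps)$, a region sitting at distance $\ge R\eps$ to the right of the interface, while $u_R^-$ takes the values of $\tilde u$ on $(-2R\eps,-R\eps)\times(0,R\eps)\subset\{x_1<0\}$, where $a^\eps\equiv 1$. Hence on the relevant regions $\tilde u$ is a genuine solution of $\calL_0\tilde u=\omega^2\tilde u$ (with the $\eps$-periodic coefficient on the right, with coefficient $1$ on the left), the discontinuity of $a$ at the interface plays no role, and all constants below may depend on the fixed data $\eps,h,\omega,a_*,a^*$ but not on $R$.

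For the $L^2$-part I would use only the uniform bound $\int_0^h\int_L^{L+1}|u|^2\le C_0$ of Definition \ref{def:outgoingwave} together with $h$-periodicity of $\tilde u$ in $x_2$. Since $R\in\N K$, each vertical slab of height $R\eps=(R/K)h$ consists of $R/K$ full periods, so $\int_0^{R\eps}\int_L^{L+1}|\tilde u|^2\le (R/K)C_0$; summing over the $O(R\eps)$ unit $x_1$-intervals covering $(R\eps,2R\eps)$ gives $\int_{W_R}|u_R^+|^2\le C R^2$, whence $\meanint_{W_R}|u_R^+|^2\le C$. The boundary-strip version is the same count restricted to a frame of width $\eps$, which meets only $O(R)$ unit cells, so $\int_{W_R\setminus W_{R-1}}|u_R^+|^2\le CR$ and division by $R$ yields the claim.

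For the gradient part the plan is the standard Caccioppoli estimate: testing the weak form of $\calL_0 u=\omega^2 u$ with $\zeta^2\bar u$ for a smooth cut-off $\zeta$, expanding $\nabla(\zeta^2\bar u)=\zeta^2\nabla\bar u+2\zeta\bar u\,\nabla\zeta$, taking real parts, and absorbing the cross term with Young's inequality (using $0<a_*\le a^\eps\le a^*$) yields the local energy estimate
\begin{align*}
  \int \zeta^2|\nabla u|^2 \le C\left(\int\zeta^2|u|^2 + \int|\nabla\zeta|^2|u|^2\right)\,.
\end{align*}
I would then choose $\zeta$ adapted to each region. For the average bound I take $\zeta\equiv 1$ on the region corresponding to $W_R$, supported in its $\eps$-neighbourhood, with $\|\nabla\zeta\|_\infty\le C$; this bounds $\int_{W_R}|\nabla u_R^+|^2$ by $C$ times the $L^2$-mass of $\tilde u$ on a set of measure $O(R^2)$, which is $O(R^2)$ by the $L^2$-part, giving $\meanint_{W_R}|\nabla u_R^+|^2\le C$. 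For the boundary-strip bound I take $\zeta\equiv 1$ on $W_R\setminus W_{R-1}$ and supported in an $\eps$-enlargement of that frame, again with $\|\nabla\zeta\|_\infty\le C$; now the right-hand side involves the $L^2$-mass of $\tilde u$ on a frame of measure $O(R)$, hence is $O(R)$, and division by $R$ closes the estimate. The case ``$-$'' is identical with $a^\eps$ replaced by $1$.

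The step I expect to be the only real obstacle is ensuring that the Caccioppoli constant is genuinely $R$-independent and that the strip estimate \emph{gains} the factor $R$. The first point is handled by taking all transition layers of fixed width comparable to $\eps$ (so $\|\nabla\zeta\|_\infty$ is controlled independently of $R$) and by invoking only the fixed ellipticity bounds $a_*,a^*$. The second point is the reason one cannot simply bound the strip integral by the full-cube integral: the gain comes precisely from localizing $\zeta$ to a frame of measure $O(R)$ rather than to all of $W_R$, so that the right-hand side of the energy estimate is controlled by the $L^2$-mass on that thin frame. The remaining work is the routine translation of the slab-wise uniform $L^2$-bound into mass bounds on cubes and frames via vertical periodicity, as sketched above.
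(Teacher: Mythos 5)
Your proof is correct and rests on the same two pillars as the paper's: the standard Caccioppoli estimate obtained by testing the equation with $\zeta^2\bar u$, and a counting argument that converts the uniform slab bounds $\int_0^h\int_L^{L+1}|u|^2\le C_0$ of Definition \ref{def:outgoingwave} into $L^2$-mass bounds on squares and frames via vertical periodicity. The only real difference is how the localization is organized. The paper takes cut-offs depending on $x_1$ alone, equal to $1$ on $(L,L+1)\times(0,h)$ and supported in $(L-1,L+2)\times(0,h)$, so that vertical periodicity removes the boundary terms without ever extending $u$; it obtains a gradient bound per unit slab and then sums over the $O(R^2)$ resp.\ $O(R)$ slabs or slab-pieces meeting the square resp.\ the frame. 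You instead use genuinely two-dimensional cut-offs (an $\eps$-collar around the square, and an $\eps$-collar around the frame $W_R\setminus W_{R-1}$) applied to the periodic extension $\tilde u$; this requires the standard remark, which you should state, that $\tilde u$ is a weak solution on the vertically extended domain (true because $h=K\eps$ is an integer multiple of the coefficient's period, so the extension of $u$ and of $a^\eps$ are compatible). Both routes isolate the same key point, which you identify explicitly: the gain of the factor $R$ in the strip estimate comes from the fact that the Caccioppoli right-hand side lives only on an $O(R)$-measure neighbourhood of the frame, rather than on all of $W_R$.
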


\begin{proof}
  The proof is, up to translations and a summation, analogous to the
  proof of the standard Caccioppoli estimate: On a rectangle $(L-1,
  L+2)\times (0,h)$ we use a cut-off function $\theta$ with compact
  support that depends only on $x_1$ and which is identical $1$ on
  $(L, L+1)\times (0,h)$. Testing the equation with $\theta^2 \bar u$
  provides
  \begin{align*}
    \int_{L-1}^{L+2}\int_0^h \omega^2 |u|^2 \theta^2 
    = \int_{L-1}^{L+2}\int_0^h \calL_0 u (\theta^2 \bar u)
    = \int_{L-1}^{L+2}\int_0^h \left\{ a^\eps |\nabla u|^2 \theta^2 
    +2 a^\eps (\nabla u \theta)\cdot (\nabla \theta \bar u)\right\}\,.
  \end{align*}
  The Cauchy-Schwarz inquality is used to treat the last term, the
  first factor is absorbed with Young's inquality in the gradient
  term, the other consists (up to bounded factors) only of the
  $L^2$-norm of $u$. We conclude that a bound for the $L^2$-norm on
  $(L-1, L+2)\times (0,h)$ implies a bound for the $L^2$-norm of the
  gradient on $(L, L+1)\times (0,h)$. A summation over many squares
  yields the result.
\end{proof}

\begin{lemma}[Regularity of eigenvalue projections $\Pi^{{\ev}}$]
  \label{lem:RegularityProjectionsev}
  Let $(v_R)_{\R\in\N}$ be a sequence of functions with
  $H^2$-regularity and vanishing boundary data, i.e.  $v_R\in
  H_0^2(W_R;\C)$. We assume that
  \begin{align}
    \label{eq:estimatesuassumption}
    \meanint_{W_R}\left|v_R\right|^2 +\left|\nabla v_R\right|^2+
    \left|\mathcal{L}_0(v_R)\right|^2\leq C_0
  \end{align}
  holds for $\mathcal{L}_0=-\nabla\cdot(a^\eps\,\nabla)$ with some
  $R$-independent constant $C_0$. 
  \begin{enumerate}
  \item Let $\Pi$ be any of the projections of Definition
    \ref{def:Projections}.  Then there exists an $R$-independent
    constant $C$ such that
    \begin{align}
      \label{eq:regularityestimate1}
      \meanint_{W_R}\left|\nabla\left(\Pi^{\ev,\pm}_{m=0}
          v_R\right)\right|^2 +
      \left|\nabla\left(\Pi^{\ev,\pm}_{m\geq 1}
          v_R\right)\right|^2
      +\left|\nabla\left(\Pi\left(\Pi^{\ev,\pm}_{m=0}
            v_R\right)\right)\right|^2 \leq C\,.
    \end{align}
  \item If, additionally, $\textmean_{W_R}\left|\Pi^{\ev,\pm}_{m\geq
        1} v_R\right|^2\rightarrow 0$ as $R\rightarrow\infty$, then
    there holds
    \begin{align}
      \label{eq:regularityestimate2}
      \meanint_{W_R}\left|\nabla\left(\Pi^{\ev,\pm}_{m\geq 1}
          v_R\right)\right|^2\rightarrow 0\quad\text{ as
      }R\rightarrow\infty\,.
    \end{align}
  \end{enumerate}
\end{lemma}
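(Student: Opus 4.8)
The plan is to push everything through the discrete Bloch expansion of $v_R$ and to rely on a single master identity: for any $w=\sum_{\lambda\in I_R}\beta_\lambda U^\pm_\lambda$ one has
\[
  \meanint_{W_R} a^\eps\,|\nabla w|^2 = \sum_{\lambda\in I_R}\mu_\lambda\,|\beta_\lambda|^2. \qquad (\star)
\]
To obtain $(\star)$ I would first observe that each $U^\pm_\lambda$ with $\lambda=(j,m)$ and $j\in Q_R\times Q_R$ is genuinely $W_R$-periodic: the phase $e^{2\pi i\theta(\lambda)\cdot x/\eps}$ picks up the trivial factor $e^{2\pi i jR}=1$ across the full box since $jR\in\Z^2$. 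Hence integration by parts on $W_R$ produces no boundary terms, and the eigenvalue relation $\calL_0 U^\pm_\lambda=\mu_\lambda U^\pm_\lambda$ together with the $L^2$-orthonormality of the $U^\pm_\lambda$ gives $\meanint_{W_R}a^\eps\nabla U^\pm_\lambda\cdot\overline{\nabla U^\pm_{\tilde\lambda}}=\mu_\lambda\,\delta_{\lambda\tilde\lambda}$; expanding $|\nabla w|^2$ and inserting this relation yields $(\star)$. For the infinite sums occurring in the $m\ge1$ case the identity is justified by noting that $\{\sqrt{a^\eps}\,\nabla U^\pm_\lambda/\sqrt{\mu_\lambda}\}$ is an orthonormal system (here $\mu_\lambda>0$ for $m\ge1$), so the series for $\sqrt{a^\eps}\,\nabla w$ converges in $L^2$ precisely when $\sum\mu_\lambda|\beta_\lambda|^2<\infty$, with squared norm equal to that sum.

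Next I would record the two coefficient estimates coming from the hypothesis \eqref{eq:estimatesuassumption}. Writing $v_R=\sum_{\lambda\in I_R}\alpha_\lambda U^\pm_\lambda$ with $\alpha_\lambda=\la v_R,U^\pm_\lambda\ra_R$, the $L^2$-bound gives $\sum_\lambda|\alpha_\lambda|^2=\meanint_{W_R}|v_R|^2\le C_0$. Since $v_R\in H_0^2(W_R;\C)$, both $v_R$ and $\nabla v_R$ vanish on $\partial W_R$, so the double integration by parts $\la\calL_0 v_R,U^\pm_\lambda\ra_R=\la v_R,\calL_0 U^\pm_\lambda\ra_R=\mu_\lambda\alpha_\lambda$ carries no boundary contribution (this is exactly the mechanism of \eqref{eq:L0-applied-to-w}). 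Consequently the Bloch coefficients of $\calL_0 v_R$ are $\mu_\lambda\alpha_\lambda$, whence $\sum_\lambda\mu_\lambda^2|\alpha_\lambda|^2=\meanint_{W_R}|\calL_0 v_R|^2\le C_0$.

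For part (1) I would argue separately in $m=0$ and $m\ge1$. For the $m=0$ contributions I use that $j\mapsto\mu_0^\pm(j)$ is continuous on the compact set $Z$, hence bounded by some $\bar\mu_0<\infty$; applying $(\star)$ to $\Pi^{\ev,\pm}_{m=0}v_R$ and to $\Pi(\Pi^{\ev,\pm}_{m=0}v_R)$, both of which retain only $m=0$ modes (a subset, since every $\Pi$ of Definition \ref{def:Projections} selects indices), the right-hand side of $(\star)$ is $\sum_{(j,0)\in S}\mu_0^\pm(j)|\alpha_\lambda|^2\le\bar\mu_0\,C_0$, and dividing by $a_*$ via $a^\eps\ge a_*$ gives the claimed bound. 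For the $m\ge1$ contributions the eigenvalues are unbounded, so instead of a crude estimate I apply Cauchy--Schwarz in $\lambda$,
\[
  \sum_{m\ge1}\mu_m(j)\,|\alpha_\lambda|^2
  \le\Big(\sum_{m\ge1}\mu_m(j)^2|\alpha_\lambda|^2\Big)^{1/2}
      \Big(\sum_{m\ge1}|\alpha_\lambda|^2\Big)^{1/2}\le C_0,
\]
the two factors being controlled by the $\calL_0$-bound and the $L^2$-bound respectively; dividing by $a_*$ and invoking $(\star)$ finishes part (1).

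Part (2) reuses the very same Cauchy--Schwarz step, now reading the second factor as $\big(\meanint_{W_R}|\Pi^{\ev,\pm}_{m\ge1}v_R|^2\big)^{1/2}\to0$ by the additional hypothesis, while the first factor stays bounded by $\sqrt{C_0}$; through $(\star)$ this forces $\meanint_{W_R}|\nabla(\Pi^{\ev,\pm}_{m\ge1}v_R)|^2\le a_*^{-1}\sum_{m\ge1}\mu_m(j)|\alpha_\lambda|^2\to0$. The case of the superscript ``$-$'' is identical, using that $\mu_0^-(j)=4\pi^2|j|^2/\eps^2$ is likewise bounded on $Z$. I expect the only genuinely delicate point to be the rigorous justification of the master identity $(\star)$ for the infinite $m\ge1$ series, namely the $L^2$-convergence of $\nabla(\Pi^{\ev,\pm}_{m\ge1}v_R)$ and the vanishing of boundary terms under the $H_0^2$-assumption; once $(\star)$ is secured, the remainder is elementary summation together with two applications of Cauchy--Schwarz.
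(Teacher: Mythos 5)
Your proof is correct, and at bottom it rests on the same two pillars as the paper's own argument: (i) every $U^\pm_\lambda$ with $\lambda\in I_R$ is $W_R$-periodic (your observation $jR\in\Z^2$), so integration by parts over $W_R$ carries no boundary terms, and (ii) the hypothesis $v_R\in H_0^2(W_R;\C)$ guarantees $\la \calL_0 v_R,U^\pm_\lambda\ra_R=\mu_\lambda\la v_R,U^\pm_\lambda\ra_R$, which is exactly the mechanism \eqref{eq:L0-applied-to-w}. The difference is where the work is done. The paper stays in physical space: it integrates the Dirichlet form of the periodic finite sum $\Pi^{\ev,\pm}_{m=0}v_R$ by parts, commutes $\calL_0$ with the projection, and closes with Cauchy--Schwarz in $L^2(W_R)$ plus norm-boundedness of projections; the $m\ge 1$ part of \eqref{eq:regularityestimate1} is then obtained indirectly, from $\Pi^{\ev,\pm}_{m\ge 1}v_R=v_R-\Pi^{\ev,\pm}_{m=0}v_R$ together with the assumed bound on $\nabla v_R$. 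You instead diagonalize the Dirichlet form once and for all through the Parseval identity $(\star)$ and run everything as coefficient estimates. This buys a unified and in one respect stronger treatment: parts (1) and (2) both reduce to pairing $\sum_\lambda\mu_\lambda^2|\alpha_\lambda|^2\le C_0$ against an $\ell^2$ factor, your $m\ge 1$ bound in part (1) is direct, and you never use the hypothesis on $\meanint_{W_R}|\nabla v_R|^2$ at all. The price is that you must justify two points the paper sidesteps: the $L^2$-convergence of the term-by-term differentiated series for the infinite $m\ge 1$ sum (which you handle correctly via the orthonormal system $\sqrt{a^\eps}\,\nabla U^\pm_\lambda/\sqrt{\mu_\lambda}$, using $\mu_\lambda>0$ for $m\ge 1$), and the boundedness of the lowest band, $\sup_{j\in Z}\mu^\pm_0(j)<\infty$, needed for your $m=0$ estimate; the latter is standard (continuity of $\mu_0$ on the compact set $Z$, or simply testing the Rayleigh quotient of $\calL^\pm_j$ with $\psi\equiv 1$, which gives $\mu^\pm_0(j)\le 8\pi^2 a^*/\eps^2$), but the paper's projection-boundedness argument avoids any information about the band functions.
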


\begin{proof}
  1.  We omit the superscripts $\pm$. Concerning
  \eqref{eq:regularityestimate1} we note that, because of
  $\Pi^{{\ev}}_{m\geq 1} v_R=v_R-\Pi^{{\ev}}_{m=0} v_R$, the estimate
  for $\Pi^{{\ev}}_{m\geq 1} v_R$ follows directly from the estimate
  for $\Pi^{{\ev}}_{m=0} v_R$ and Assumption
  \eqref{eq:estimatesuassumption}.

  Since $\Pi^{{\ev}}_{m=0} v_R=\sum_{\lambda=(j,0)\in
    I_R}\alpha_\lambda U_\lambda$ is a finite sum of periodic
  functions, we find that $\Pi^{{\ev}}_{m=0} v_R$ is periodic in
  $W_R$.  This allows to calculate, with $0<a_*\leq \inf a^\eps$,
  \begin{align*}
    &a_* \meanint_{W_R}\left|\nabla\left(\Pi^{{\ev}}_{m=0}
        v_R\right)\right|^2
    \leq \meanint_{W_R}a^\eps \nabla\left(\Pi^{{\ev}}_{m=0} v_R\right)\cdot \overline{\nabla\left(\Pi^{{\ev}}_{m=0} v_R\right)}\\
    &\quad\stackrel{(1)}{=}
    \meanint_{W_R}\mathcal{L}_0\left(\Pi^{{\ev}}_{m=0}
      v_R\right)\,\overline{\Pi^{{\ev}}_{m=0} v_R}
    \stackrel{(2)}{=} \meanint_{W_R}\Pi^{{\ev}}_{m=0}\left(\mathcal{L}_0 v_R\right)\,\overline{\Pi^{{\ev}}_{m=0} v_R}\\
    &\quad\leq
    \left(\meanint_{W_R}\left|\Pi^{{\ev}}_{m=0}\left(\mathcal{L}_0
          v_R\right)\right|^2\right)^{1/2}
    \left(\meanint_{W_R}\left|\Pi^{{\ev}}_{m=0} v_R\right|^2\right)^{1/2}\\
    &\quad\leq \left(\meanint_{W_R}\left|\mathcal{L}_0
        v_R\right|^2\right)^{1/2}
    \left(\meanint_{W_R}\left|v_R\right|^2\right)^{1/2}\leq C_0\,.
  \end{align*}
  In (1) we exploited the periodicity of $\Pi^{{\ev}}_{m=0} v_R$ to
  perform integration by parts without boundary terms.  In (2), we
  used the periodicity of $v_R$, which yields
  $\mathcal{L}_0\left(\Pi^{{\ev}}_{m=0} v_R\right) =
  \Pi^{{\ev}}_{m=0}\left(\mathcal{L}_0 v_R\right)$, as shown in
  \eqref{eq:L0-applied-to-w}.  In the last line we exploited the
  norm-boundedness of projections.  The claim for
  $\Pi\left(\Pi^{\ev}_{m=0} v_R\right)$ is shown analogously, using
  again periodicity.  This concludes the proof of Relation
  \eqref{eq:regularityestimate1}.

  \smallskip 2.  The proof of Relation \eqref{eq:regularityestimate2}
  is similar and can be interpreted as an interpolation between
  function spaces. Once more, we exploit that $v_R$ has vanishing (and
  thus periodic) boundary data and that $\Pi^{{\ev}}_{m=0} v_R$ is
  periodic as a finite sum (see Item 1.). Therefore also the
  difference $\Pi^{{\ev}}_{m\geq 1} v_R=v_R-\Pi^{{\ev}}_{m=0} v_R$ is
  periodic. Arguing as above we obtain, as $R\rightarrow \infty$,
  \begin{align}
    \label{eq:estgradientmgeq1}
    a_*\, \meanint_{W_R}\left|\nabla\left(\Pi^{{\ev}}_{m\geq 1}
        v_R\right)\right|^2 \leq \left(\meanint_{W_R}\left|\mathcal{L}_0
        v_R\right|^2\right)^{1/2}
    \left(\meanint_{W_R}\left|\Pi^{{\ev}}_{m\geq
          1}v_R\right|^2\right)^{1/2}\rightarrow 0\,.
  \end{align} 
  This shows \eqref {eq:regularityestimate2} and concludes the proof.
\end{proof}

\bibliographystyle{abbrv} 
\bibliography{lit-Helmholtz-3}

\end{document}